\newcommand{\ip}[1]{\mathopen{\langle}#1\mathclose{\rangle}}
\newtheorem{Thm}{Theorem}[section]
\newtheorem{Prop}[Thm]{Proposition}
\newtheorem{Lem}[Thm]{Lemma}
\newtheorem*{claim}{Claim}
\newtheorem{Cor}[Thm]{Corollary}
\theoremstyle{definition}
\newtheorem{Rem}[Thm]{Remark}
\newtheorem{Def}[Thm]{Definition}
\newcommand{\Cs}{C$^\ast$}
\newcommand{\Ws}{W$^\ast$}
\newcommand{\sd}{^{\ast\ast}}
\newcommand{\id}{\mbox{\rm id}}
\newcommand{\rc}{\mathop{\rtimes _{\mathrm r}}}
\newcommand{\rca}[1]{\mathop{\rtimes _{{\mathrm r}, #1}}}
\newcommand{\KK}{\mathrm{KK}}
\newcommand{\IB}{\mathbb B}
\newcommand{\IC}{\mathbb C}
\newcommand{\IK}{\mathbb K}
\newcommand{\IM}{\mathbb M}
\newcommand{\IN}{\mathbb N}
\newcommand{\IT}{\mathbb T}
\newcommand{\IZ}{\mathbb Z}
\newcommand{\cH}{\mathcal H}
\newcommand{\cK}{\mathcal K}
\newcommand{\cE}{\mathcal E}
\newcommand{\cZ}{\mathcal Z}
\newcommand{\cM}{\mathcal M}
\newcommand{\ve}{\varepsilon}
\newcommand{\vp}{\varphi}
\newcommand{\btimes}{\mathbin{\bar{\otimes}}}
\newcommand{\rQ}{\mathrm{Q}}
\newcommand{\acts}{\curvearrowright}
\DeclareMathOperator{\Ad}{Ad}
\DeclareMathOperator{\spa}{\mathop{\overline{span}}}
\DeclareMathOperator{\supp}{supp}
\DeclareMathOperator{\SL}{SL}
\DeclareMathOperator{\SO}{SO}
\DeclareMathOperator{\bigfp}{\lower0.25ex\hbox{\LARGE $\ast$}}
\newcommand{\ad}{\mathop{\rm Ad}}
\title[Amenable \Cs-dynamical systems]
{On characterizations of amenable \Cs-dynamical systems and new examples}
\author{Narutaka Ozawa}
\author{Yuhei Suzuki}
\subjclass[2020]{Primary~
46L55, Secondary~46L05}
\keywords{\Cs-dynamical systems, non-commutative amenable actions}
\address{RIMS, Kyoto University, 606-8502 Kyoto, Japan}
\email{narutaka@kurims.kyoto-u.ac.jp}
\address{Department of Mathematics, Faculty of Science, Hokkaido University,
Kita 10, Nishi 8, Kita-Ku, Sapporo, Hokkaido, 060-0810, Japan}
\email{yuhei@math.sci.hokudai.ac.jp}
\begin{document}
\maketitle
\begin{abstract}
We show the equivalence of several amenability type conditions for \Cs-dynamical systems.
As an application, based on the Pimsner--Meyer construction,
we give a new method to produce
amenable actions on simple \Cs-algebras.
\end{abstract}

\tableofcontents
\section{Introduction}

The notion of amenability for non-singular actions of locally compact groups 
on measure spaces was introduced by Zimmer \cite{Zim}. Later the notion was extended 
for actions on von Neumann algebras by Anantharaman-Delaroche \cite{AD79}. 
The study of the topological counterpart, namely, the notion of amenability for 
\Cs-dynamical systems, was initiated by Anantharaman-Delaroche \cite{AD}.
This has seen a great success, particularly in the case of actions 
on commutative \Cs-alge\-bras, in connection with the study of 
exactness \cite{HR}, \cite{KW}, \cite{Oz}.
For some significant applications of amenable actions, see \cite{Hig}, \cite{Tu}, and Chapter 15 of \cite{BO} for instance.

Recently there is a resurgence of interest in the study of amenability 
for group actions on \emph{non\-commu\-ta\-tive} \Cs-algebras \cite{BC}, \cite{BEW}, \cite{BEW2}, \cite{EN}, \cite{Suz20c}.
This is due to emergence of new interesting examples and phenomena of amenable \Cs-dynamical systems \cite{Suzeq}, \cite{Suz20c}.
After the seminal work \cite{AD}, a number of definitions have been proposed for amenability type conditions for 
\Cs-dynamical systems (see e.g., \cite{AD02}, \cite{Ex}, \cite{EN}, \cite{BEW}, \cite{BEW2}, \cite{BC}).
Recently many of them have been shown to be equivalent in some cases, 
by Buss--Echterhoff--Willett \cite{BEW2},
and by Bearden--Crann \cite{BC}. 
For details, we refer the reader to \cite{BEW2}, \cite{BC}, and references therein.

The purpose of this article is two fold. 
First, we prove further equivalence among amenability type conditions 
for \Cs-dynamical systems. 
Notably, we establish the equivalence of the quasi central approximation property \cite{BEW},
the Exel--Ng approximation property \cite{EN}, and (von Neumann) amenability \cite{BEW2}, \cite{BC} in full generality.
This closes the circle of implications that were left open (see Section 8.1 of \cite{BEW2} for details).
We note that this is already proved in \cite{BEW2} for \emph{discrete} and \emph{unital} \Cs-dynamical systems. 
However, we emphasize that even if one is interested only in discrete and unital \Cs-dyna\-mical 
systems, there is a need to develop the theory for 
locally compact group actions on non-unital \Cs-alge\-bras. 
The need originates from our second main result, a construction of 
new examples of amenable \Cs-dynamical systems (Section~\ref{Sec:Example}).
Unlike the previously known ad hoc constructions \cite{Suzeq}, \cite{Suz20c},
our new construction is \emph{functorial}. More precisely,
we show that the equivariant Pimsner construction \cite{Pim97}, recently studied by Meyer \cite{Mey},
preserves amenability.
This provides plenty of new interesting examples of amenable actions on \emph{simple} \Cs-algebras.
The proof of the amenability of the Pimsner--Meyer system
$G\acts B$ is built on the analysis of the fixed point algebra $B^{\IT}$ of the gauge action by the circle group $\IT$.
This involves the study of associated \Cs-dyna\-mical systems 
$G\times\IT\acts B$, $G\acts B\rtimes\IT$, and their subsystems, which are no longer discrete and unital.
(For details, see Section \ref{Sec:fixed point}.)

\section{Preliminaries}

We fix notations and collect some results from the literature.

\subsection{Locally compact groups}
Throughout the article, the symbol $G$ stands for a locally compact (Hausdorff) group.
For basic facts on locally compact groups, we refer the reader to the book \cite{Fol}.
Unless otherwise specified, we do not assume any conditions on $G$.
Denote by $e\in G$ the unit element.
Denote by $m$ the Haar measure on $G$.
Denote by $\Delta \colon G \rightarrow \mathbb{R}_{>0}$ the modular function of $G$.
Put \[P_c(G) := \left\{ f\in C_c(G): f \geq 0, \int_G f\,dm =1 \right\}.\]

\subsection{\Cs- and \Ws-dynamical systems}
A $G$-\Cs-algebra is a pair $(A, \alpha)$
of a \Cs-algebra $A$ and an action $\alpha \colon G \curvearrowright A$
which is point-norm continuous.
A $G$-\Ws-algebra is a pair $(M, \alpha)$
of a von Neumann algebra $M$ and an action $\alpha \colon G \curvearrowright M$
whose induced action $G \curvearrowright M_\ast$ is point-norm continuous.
Such pairs are also referred to as a \Cs-dynamical system, a \Ws-dynamical system, respectively.
A map between \Cs- or \Ws-dynamical systems is called a $G$-map when it is $G$-equivariant. For instance, a $G$-equivariant unital completely positive (u.c.p.) map
is called a $G$-u.c.p.~ map.

For a $G$-\Ws-algebra $(M, \alpha)$,
we equip $L^\infty(G) \btimes M$ with
the diagonal action $\tilde{\alpha}$ of the left translation action and $\alpha$.
We identify $M$ with the $G$-\Ws-subalgebra $\IC 1_{L^\infty(G)} \otimes M$ of $L^\infty(G) \btimes M$ in the obvious way.

Let $(A, \alpha)$ be a $G$-\Cs-algebra.
A function $k \colon G \rightarrow A$ is said to be
of \emph{positive type}
if for any finite subset $F \subset G$,
the matrix
\[[\alpha_s(k(s^{-1} t))]_{s, t\in F} \in \IM_F(A)\]
is positive.

\subsection{The $G$-\Cs-correspondence $L^2(G, A)$}
For a $G$-\Cs-algebra $(A, \alpha)$,
the associated $G$-\Cs-correspondence $L^2(G, A)$ over $A$ is fundamental
to formulate amenable actions. We first recall the definition.
Let $C_c(G, A)$ denote the set of all compactly supported continuous $A$-valued functions on $G$.
We equip $C_c(G, A)$ with the $A$-bimodule structure given by
\[(a \xi)(g):= a\cdot \xi(g), \quad (\xi a)(g):= \xi(g) a \quad{\rm~for~}a\in A,~ \xi\in C_c(G, A),~ g\in G.\]
On $C_c(G, A)$, define the $A$-valued inner product $\langle~, ~\rangle \colon C_c(G, A) \times C_c(G, A) \rightarrow A$
by
\[\langle \xi, \eta \rangle:= \int_G \xi(g)^\ast \eta(g) \,dm(g).\]
Define $\tilde{\alpha} \colon G \curvearrowright C_c(G, A)$ by 
\[[\tilde{\alpha}_g(\xi)](h):= \alpha_g(\xi(g^{-1}h))\quad {\rm~for~}\xi\in C_c(G, A),~g, h\in G.\]
Now define $L^2(G, A)$ to be the completion of $C_c(G, A)$ with respect to $\langle~, ~\rangle$.
Thus $L^2(G, A)$ is a Banach space with respect to the norm
\[\|\xi\| := \|\langle \xi, \xi \rangle\|^{1/2} \quad{\rm~for~}\xi \in L^2(G, A).\]
The $A$-bimodule structure on $C_c(G, A)$ and $\tilde{\alpha}$
continuously extend to $L^2(G, A)$.
We denote the extended operations and action by the same symbols.
These operations make $L^2(G, A)$ a $G$-\Cs-correspondence over $A$.
That is, $L^2(G, A)$ is a \Cs-correspondence over $A$ satisfying
\[\tilde{\alpha}_g(a \xi)= \alpha_g(a)\tilde{\alpha}_g(\xi),\quad\tilde{\alpha}_g(\xi a)= \tilde{\alpha}_g(\xi)\alpha_g(a),\quad \ip{\tilde{\alpha}_g(\xi), \tilde{\alpha}_g(\eta)} = \alpha_g(\ip{\xi, \eta})\]
for $g\in G$, $a\in A$, $\xi, \eta \in L^2(G, A)$.
Note that $\tilde{\alpha}$ extends to the left $L^1(G)$-action
\[\tilde{\alpha}_f(\xi):=\int_G f(g)\tilde{\alpha}_g(\xi) \,dm(g),\quad f\in L^1(G),~ \xi\in L^2(G, A).\]
When the underlying action $\alpha$ is clear from the context, we also write
\[g \ast \xi := \tilde{\alpha}_g(\xi), \quad f \ast \xi:= \tilde{\alpha}_f (\xi)\quad {\rm~for~} \xi \in L^2(G, A),~ g \in G,~ f\in L^1(G).\]
These notations are compatible with the convolution products on $L^1(G)$.
For $a\in A$ and $\xi \in L^2(G, A)$, put
\[[\xi, a] :=\xi a- a\xi.\]
\subsection{Amenability of \Ws-dynamical systems}
Recently Bearden--Crann \cite{BC} established the following useful 
characterizations of amenability of $G$-\Ws-algebras, 
by extending the work of Anantharaman-Delaroche \cite{AD} in the case of discrete groups.
Note that for exact groups, the statement is shown in \cite{BEW2} by a different method.
\begin{Thm}[\cite{BC}, Theorem 1.1]\label{Thm:BC}
Let $\alpha\colon G \curvearrowright M$ be a \Ws-dynamical system. 
The following conditions are equivalent:
\begin{enumerate}[\upshape(1)]
\item The action $\alpha$ is amenable; that is, 
there is a $G$-conditional expectation
\[L^\infty(G) \btimes M \rightarrow M.\]
\item The action $\alpha$ is amenable when restricted to the center $\cZ(M)$ of $M$; that is, 
there is a $G$-conditional expectation
\[L^\infty(G) \btimes \cZ(M) \rightarrow \cZ(M).\]
\item There is a net $(\xi_i)_{i \in I}$ in $C_c(G, \cZ(M))$
satisfying $\langle \xi_i, \xi_i \rangle = 1$ for $i\in I$
and that $\langle \xi_i, \tilde{\alpha}_g(\xi_i) \rangle \rightarrow 1$ uniformly on compact subsets of $G$ in the ultraweak topology. 
\end{enumerate}
\end{Thm}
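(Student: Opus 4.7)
The plan is to prove the cycle (1)$\Rightarrow$(2)$\Rightarrow$(3)$\Rightarrow$(1), since in this order the implications progress from least to most technical content, with (2)$\Rightarrow$(3) being the main obstacle.

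The implication (1)$\Rightarrow$(2) is essentially formal. A $G$-conditional expectation $E \colon L^\infty(G) \btimes M \to M$ is automatically $M$-bimodular by Tomiyama's theorem. Every $x \in L^\infty(G) \btimes \cZ(M)$ commutes with $\IC 1 \otimes M$ inside $L^\infty(G) \btimes M$, so for $m \in M$,
\[
E(x)\,m = E\bigl(x(1\otimes m)\bigr) = E\bigl((1\otimes m)x\bigr) = m\,E(x),
\]
which forces $E(x) \in \cZ(M)$. The restriction $E|_{L^\infty(G) \btimes \cZ(M)}$ is then the required $G$-conditional expectation onto $\cZ(M)$.

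For (3)$\Rightarrow$(1), I would turn each $\xi_i$ into a u.c.p.~map $\Phi_i \colon L^\infty(G) \btimes M \to M$ via
\[
\Phi_i(T) \;=\; \int_G \xi_i(g)^{\ast}\, T(g)\, \xi_i(g)\, dm(g),
\]
viewing $T$ pointwise as an ultraweakly measurable $M$-valued function under the identification $L^\infty(G) \btimes M \cong L^\infty(G,M)$. Since $\xi_i(g) \in \cZ(M)$ and $\langle \xi_i, \xi_i \rangle = 1$, one has $\Phi_i(1 \otimes m) = m$ for all $m \in M$, so every point-ultraweak cluster point $E$ of $(\Phi_i)$ is a conditional expectation onto $M$. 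A Cauchy--Schwarz estimate together with the identity $\|\xi_i - \tilde{\alpha}_g(\xi_i)\|^2 = 2 - 2\,\mathrm{Re}\,\langle \xi_i, \tilde{\alpha}_g(\xi_i) \rangle$ converts the hypothesis $\langle \xi_i, \tilde{\alpha}_g(\xi_i)\rangle \to 1$ (uniformly on compacta, ultraweakly) into asymptotic $G$-equivariance of the $\Phi_i$, so that the cluster point $E$ is $G$-equivariant.

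The main obstacle is (2)$\Rightarrow$(3). After restricting to the abelian algebra $\cZ(M) \cong L^\infty(X,\mu)$, one is working with an amenable W$^\ast$-action on a commutative algebra, and the Anantharaman-Delaroche machinery (adapted to locally compact $G$) produces approximate invariant positive norm-one elements of $L^1(G, \cZ(M))$. The hard part is upgrading these to a net $(\xi_i)$ in $C_c(G, \cZ(M))$ satisfying the precise conditions of (3). I would mollify by convolving with elements of $P_c(G)$ to gain continuity, truncate support against a compactly supported cutoff to land in $C_c$, and then pass to square roots to convert $L^1$-almost-invariance into $L^2$-almost-invariance. The delicate step is the square-root extraction: one needs a Powers--Stormer-type inequality in $\cZ(M)$, which by Gelfand duality reduces to $\|\sqrt{a} - \sqrt{b}\|_2^{\,2} \le \|a - b\|_1$ for $a,b \ge 0$, to control the quantitative loss and deliver the convergence $\langle \xi_i, \tilde{\alpha}_g(\xi_i)\rangle \to 1$ uniform on compacta in the ultraweak topology.
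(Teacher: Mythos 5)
First, note that the paper does not prove this statement at all: it is quoted verbatim from Bearden--Crann (\cite{BC}, Theorem 1.1) and used as a black box, so there is no internal proof to compare against; your proposal has to stand on its own as a proof of the cited theorem. Within it, the implication (1)$\Rightarrow$(2) is correct as written (Tomiyama bimodularity plus the fact that $L^\infty(G)\btimes\cZ(M)$ commutes with $1\otimes M$), and (3)$\Rightarrow$(1) is essentially right, though your key estimate is stated too strongly: $2-2\,\mathrm{Re}\ip{\xi_i,\tilde\alpha_g(\xi_i)}$ is a \emph{positive operator} in $\cZ(M)$ that converges to $0$ only ultraweakly, so $\|\xi_i-\tilde\alpha_g(\xi_i)\|$ does \emph{not} tend to $0$ and the maps $\Phi_i$ are not asymptotically equivariant in norm. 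The argument survives if you test against a fixed normal state $\varphi$: a Cauchy--Schwarz estimate for the form $(\zeta,\eta)\mapsto\varphi(\ip{\zeta,T\eta})$ gives $|\varphi(\alpha_s\Phi_i(\tilde\alpha_s^{-1}T)-\Phi_i(T))|\le 2\|T\|\,\varphi\bigl(\ip{\xi_i-\tilde\alpha_s\xi_i,\ \xi_i-\tilde\alpha_s\xi_i}\bigr)^{1/2}$, which tends to $0$ by hypothesis, and this pointwise-weak asymptotic equivariance is enough for the cluster point (one should also define $\Phi_i$ rigorously via the predual identification $L^1(G,M_*)\cong(L^\infty(G)\btimes M)_*$, as this paper does for the analogous maps $\Theta_{\xi,\eta}$ in Theorem 3.2).

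The genuine gap is in (2)$\Rightarrow$(3). The sentence ``the Anantharaman-Delaroche machinery (adapted to locally compact $G$) produces approximate invariant positive norm-one elements of $L^1(G,\cZ(M))$'' is not an available off-the-shelf step: Anantharaman-Delaroche's arguments are for discrete groups, and the adaptation to general locally compact $G$ is precisely the main content of \cite{BC} (the paper itself emphasizes that before \cite{BC} the statement was known only for exact groups, by a different method). Concretely, what you must produce from a possibly \emph{non-normal} $G$-conditional expectation $P\colon L^\infty(G)\btimes\cZ(M)\to\cZ(M)$ is a net of \emph{normal} densities $f_i\in L^1(G,\cZ(M))^+$ with $\int_G f_i\,dm=1$ whose invariance defect is small in the form $\varphi\bigl(\int_G|f_i-\tilde\alpha_s(f_i)|\,dm\bigr)\to0$ uniformly for $s$ in compacta; the absolute value is essential for the Powers--St{\o}rmer step but is not weakly continuous, so a weak-$\ast$ approximation of $P$ by normal maps followed by a naive Hahn--Banach/Day convexity argument does not directly yield it, and one must also arrange compatibility with the $G$-continuity structure and uniformity on compact sets. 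Your mollification/truncation/square-root endgame is standard and fine (with a renormalization by $(\int_K f_i\,dm)^{-1/2}$ to get $\ip{\xi_i,\xi_i}=1$ exactly), but as written the proposal assumes the hard core of the theorem rather than proving it.
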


\subsection{Exact groups and amenable actions}
Amenability of group actions is first introduced by Zimmer \cite{Zim} in the measurable context.
This is a powerful concept to capture amenable sides of non-amenable groups.
Later its continuous analogue is introduced by Anantharaman-Delaroche.
Here we recall the definition.

Let $\mathrm{Prob}(G) \subset C_0(G)^\ast$ denote the space of regular probability measures on $G$ equipped with the weak-$\ast$ topology.
We equip $\mathrm{Prob}(G)$ with the left translation $G$-action.
\begin{Def}
Let $\alpha \colon G \curvearrowright X$
be a continuous action of $G$ on a locally compact (Hausdorff) space.
The action $\alpha$ is said to be $($topologically$)$ \emph{amenable}
if there is a net
\[m_i \colon X \rightarrow \mathrm{Prob}(G);~i\in I\]
of continuous maps satisfying
$\|m_i(\alpha_g(x)) - (g.m_i)(x)\| \rightarrow 0$ uniformly on compact subsets of $G \times X$.
\end{Def}
We identify the action $G \curvearrowright X$
with the associated action $G \curvearrowright C_0(X)$ via the Gelfand duality.
In particular we use the same symbol for these two actions.

Anantharaman-Delaroche establishes useful characterizations of amenable dynamical systems \cite{AD}, \cite{AD02}.
\begin{Thm}[\cite{AD02}, Proposition 2.5]
Let $\alpha \colon G \curvearrowright X$ be a continuous action on a locally compact space $X$.
The following conditions are equivalent.
\begin{enumerate}[\upshape(1)]
\item The action $\alpha$ is amenable.
\item There is a net $(k_i)_{i\in I} \subset C_c(G, C_0(X))$ of positive type functions
with $k_i(e)\leq 1$
which strictly converges to $1$ uniformly on compact subsets of $G$.
\item There is a net $(\xi_i)_{i\in I}$ in $L^2(G, C_0(X))$ satisfying
$\| \xi_i\| \leq 1$ for $i\in I$
and that
$\langle \xi_i, \tilde{\alpha}_g(\xi_i)\rangle \rightarrow 1$
uniformly on compact subsets of $G$ in the strict topology.
\end{enumerate}
\end{Thm}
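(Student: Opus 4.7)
The plan is to prove the cycle $(1)\Rightarrow(3)\Rightarrow(2)\Rightarrow(1)$. The first two implications are direct constructions; $(2)\Rightarrow(1)$ is the subtle step, requiring a GNS-type inversion.

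For $(1)\Rightarrow(3)$: given continuous maps $m_i\colon X\to\mathrm{Prob}(G)$, first mollify each $m_i$ by right-convolving with an approximate unit in $P_c(G)$ to obtain a continuous $\tilde m_i\colon X\to P_c(G)$ whose continuous densities remain asymptotically translation-invariant in $L^1(G)$-norm. Choose $\chi\in C_c(X)$ with $0\leq\chi\leq 1$ and set
\[\xi_i^\chi(g)(x):=\chi(x)\sqrt{\tilde m_i(x)(g)}\in C_c(G,C_0(X)),\quad\|\xi_i^\chi\|\leq 1.\]
The key input is the elementary Powers--Størmer-type inequality $\|\sqrt f-\sqrt g\|_2^2\leq\|f-g\|_1$ for $f,g\in L^1(G)_+$, which converts the $L^1$-asymptotic invariance of $\tilde m_i$ (with the usual modular corrections) into $\ip{\xi_i^\chi,\tilde\alpha_g(\xi_i^\chi)}\to\chi^2$ uniformly on compacts of $G$. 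Letting $\chi$ run through an approximate unit of $C_0(X)$ yields the required net.

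For $(3)\Rightarrow(2)$: after approximating $\xi_i\in L^2(G,C_0(X))$ by elements of $C_c(G,C_0(X))$, define $k_i(g):=\ip{\xi_i,\tilde\alpha_g(\xi_i)}$. Then $k_i\in C_c(G,C_0(X))$, $k_i(e)=\ip{\xi_i,\xi_i}\leq 1$, and positive type comes from the identity
\[\alpha_s\bigl(k_i(s^{-1}t)\bigr)=\ip{\tilde\alpha_s(\xi_i),\tilde\alpha_t(\xi_i)},\]
exhibiting $[\alpha_s(k_i(s^{-1}t))]_{s,t\in F}$ as the Gram matrix of $(\tilde\alpha_s(\xi_i))_{s\in F}$ in $\IM_F(C_0(X))$. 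Strict convergence $k_i\to 1$ on compacts of $G$ is the hypothesis on $\xi_i$.

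For $(2)\Rightarrow(1)$, the main obstacle: I would invert the construction of $(3)\Rightarrow(2)$ and then read off measures. Given a positive-type $k_i\in C_c(G,C_0(X))$, the equivariant positivity condition makes the sesquilinear form
\[\ip{f_1\otimes a_1,f_2\otimes a_2}_{k_i}:=\iint\overline{f_1(s)}f_2(t)\,a_1^*\,\alpha_s\bigl(k_i(s^{-1}t)\bigr)\,a_2\,dm(s)\,dm(t)\]
on $C_c(G)\otimes C_0(X)$ positive (via Riemann-sum reduction to the finite matrix condition), and a GNS-type completion yields a Hilbert $C_0(X)$-module with a cyclic vector realizing $k_i$. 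The crux, and the main technical content of this implication, is to realize that module concretely inside $L^2(G,C_0(X))$ so as to obtain $\xi_i\in L^2(G,C_0(X))$ with $k_i(g)=\ip{\xi_i,\tilde\alpha_g(\xi_i)}$; commutativity of $C_0(X)$ and compact support of $k_i$ in $G$ are essential here. Once this is achieved, $m_i(x)(E):=\int_E|\xi_i(g)(x)|^2\,dm(g)$ is a sub-probability with total mass $k_i(e)(x)\to 1$ strictly, and a mild renormalization (plus a small perturbation by a fixed element of $\mathrm{Prob}(G)$ to avoid zero denominators) produces continuous $\mathrm{Prob}(G)$-valued maps. The asymptotic equivariance $\|m_i(\alpha_g(x))-g.m_i(x)\|\to 0$ on compacts of $G\times X$ then follows from $\ip{\xi_i,\tilde\alpha_g(\xi_i)}\to 1$ via a change of variable absorbing the modular function.
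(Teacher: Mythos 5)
The paper does not prove this statement at all---it is quoted verbatim from \cite{AD02}, Proposition 2.5---so your argument has to be judged on its own merits rather than against a proof in the text. Your cycle $(1)\Rightarrow(3)\Rightarrow(2)\Rightarrow(1)$ is a sensible architecture, and the first two implications are essentially correct: the mollification-plus-Powers--St{\o}rmer argument for $(1)\Rightarrow(3)$ and the Gram-matrix argument for $(3)\Rightarrow(2)$ (using $\alpha_s(k_i(s^{-1}t))=\ip{\tilde\alpha_s(\xi_i),\tilde\alpha_t(\xi_i)}$) are the standard ones. Two small slips there are fixable: the convolved measures $m_i(x)\ast f$ need not have compactly supported densities, so you land in $L^1(G)_+$ rather than $P_c(G)$ (harmless, since $(3)$ only asks for elements of $L^2(G,C_0(X))$ after truncation); and $\ip{\xi_i^\chi,\tilde\alpha_g(\xi_i^\chi)}$ converges to $\chi\cdot(\chi\circ\alpha_g^{-1})$, not to $\chi^2$, so to get strict convergence uniformly for $g$ in a compact $K$ against a given $a\in C_c(X)$ you must choose $\chi\equiv 1$ on the compact set $\mathrm{supp}(a)\cup K^{-1}\cdot\mathrm{supp}(a)$.

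The genuine gap is in $(2)\Rightarrow(1)$. You construct the GNS Hilbert $C_0(X)$-module of $k_i$ and then state that ``the crux \dots is to realize that module concretely inside $L^2(G,C_0(X))$''---but that crux is precisely the entire mathematical content of this implication, and you give no argument for it. An abstract cyclic equivariant module with a prescribed positive-type coefficient does not in general embed into the regular module $L^2(G,C_0(X))$: already in the scalar case ($X$ a point) the fact that a \emph{compactly supported} continuous positive definite function is a coefficient of $L^2(G)$ is Godement's theorem, a nontrivial result whose proof genuinely uses the square-integrability; what you need here is its equivariant $C_0(X)$-module analogue (fiberwise Godement with a continuous selection, or a matrix square-root/Riemann-sum approximation as in the discrete treatment), and merely invoking ``commutativity of $C_0(X)$ and compact support of $k_i$'' does not supply it. Without producing the $\xi_i$ you cannot form the densities $|\xi_i(g)(x)|^2$, so condition $(1)$ is never reached; as written, your proposal establishes $(1)\Rightarrow(3)\Rightarrow(2)$ but leaves the closing implication unproved. (The final normalization and perturbation by a fixed element of $\mathrm{Prob}(G)$, once the $\xi_i$ exist, is fine, since uniformity is only required on compact subsets of $G\times X$.)
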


When a group admits an amenable action
on a $($non-empty$)$ \emph{compact} space, there are some consequences in \Cs-algebra theory.
Motivated by such phenomena, Anantharaman-Delaroche
has introduced the following definition.
\begin{Def}[\cite{AD}, Definition 3.1]
A locally compact group $G$ is said to be \emph{amenable at infinity}
if it admits an amenable action on a compact space.
\end{Def}
Denote by $C^{\rm lu}_{\rm b}(G)$
the \Cs-algebra of all bounded left uniformly continuous $\mathbb{C}$-valued functions on $G$.
It is not hard to see that for any continuous action $G \curvearrowright X$ on a compact space $X$,
there is a unital $G$-$\ast$-homomorphism
$C(X) \rightarrow C^{\rm lu}_{\rm b}(G)$.
Therefore, when $G$ is amenable at infinity,
the left translation action $G \curvearrowright C^{\rm lu}_{\rm b}(G)$ is amenable.

Amenability at infinity has a strong connection with \emph{exactness} of groups.
Recall that a locally compact group $G$ is said to be exact \cite{KW}
if the reduced crossed product functor $- \rc G$ preserves
short exact sequences.
Ozawa \cite{Oz} has shown that 
for discrete groups, amenability at infinity is equivalent to exactness.
For locally compact second countable groups, in \cite{BCL},
based on metric geometry (see e.g., \cite{DL}, \cite{RW}, \cite{Sak}),
the same equivalence is established.
Here we extend the statement to general locally compact groups $G$.

\begin{Prop}\label{Prop:exact}
A locally compact group $G$ is exact if and only if it is amenable at infinity.
\end{Prop}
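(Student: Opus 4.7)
My plan for the backward direction is to follow the classical argument of Anantharaman-Delaroche. Suppose $G \acts X$ is an amenable action on a compact space. Then $C(X) \rc G$ is nuclear. Given any $G$-equivariant short exact sequence $0 \to I \to A \to B \to 0$ of $G$-\Cs-algebras, I would tensor with $C(X)$ and take reduced crossed products; the resulting sequence is exact because the diagonal actions on $I\otimes C(X)$, $A\otimes C(X)$, $B\otimes C(X)$ are amenable, so their full and reduced crossed products coincide, and the full crossed product functor is always exact. The equivariant embeddings $J \hookrightarrow J \otimes C(X)$, $a \mapsto a \otimes 1$, induce injections $J \rc G \hookrightarrow (J \otimes C(X)) \rc G$ for $J \in \{I, A, B\}$ by amenability, and a short diagram chase then yields exactness of $0 \to I \rc G \to A \rc G \to B \rc G \to 0$.

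For the forward direction, the plan is to reduce to the second countable case, which is already handled by Brodzki--Cave--Li \cite{BCL}. Any locally compact group $G$ contains an open $\sigma$-compact subgroup $H$, and exactness is inherited by $H$ (via equivariant induction from $H$ to $G$ and Green's imprimitivity identifying the induced reduced crossed products). Assuming $H$ is exact, I would then invoke the Kakutani--Kodaira theorem to extract a compact normal subgroup $N \triangleleft H$ with $H/N$ second countable. Exactness passes from $H$ to $H/N$: any $(H/N)$-\Cs-algebra is an $H$-\Cs-algebra on which $N$ acts trivially, and the two reduced crossed products coincide via averaging over the compact group $N$. Applying \cite{BCL} to the second countable exact group $H/N$ produces an amenable action $H/N \acts Y$ on a compact space; pulling back along $H \to H/N$ yields an $H$-amenable action on $Y$, since the kernel $N$ is compact and hence amenable.

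It remains to transfer amenability at infinity from the open subgroup $H$ to the ambient group $G$. I would induce the $H$-amenable action on $Y$ to obtain a $G$-amenable action on the locally compact space $G \times_H Y$, and then compactify equivariantly, concretely by taking the Gelfand spectrum of a suitable $G$-invariant unital \Cs-subalgebra inside $C^{\mathrm{lu}}_{\mathrm b}(G, C(Y))^H$, or more directly a $G$-invariant unital subalgebra of $C^{\mathrm{lu}}_{\mathrm b}(G)$ that receives the required $H$-equivariant structure. I expect this last compactification step to be the main obstacle: when $G/H$ is infinite, the induced space is noncompact, and one must choose the compactification so that amenability of the $G$-action extends from the dense locally compact part. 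Once this is carried out, the resulting compact $G$-space witnesses that $G$ is amenable at infinity, completing the proof.
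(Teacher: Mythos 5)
Your backward direction is essentially the classical argument of Anantharaman-Delaroche (the paper simply cites Theorem 7.2 of \cite{AD02}), and your reduction of the forward direction to the second countable case via Kakutani--Kodaira and \cite{BCL} is the same as the paper's. The genuine gap is in your final globalization step. You work with a \emph{single} open $\sigma$-compact subgroup $H\leq G$ and then try to transfer amenability at infinity from $H$ to $G$ by inducing an amenable compact $H$-space $Y$ up to $G\times_H Y$ and compactifying equivariantly. This cannot work as described, because amenability at infinity does not pass from an open subgroup to the ambient group: taking $H=\{e\}$ in a discrete group, the induced space $G\times_H Y$ is just $G$ itself (with $Y$ a point), on which $G$ always acts amenably by left translation, yet non-exact discrete groups exist; hence no $G$-equivariant compactification of $G\times_H Y$ preserving amenability can exist in general. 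The exactness of $G$ itself --- not merely of $H$ --- must enter precisely at the step you flag as "the main obstacle," and your outline never uses it there. A second, related defect is that when $G$ is not $\sigma$-compact, a single open $\sigma$-compact $H$ cannot contain every compact subset of $G$, so controlling the action over compacta of $G$ requires a whole cofinal family of such subgroups, not one.

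The paper's proof repairs both points at once: it writes $G$ as the directed union of \emph{all} its compactly generated open subgroups $G_i$ (each exact, hence amenable at infinity by the Kakutani--Kodaira/\cite{BCL} step you also use), and then, instead of inducing, uses a section of the right $G_i$-coset space to produce a unital $G_i$-equivariant embedding $C^{\rm lu}_{\rm b}(G_i)\rightarrow C^{\rm lu}_{\rm b}(G)$. This shows that the restriction to each $G_i$ of the left translation action $G\acts C^{\rm lu}_{\rm b}(G)$ is amenable, and since every compact subset of $G$ lies in some $G_i$, the full action $G\acts C^{\rm lu}_{\rm b}(G)$ is amenable; the fixed universal compactification $C^{\rm lu}_{\rm b}(G)$ is what replaces your ad hoc compactification of the induced space. (A smaller inaccuracy: for a compact normal $N\lhd H$ acting trivially on a coefficient algebra, the reduced crossed products by $H$ and by $H/N$ do not "coincide via averaging" --- already $\IC\rc H=\rg(H)\neq\IC$ for $H$ compact --- although exactness of quotients by compact normal subgroups is indeed standard and is what the paper cites from \cite{KW}.)
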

\begin{proof}
By Theorem 7.2 in \cite{AD02}, amenability at infinity implies exactness.
We have to show the converse. 
Assume $G$ is exact.

We first consider the case that $G$ is compactly generated.
By the Kakutani--Kodaira theorem \cite{KK}, one has a compact normal subgroup
$K \lhd G$ with the second countable quotient group $G/K$.
Since $G$ is exact, so is $G/K$ \cite{KW}.
By Theorem A of \cite{BCL}, one has an amenable action
$G/K \curvearrowright X$ on a compact space.
Since $K$ is compact,
the inherited action
$G \curvearrowright X$ is amenable.

In general, since $G$ is locally compact,
it is the directed union
of compactly generated open subgroups $(G_i)_{i \in I}.$
Note that exactness passes to open subgroups (\cite{KW}, Corollary 3.5). Hence
each action $G_i \curvearrowright C^{\rm lu}_{\rm b}(G_i)$
is amenable.
Observe that, for each $i\in I$, the right $G_i$-coset decomposition of $G$
gives a unital $G_i$-embedding
$C^{\rm lu}_{\rm b}(G_i) \rightarrow C^{\rm lu}_{\rm b}(G)$.
This yields the amenability of $G \curvearrowright C^{\rm lu}_{\rm b}(G)$.
\end{proof}

\subsection{Universal enveloping $G$-\Ws-algebras}\label{Subsec:Wst}
For a $G$-\Cs-algebra $(A, \alpha)$,
unfortunately the extended action $\alpha\colon G\acts A\sd$ may not 
be continuous. 
Here we recall Ikunishi's universal enveloping $G$-\Ws-algebra
of a $G$-\Cs-algebra \cite{Ik}. This is a replacement of the second dual $A\sd$
in the equivariant setting. Our presentation follows that of \cite{BEW2}.
A somewhat similar work is found in \cite{BBHP}.

Consider the inclusions \[A\subset \cM(A\rtimes G)\subset (A\rtimes G)\sd.\]
Define $A''_\alpha$ to be the ultraweak closure of $A$ in $(A\rtimes G)^{**}$.
By the universality of the full crossed product, this definition coincides with
Ikunishi's original definition \cite{Ik}.
(In fact, one can also replace the full crossed product by the reduced one.
This follows from the proof of the lemma below.)

For a \Cs-algebra $A$, denote by $\mathfrak{S}(A)$ the state space of $A$,
equipped with the weak-$\ast$ topology.
\begin{Def}
Let $(A, \alpha)$ be a $G$-\Cs-algebra.
We say that a state $\varphi \in \mathfrak{S}(A)$ is \emph{$G$-continuous}
if the map $G \ni g \mapsto \varphi \circ \alpha_g^{-1}$
is continuous in norm.
\end{Def}

We record the following useful characterization of the predual of $A''_\alpha$.
\begin{Lem}\label{Lem:contist}
A state $\vp \in \mathfrak{S}(A)$ belongs to $(A''_\alpha)_\ast \subset A^\ast$ if and only if 
it is $G$-continuous.
Thus the extended action $\alpha\colon G\acts A''_\alpha$ 
is continuous in the \Ws-sense.
\end{Lem}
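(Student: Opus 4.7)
My plan is to reduce the lemma to a statement about the full crossed product: since $A''_\alpha$ is the ultraweak closure of $A$ in $(A\rtimes G)^{\ast\ast}$, Kaplansky's density theorem makes the restriction $(A''_\alpha)_\ast\to A^\ast$ isometric, so $(A''_\alpha)_\ast$ coincides with the image of $(A\rtimes G)^\ast\to A^\ast$ under restriction (using also that a normal state on a $W^\ast$-subalgebra extends to a normal state on the ambient algebra). In particular, a state $\vp\in\mathfrak S(A)$ lies in $(A''_\alpha)_\ast$ iff it extends to a state $\psi$ on $A\rtimes G$. The lemma therefore splits into two claims, each to be proved with a covariant-representation argument.

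For the $(\Rightarrow)$ direction, given such an extending state $\psi$, I would pass to its GNS triple $(\pi,H_\psi,\xi_\psi)$, extended nondegenerately to $\mathcal M(A\rtimes G)$. The canonical unitaries $u_g\in\mathcal M(A\rtimes G)$ satisfy $\alpha_g=\Ad u_g$ on $A$, and $U_g:=\pi(u_g)$ is a strongly continuous unitary representation, so
\[(\vp\circ\alpha_g^{-1})(a)=\langle\pi(a)U_g\xi_\psi,U_g\xi_\psi\rangle.\]
Two applications of Cauchy--Schwarz produce $\|\vp\circ\alpha_g^{-1}-\vp\circ\alpha_h^{-1}\|_{A^\ast}\le 2\|\xi_\psi\|\,\|U_g\xi_\psi-U_h\xi_\psi\|$, and strong continuity of $U$ gives the norm continuity required by $G$-continuity.

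The converse is the main technical step. Given a $G$-continuous state $\vp$ with GNS triple $(\pi_\vp,H_\vp,\xi_\vp)$, I would assemble the induced covariant representation $(\pi,\lambda)$ on $L^2(G,H_\vp)$, with $(\pi(a)\eta)(g):=\pi_\vp(\alpha_g^{-1}(a))\eta(g)$ and $\lambda$ the left regular representation; integration yields a $\ast$-representation of $A\rtimes G$ (and, incidentally, of $A\rtimes_{\mathrm r} G$, giving the parenthetical remark in the text). For each $h\in C_c(G)$ with $\|h\|_2=1$ the vector state of $\eta_h(g):=h(g)\xi_\vp$ restricts on $A$ to
\[\int_G|h(g)|^2\,(\vp\circ\alpha_g^{-1})\,dm(g).\]
Choosing $h$ supported in smaller and smaller neighbourhoods of $e$, the norm continuity of $g\mapsto\vp\circ\alpha_g^{-1}$ forces these restrictions to converge to $\vp$ \emph{in the norm of $A^\ast$} (this is where $G$-continuity is crucially used, not mere pointwise convergence). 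A weak-$\ast$ cluster point of the corresponding states on $A\rtimes G$ then has restriction exactly $\vp$, furnishing the desired extension $\psi$.

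The final $W^\ast$-continuity statement is a corollary: the Kaplansky identification $(A''_\alpha)_\ast\hookrightarrow A^\ast$ is isometric and commutes with restriction of $\alpha_g^\ast$ (since $\alpha_g$ preserves $A$), so norm continuity of $g\mapsto\vp\circ\alpha_g^{-1}$ in $A^\ast$---already established for every normal state $\vp$ by the first part---automatically transfers to $(A''_\alpha)_\ast$, which suffices since normal states span the predual. The essential obstacle is clearly the $\Leftarrow$ direction; making the averaging argument work uniformly in $\|a\|\le 1$ is exactly why one must use the norm form of $G$-continuity rather than a weaker strong-type condition.
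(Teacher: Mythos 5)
Your route coincides with the paper's in all but one step: both directions are carried by the same two covariant representations (the GNS representation of an extending state of $A\rtimes G$ for ``normal $\Rightarrow$ $G$-continuous'', and the induced representation on $L^2(G,\cH_\vp)$ for the converse), and your identification of the restriction of the vector state of $\eta_h$ with $\int_G|h(g)|^2\,(\vp\circ\alpha_g^{-1})\,dm(g)$ is exactly the paper's computation with $f=|h|^2\in P_c(G)$. The forward direction and the closing remark on \Ws-continuity are fine.

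The gap is the final step of the converse: you take a weak-$\ast$ cluster point $\psi$ of the vector states $\psi_h$ in $(A\rtimes G)^\ast$ and assert that its restriction to $A$ equals $\vp$. But $A$ sits in $\cM(A\rtimes G)\subset(A\rtimes G)\sd$, not in $A\rtimes G$ itself, so ``restriction to $A$'' is evaluation at elements \emph{outside} $A\rtimes G$ and is not weak-$\ast$ continuous on $(A\rtimes G)^\ast$. Concretely, take $A=\IC$ and $G=\IR$, so $A\rtimes G=\fg(\IR)$ acting on $L^2(\IR)$ by $\lambda$ and $\eta_h=h=m(U)^{-1/2}\chi_U$: then $|\psi_h(f)|=\bigl|\int f(g)\ip{h,\lambda_g h}\,dm(g)\bigr|\le\int_{U-U}|f|\,dm\to0$ for every $f\in C_c(\IR)$ as $U$ shrinks to $\{0\}$, so the unique weak-$\ast$ cluster point is $0$ --- the mass escapes, as is typical for weak-$\ast$ limits of states on non-unital algebras --- and its restriction to $A$ is $0$, not the state $\vp$. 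The repair is already contained in your first paragraph and requires no cluster point at all: since the restriction map $(A''_\alpha)_\ast\to A^\ast$ is isometric, its image is norm-closed in $A^\ast$; each $\psi_h|_A$ lies in that image (being the restriction of an honest state of $A\rtimes G$), and you have shown $\psi_h|_A\to\vp$ in norm, whence $\vp$ lies in the image. This norm-closure argument is precisely how the paper concludes.
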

\begin{proof}
Assume $\varphi \in \mathfrak{S}(A)$ is $G$-continuous.
Then for any $\ve>0$ there is $f\in P_c(G)$ 
such that $\psi:=\int_G f(g) (\vp \circ\alpha_g^{-1})\,dm(g)$ 
satisfies $\|\psi-\vp\|<\ve$. 
Let $(\pi, \cH, \xi)$ be the GNS-triple of $\varphi$.
Consider the $\ast$-representation of $A\rtimes G$ on $L^2(G,\cH)$ 
given by the covariant representation $(\tilde{\pi}, \lambda \otimes \id_{\cH})$,
where $\tilde{\pi}\colon A\to \IB(L^2(G,\cH))$ is given by
\[(\tilde{\pi}(a)\xi)(g):=\pi(\alpha_g^{-1}(a))\xi(g),\quad a\in A, \xi \in L^2(G, \cH), g\in G.\] 
Then one has $\psi(a)=\ip{f^{1/2}\otimes\xi, \tilde{\pi}(a) (f^{1/2}\otimes\xi)}$ for $a\in A$. Hence
$\psi\in (A''_\alpha)_\ast$. 
Since $\ve>0$ was arbitrary, this implies $\vp\in (A''_\alpha)_\ast$.

Conversely, assume that $\vp \in \mathfrak{S}(A)$ defines a normal state on $A''_\alpha$. 
By definition, the state $\vp$ comes from a state $\psi$ on $A \rtimes G$. 
Let $(\pi, \cH, \xi)$ be the GNS-triple of $\psi$.
Then, in turn, $\pi$ gives rise to a covariant representation $\pi_A\colon A\to\IB(\cH)$ and 
$u\colon G \curvearrowright \cH$. Note that the unitary representation $u$ is strongly continuous. 
Now $G$-continuity of $\vp$ follows from the formula 
$(\vp\circ\alpha_g^{-1})(a)=\ip{u_g\xi,\pi_A(a)u_g\xi}$, $a\in A$. 
\end{proof}
For a contractive completely positive (c.c.p.)~ map $\Phi \colon A \rightarrow B$ between \Cs-algebras,
$\Phi$ is said to be \emph{non-degenerate} if
$\Phi((A)_1)(B)_1$ is dense in $(B)_1$.
Here $(X)_1$ denotes the closed unit ball of a Banach space $X$. 
\begin{Lem}\label{Lem:extension}
Let $(A, \alpha)$ and $(B, \beta)$ be $G$-\Cs-algebras.
Let $\Phi \colon A \rightarrow B$ be a non-degenerate $G$-c.c.p.~ map.
Then $\Phi$ admits a normal $G$-u.c.p.~ extension
$\Phi'' \colon A''_\alpha \rightarrow B''_\beta$. 
\end{Lem}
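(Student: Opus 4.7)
The plan is to construct $\Phi''$ as the Banach-space adjoint of a suitable restriction of the dual map $\Phi^\ast\colon B^\ast\to A^\ast$, using Lemma \ref{Lem:contist} to identify the preduals of $A''_\alpha$ and $B''_\beta$ with the spaces of $G$-continuous functionals. The first step is to check that $\Phi^\ast$ sends $G$-continuous states of $B$ to $G$-continuous (possibly sub-state) functionals on $A$. This is immediate from the equivariance $\Phi\circ\alpha_g=\beta_g\circ\Phi$: for any $\psi\in\mathfrak{S}(B)$ we have $\|\psi\circ\Phi\circ\alpha_g^{-1}-\psi\circ\Phi\|=\|(\psi\circ\beta_g^{-1}-\psi)\circ\Phi\|\le\|\psi\circ\beta_g^{-1}-\psi\|$, which tends to $0$ as $g\to e$ whenever $\psi$ is $G$-continuous. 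By Lemma \ref{Lem:contist}, together with the fact that $(B''_\beta)_\ast$ is the complex linear span of its normal states, $\Phi^\ast$ restricts to a bounded linear map $S\colon(B''_\beta)_\ast\to(A''_\alpha)_\ast$.

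Now set $\Phi'':=S^\ast\colon A''_\alpha\to B''_\beta$. As the adjoint of a map between preduals, $\Phi''$ is weak-$\ast$-to-weak-$\ast$ continuous, hence normal, and a direct computation from the definition of $S^\ast$ yields $\Phi''|_A=\Phi$. Complete positivity of $\Phi''$ on $A''_\alpha$ then follows by Kaplansky density and normality: any positive element of $\IM_n(A''_\alpha)$ is an ultraweak limit of positive elements of $\IM_n(A)$, on which $\Phi''$ agrees with the c.p.\ map $\Phi$, and ultraweak limits of positive elements are positive. Similarly, the $G$-equivariance of $\Phi''$ follows from that of $\Phi$ on the ultraweakly dense subalgebra $A$, combined with the normality of both $\Phi''$ and the extended $G$-actions on $A''_\alpha$ and $B''_\beta$ (guaranteed by Lemma \ref{Lem:contist}).

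The remaining point, which is the main technical step, is unitality. Let $(e_\lambda)$ be a contractive approximate unit of $A$. Since $A\subset\cM(A\rtimes G)$ acts non-degenerately on $A\rtimes G$, the support projection of $A$ in $(A\rtimes G)^{\ast\ast}$ coincides with the identity of $\cM(A\rtimes G)$, whence $e_\lambda\to 1_{A''_\alpha}$ ultraweakly in $(A\rtimes G)^{\ast\ast}$ and in particular $1_{A''_\alpha}$ is the unit of the von Neumann algebra $A''_\alpha$. By normality of $\Phi''$, $\Phi''(1_{A''_\alpha})$ equals the ultraweak limit of $(\Phi(e_\lambda))$ in $B''_\beta$; non-degeneracy of $\Phi$ means $(\Phi(e_\lambda))$ is an approximate unit of $B$, whose ultraweak limit in $(B\rtimes G)^{\ast\ast}$ is $1_{B''_\beta}$ by the same reasoning. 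The main obstacle is precisely this matching of the abstract unit of $A''_\alpha$ with a limit of approximate units of $A$ inside the ambient algebra $(A\rtimes G)^{\ast\ast}$, and analogously for $B$, so that the non-degeneracy hypothesis on $\Phi$ translates cleanly into $\Phi''(1_{A''_\alpha})=1_{B''_\beta}$.
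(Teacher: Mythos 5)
Your construction is exactly the paper's: by Lemma \ref{Lem:contist} the dual map $\Phi^\ast$ restricts to a map $(B''_\beta)_\ast\to(A''_\alpha)_\ast$, and $\Phi''$ is its adjoint; the paper states this in three lines and leaves normality, positivity, equivariance and unitality as routine, and you fill these in correctly except at one point. The imprecise step is the assertion that non-degeneracy makes $(\Phi(e_\lambda))$ an approximate unit of $B$: the paper's definition only says that $\Phi((A)_1)(B)_1$ is dense in $(B)_1$, which for a c.c.p.\ map need not force $\Phi(e_\lambda)b\to b$ in norm; what you actually need, namely $\Phi''(1_{A''_\alpha})=1_{B''_\beta}$, follows instead by applying to each normal state $\psi$ of $B''_\beta$ the Cauchy--Schwarz estimate $|\psi(\Phi(a)b)|^2\le\psi\bigl(\Phi(a)\Phi(a)^\ast\bigr)\psi(b^\ast b)\le\psi\bigl(\Phi''(1)\bigr)$ for $a\in(A)_1$, $b\in(B)_1$, and taking the supremum over the dense set $\Phi((A)_1)(B)_1\subset(B)_1$ to get $\psi(\Phi''(1))\ge\|\psi\|^2=1$.
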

\begin{proof}
Consider the dual map $\Phi^\ast \colon B^\ast \rightarrow A^\ast$.
Since $\Phi^\ast$ is $G$-equivariant, by Lemma \ref{Lem:contist},
$\Phi^\ast$ restricts to the map $\Phi^\ast_{\rm c} \colon (B''_{\beta})_\ast \rightarrow (A''_{\alpha})_\ast$.
The dual map $\Phi'':=(\Phi^\ast_{\rm c})^\ast$
gives a normal extension of $\Phi$. Since $\Phi$ is non-degenerate, $\Phi''$ is unital.
\end{proof}

For any Hilbert space $\cH$ and a von Neumann algebra $M\subset\IB(\cK)$, 
denote by $\cH\btimes M$ the Hilbert \Ws-module 
\begin{align*}
\cH\btimes M &:= \{ T \in \IB(\cK,\cH\otimes\cK) : T x'=(\id_{\cH}\otimes x')T\mbox{ for all }x'\in M'\}\\
&\cong \{ T(P_0\otimes 1) : T \in \IB(\cH)\btimes M\},
\end{align*}
where $P_0$ is any fixed rank-one projection on $\cH$. 
Note that in the second presentation,
the $M$-valued inner product is given by
$\ip{S,T}:=S^\ast T\in P_0 \btimes M =M$ for $S, T\in \cH\btimes M$. 
If $\cH_0\subset\cH$ is a dense subspace and $M_0\subset M$ is an ultraweakly 
dense $\ast$-subalgebra, then the unit ball of the algebraic tensor product
$\cH_0 \odot M_0$ is ultrastrongly dense in 
$(\cH\btimes M)_1$ by Kaplansky's density theorem (applied to $\IB(\cH)\btimes M$). 
In particular, when $A \subset M$ is an ultrastrongly dense $\ast$-subalgebra,
the unit ball of $C_c(G, A)$ is ultrastrongly dense in the unit ball of $C_c(G, M) \subset L^2(G) \btimes M$.

\subsection{Amenability conditions for \Cs-dynamical systems}
We review amenability type conditions for \Cs-dynamical systems.

The first definition is based on enveloping \Ws-algebras.
\begin{Def}[\cite{AD}, \cite{BEW2}, \cite{BC}]
A \Cs-dynamical system $\alpha\colon G \curvearrowright A$ is said to be \emph{amenable}
if the universal enveloping \Ws-dynamical system $\alpha\colon G \curvearrowright A''_\alpha$
is amenable (in the sense of Theorem~\ref{Thm:BC}).
\end{Def}
The next two definitions are based on the $G$-\Cs-correspondence $L^2(G, A)$.
\begin{Def}[\cite{EN}, Definition 3.6, see also Definition 3.27 in \cite{BEW2}]
A $G$-\Cs-algebra $(A, \alpha)$ is said to have
the \emph{Exel--Ng approximation property}
if
there is a bounded net $(\xi_i)_{i\in I}$ in $L^2(G, A)$
with
\[\langle \xi_i, a \tilde{\alpha}_g(\xi_i)\rangle \rightarrow a\]
in norm uniformly on compact subsets of $G$ for each $a\in A$.
\end{Def}
Recently, motivated by examples in \cite{Suzeq}, a different amenability type condition is introduced in \cite{BEW}.
\begin{Def}[\cite{BEW}, \cite{BEW2}]
A $G$-\Cs-algebra $(A, \alpha)$ is said to have
the \emph{quasi-central approximation property} (QAP)
if there is a net $(\xi_i)_{i\in I}$ in $L^2(G, A)$ satisfying
\[\| \xi_i\| \leq 1 {\rm~for~all~}i\in I,\quad \|[\xi_i, a]\| \rightarrow 0 {\rm~for~all~} a\in A,\]
and
\[\langle \xi_i, \tilde{\alpha}_g(\xi_i)\rangle \rightarrow 1\]
uniformly on compact subsets of $G$ in the strict topology.

Note that the third condition can be replaced by the following condition:
\noindent $\ip{\xi_i, \xi_i} \rightarrow 1$ strictly, and for every $a\in A$,
$\|(\tilde{\alpha}_g (\xi_i) - \xi_i) a\| \rightarrow 0$ uniformly on compact subsets of $G$.\end{Def}

We will show that these two conditions are in fact equivalent.
We first recall the following lemma.
This is a part of Kasparov's technical lemma (Lemma in page 152 of \cite{Kas}).
As the part we use is easy to show,
for reader's convenience, we include a proof.
\begin{Lem}\label{Lem:qa}
Let $(A, \alpha)$ be a $G$-\Cs-algebra.
Then there is a \emph{$G$-approximate unit} of $(A, \alpha)$;
that is, there is an approximate unit $(e_i)_{i\in I}$ of $A$ with
$\| \alpha_g(e_i) - e_i\| \rightarrow 0$ uniformly on compact subsets of $G$.
\end{Lem}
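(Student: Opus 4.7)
The plan is a Day--Mazur convex-combination argument. Starting from any approximate unit $(u_\lambda)$ of $A$, note that the naive average $\int_G f(g)\alpha_g(u_\lambda)\,dm(g)$ with $f\in P_c(G)$ only controls the translation defect by $\|L_tf-f\|_1$, which cannot be made uniformly small on a compact set when $G$ is non-amenable; so an abstract convex-analysis argument is needed.

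I fix a compact $K\subset G$, a finite set $F\subset A$, and $\ve>0$, and aim to produce a positive contraction $e\in A$ with $\|ea-a\|<\ve$ for $a\in F$ and $\sup_{g\in K}\|\alpha_g(e)-e\|<\ve$; directing $(F,K,\ve)$ then assembles the desired net. The target $e$ is sought in the convex set
\[ X:=\{e\in A: 0\le e\le 1,\ \|ea-a\|<\ve\text{ for all }a\in F\}, \]
which contains $u_\lambda$ for large $\lambda$, together with the bounded linear map $\Psi\colon A\to C(K,A)$ given by $\Psi(a)(g):=\alpha_g(a)-a$. Since $\Psi(X)$ is convex and bounded in $C(K,A)$, Mazur's theorem reduces the task to showing $\Psi(u_\lambda)\to 0$ in the weak topology of $C(K,A)$.

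For this weak convergence, fix $\phi\in C(K,A)^*$ and invoke the Dinculeanu-type representation $\phi(\xi)=\int_K\varphi_g(\xi(g))\,d\mu(g)$ for some finite positive Borel measure $\mu$ on $K$ and a weak*-measurable family $(\varphi_g)_{g\in K}\subset(A^*)_1$. The composition $\tilde\eta:=\phi\circ\Psi\in A^*$ extends weak*-continuously to $\tilde\eta\sd$ on $A\sd$, and since each extended action $\alpha_g\sd\colon A\sd\to A\sd$ is a unital $\ast$-automorphism, dominated convergence yields
\[ \tilde\eta\sd(1_{A\sd})=\int_K\bigl[\varphi_g(\alpha_g\sd(1_{A\sd}))-\varphi_g(1_{A\sd})\bigr]\,d\mu(g)=0. \]
Since $u_\lambda\to 1_{A\sd}$ in the weak*-topology, this forces $\phi(\Psi(u_\lambda))=\tilde\eta(u_\lambda)\to 0$, placing $0$ in the weak (and hence norm) closure of $\Psi(X)$; thus some $e\in X$ satisfies $\sup_{g\in K}\|\alpha_g(e)-e\|<\ve$.

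The main obstacle is this weak-convergence step, which hinges on the Dinculeanu integral representation of $C(K,A)^*$ together with the interchange of weak*-limit and integral via dominated convergence; everything else is routine convex analysis combined with the standard fact that any approximate unit of $A$ converges weak* to the identity of $A\sd$. A cleaner alternative would bypass Dinculeanu by checking weak convergence on the tensor-type functionals built from $G$-continuous states (whose density in $A^*$ comes from averaging $\vp\mapsto\int f(g)(\vp\circ\alpha_g^{-1})\,dm(g)$), but the bounded-net subtlety behind such a reduction is a minor nuisance that the Dinculeanu route sidesteps.
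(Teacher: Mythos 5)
Your overall strategy---run a Hahn--Banach/Mazur convexity argument on an approximate unit, using that its translates are weakly close to it---is the same as the paper's, but you front-load the uniformity over the compact set $K$ into the Banach space $C(K,A)$, and that is exactly where the proof breaks. The reduction via Mazur is fine; the gap is the claim that $\phi(\Psi(u_\lambda))\to 0$ for every $\phi\in C(K,A)^*$. Your justification is ``dominated convergence,'' but an approximate unit of a general (non-$\sigma$-unital) \Cs-algebra is a net, not a sequence, and dominated convergence fails for nets (indicators of finite subsets of $[0,1]$ already show this). Equivalently, your display asserts that evaluation at $1_{A\sd}$ commutes with the Gelfand-type integral $\phi\circ\Psi=\int_K(\varphi_g\circ\alpha_g-\varphi_g)\,d\mu(g)$; a weak$^\ast$ integral is only guaranteed to commute with pairings against $A$, not against elements of $A\sd$, and this interchange is false as a general principle: take $A=c_0(Y)$ with $Y$ uncountable discrete and $\varphi_g:=\delta_{y_g}$ for an injection $g\mapsto y_g$ of $([0,1],\mathrm{Leb})$ into $Y$; then $\int_0^1\varphi_g\,dg=0$ in $A^*$ while $\int_0^1\langle 1_{A\sd},\varphi_g\rangle\,dg=1$. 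The identity you want is in fact true for the special family $\varphi_g\circ\alpha_g-\varphi_g$, but any proof of it must use the point-norm continuity of $\alpha$ quantitatively; your computation uses only that each $\alpha_g\sd$ is unital (continuity enters solely to make $\Psi$ land in $C(K,A)$), which cannot suffice, and in effect the missing step carries essentially the whole content of the lemma.

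The paper's proof sidesteps all of this by a smoothing step you dismissed too quickly: averaging a given approximate unit against one \emph{fixed} $f\in P_c(G)$, $y_j:=\int_G f(g)\alpha_g(x_j)\,dm(g)$, does not make the translation defect small (no amenability is needed for that), but it does make the net $(y_j)$ equi-$G$-continuous. One then needs weak convergence $\alpha_g(y_j)-y_j\to 0$ only for each fixed $g$---an elementary consequence of $y_j\to 1_{A\sd}$ weak$^\ast$ and $\alpha_g\sd(1)=1$, with no Dinculeanu representation---runs the standard quasi-central Hahn--Banach argument to get an approximate unit $(e_i)$ in the convex hull of the $y_j$ with $\|\alpha_g(e_i)-e_i\|\to 0$ pointwise in $g$, and finally upgrades to uniform convergence on compacta using the equicontinuity inherited by convex combinations. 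Your route can be salvaged when $A$ is $\sigma$-unital (choose a sequential approximate unit, so dominated convergence applies), but the lemma is stated and used in full generality, so as written the weak-convergence step is a genuine gap.
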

\begin{proof}
Take an approximate unit $(x_j)_{j\in J}$ of $A$.
Fix $f\in P_c(G)$
and set $y_j := \int_{G} f(g) \alpha_g(x_j) \,dm(g)$ for $j\in J$.
Then $(y_j)_{j\in J}$ is an approximate unit of $A$.
Moreover the net $(y_j)_{j\in J}$
is equi-$G$-continuous.
(That is, the maps $G \ni g \mapsto \alpha_g(y_j)$; $j\in J$, are equicontinuous in norm.)
Since $\alpha_g(y_i) - y_i \rightarrow 0$ weakly for all $g\in G$,
by the Hahn--Banach theorem (cf.~the construction of a quasi-central approximate unit),
one has an approximate unit $(e_i)_{i\in I}$ in the convex hull of $\{ y_j: j\in J\}$
satisfying $\|\alpha_g(e_i) - e_i\| \rightarrow 0$ for all $g\in G$.
As $(e_i)_{i\in I}$ is equi-$G$-continuous,
in fact the convergence is uniform on compact subsets of $G$.
\end{proof}
For two elements $x, y$ in a normed space $X$,
denote by $x\approx_\ve y$ when $\|x -y \| < \ve$.
\begin{Thm}\label{Thm:AP}
The QAP is equivalent to the Exel--Ng approximation property.
\end{Thm}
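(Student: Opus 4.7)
The plan is as follows. The implication QAP $\Rightarrow$ Exel--Ng follows from the decomposition
\[
\langle \xi_i, a\tilde{\alpha}_g(\xi_i)\rangle = \langle \xi_i, \tilde{\alpha}_g(\xi_i)\rangle a + \langle \xi_i, [a, \tilde{\alpha}_g(\xi_i)]\rangle.
\]
The first summand tends to $a$ in norm uniformly on compact $K\subset G$, because $\langle \xi_i, \tilde{\alpha}_g \xi_i\rangle \to 1$ strictly. For the second, I would use the $\tilde{\alpha}_g$-equivariance $[a, \tilde{\alpha}_g(\xi_i)] = -\tilde{\alpha}_g([\alpha_{g^{-1}}(a), \xi_i])$ together with Cauchy--Schwarz and the isometry $\|\tilde{\alpha}_g\eta\| = \|\eta\|$ to bound it in norm by $\|[\xi_i, \alpha_{g^{-1}}(a)]\|$. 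An $\varepsilon$-net argument over the compact orbit $\{\alpha_{g^{-1}}(a): g\in K\}\subset A$ converts the pointwise QAP commutator condition into uniform convergence to $0$ on $K$.

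For the reverse direction Exel--Ng $\Rightarrow$ QAP, I would first extract, by an Arveson-type convex-combination applied to the $G$-approximate unit of Lemma~\ref{Lem:qa}, a unit that is simultaneously a $G$-approximate unit and \emph{quasi-central}, meaning $\|[e_\mu, a]\|\to 0$ for all $a\in A$. The candidate QAP net is then
\[
\eta_{i,\mu} := e_\mu \xi_i,
\]
indexed by a diagonal subnet in which $\mu\to\infty$ first and $i$ is chosen large depending on $\mu$, and rescaled by $\max(1, \|\eta_{i,\mu}\|)$ to enforce the unit-ball bound. For the inner-product condition I would rewrite
\[
\langle \eta_{i,\mu}, \tilde{\alpha}_g(\eta_{i,\mu})\rangle = \langle \xi_i, e_\mu \alpha_g(e_\mu)\,\tilde{\alpha}_g(\xi_i)\rangle
\]
and apply Exel--Ng uniformly over the compact family $\{e_\mu \alpha_g(e_\mu):g\in K\}\subset A$ to deduce convergence to $e_\mu\alpha_g(e_\mu)$; multiplying by $a\in A$ and using $e_\mu\alpha_g(e_\mu)a \to a$ (uniformly on compact $g$, from the $G$-approximate unit property) then closes the strict-convergence argument.

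The commutator condition is the technical heart. I would decompose $[e_\mu\xi_i, a] = e_\mu[\xi_i, a] + [e_\mu, a]\xi_i$ and drop the second summand by quasi-centrality. The first is handled via
\[
\|e_\mu[\xi_i, a]\|^2 = \big\| a^*\langle \xi_i, e_\mu^2\xi_i\rangle a - a^*\langle \xi_i, e_\mu^2 a\xi_i\rangle - \langle \xi_i, a^* e_\mu^2\xi_i\rangle a + \langle \xi_i, a^* e_\mu^2 a\xi_i\rangle\big\|;
\]
crucially, every summand has the middle-insertion form $\langle \xi_i, b\xi_i\rangle$ with $b\in A$, so each converges under Exel--Ng to $a^* e_\mu^2 a$ and the four contributions cancel. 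The norm bound $\|\eta'\|\leq 1$ then comes from $\|\eta_{i,\mu}\|^2 = \|\langle \xi_i, e_\mu^2 \xi_i\rangle\|\to\|e_\mu^2\|\leq 1$, with the rescaling factor tending to $1$ so that the other limits are preserved.

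The main obstacle---and the reason for choosing $e_\mu\xi_i$ over the seemingly symmetric $\xi_i e_\mu$---is that Exel--Ng only controls middle-insertions $\langle \xi_i, b\xi_i\rangle$ with $b\in A$. A right-multiplication modification would produce the orphan term $a^*\langle \xi_i, \xi_i\rangle a$ in the commutator expansion, which Exel--Ng cannot reach. Conjugating by $e_\mu$ from the left repositions every term into the correct sandwich form, and the order of limits (quasi-centrality first, Exel--Ng second) is essential for the cancellation to close.
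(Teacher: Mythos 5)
Your proposal is correct and follows essentially the same route as the paper: the easy direction is handled by the commutator decomposition, and the converse is proved by cutting a witnessing Exel--Ng net by a $G$-approximate unit from the left, fixing the unit first and the Exel--Ng index second, and expanding $\|[e_\mu\xi_i,a]\|^2$ into middle insertions $\langle\xi_i,b\,\tilde{\alpha}_g(\xi_i)\rangle$ that cancel. The only superfluous step is the Arveson-type extraction of a quasi-central unit: any approximate unit of $A$ automatically satisfies $\|[e_\mu,a]\|\to 0$ for $a\in A$, so the $G$-approximate unit of Lemma~\ref{Lem:qa} already suffices.
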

\begin{proof}
Clearly the QAP implies the Exel--Ng approximation property.

Assume that $(A, \alpha)$ is a $G$-\Cs-algebra with the Exel--Ng approximation property.
Choose a $G$-approximate unit $(e_i)_{i\in I}$ of $A$.
Take a bounded net $(\xi_j)_{j\in J}$ in $C_c(G, A) \subset L^2(G, A)$ that witnesses
the Exel--Ng approximation property of $\alpha$.
Put
\[c:= \sup_{j\in J} \|\xi_j\|^2.\]

Let a compact set $K \subset G$, a finite set $F\subset (A)_1$, and $\ve>0$ be given.
Take $i \in I$ satisfying
$e_i a \approx_\ve a \approx_{\ve} a e_i$, $\alpha_g(e_i) \approx_{\ve/c} e_i$ for $a\in F$, $g \in K$.
Choose $j\in J$ with
\[\langle \xi_j, x \tilde{\alpha}_g(\xi_j) \rangle \approx_\ve x\quad {\rm for}~ x\in \{e_i ^2, e_ia e_i, e_i a^\ast a e_i: a\in F\},~ g\in K \cup \{ e \}.\]
Put $\zeta:= e_i \xi_j$.
Observe that
\[\langle \zeta, \zeta \rangle = \langle \xi_j, e_i^2 \xi_j \rangle \approx_{\ve} e_i^2.\]
In particular $\|\zeta\|^2 < 1+ \ve$.
For any $g\in K$,
\[\langle \zeta, \tilde{\alpha}_g(\zeta) \rangle = \langle e_i \xi_j, \alpha_g(e_i)\tilde{\alpha}_g(\xi_j) \rangle
\approx_{\ve} \langle e_i \xi_j, e_i\tilde{\alpha}_g(\xi_j)\rangle
= \langle \xi_j, e_i ^2\tilde{\alpha}_g(\xi_j)\rangle
\approx_{\ve} e_i^2 \approx_{ \ve} \langle \zeta, \zeta \rangle.
\]
For any $a \in F$,
\[\langle \zeta a, a \zeta \rangle \approx_{\ve} a^\ast e_i a e_i \approx_{2\ve} a^\ast e_i^2a
\approx_{\ve} \langle \zeta a, \zeta a\rangle ,\]
\[\langle a \zeta, a \zeta \rangle =\langle \zeta, a^\ast a \zeta \rangle \approx_{\ve} e_i a^\ast a e_i \approx_{2\ve} e_i a^\ast e_i a
\approx_{\ve} \langle a\zeta, \zeta a\rangle.\]
These relations yield
\[\|[\zeta, a]\|^2 < 8 \ve.\]
This proves the QAP of $\alpha$.
\end{proof}
\begin{Rem}
The proof also shows that the third condition in the definition of the QAP can be replaced by the following (formally stronger) condition.
\[\ip{\xi_i, \xi_i} \rightarrow 1 {\rm~strictly~and~} \|\xi_i - \tilde{\alpha}_g(\xi_i)\| \rightarrow 0{\rm~uniformly~on~compact~subsets~of~}G.\]
\end{Rem}

\section{Characterizations of amenable \Cs-dynamics}
In this section, we show that
amenability is equivalent to the QAP for \emph{general} \Cs-dynamical systems.
This closes the circle of implications that were left open (cf.~ Section 8.1 of \cite{BEW2}).
We remark that the statement was recently shown
by Buss--Echterhoff--Willett \cite{BEW2} (see also \cite{ABF}) for \emph{unital discrete} \Cs-dynamics.
However we emphasize that, even if one is interested only in unital discrete \Cs-dynamics, the generalities here are crucial in Section \ref{Sec:Example}.

The following lemma replaces a uniform convergence condition by
a pointwise convergence condition (cf.\ Lemma~3.5 in \cite{BC}).
\begin{Lem}\label{Lem:conv}
Let $(A, \alpha)$ be a $G$-\Cs-algebra.
Let $(\xi_i)_{i \in I}$ be a bounded net in $L^2(G,A)$. 
Then $(\xi_i-g\ast \xi_i)a \to0$ for every $a\in A$ and $g\in G$ 
uniformly on compact subsets of $G$ if and only if 
$(\xi_i-f\ast \xi_i)a \to0$ for every $a\in A$ and $f\in P_c(G)$.
\end{Lem}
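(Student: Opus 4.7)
The plan is as follows. The $(\Rightarrow)$ direction is essentially a tautology: given $f\in P_c(G)$ with compact support $K=\supp f$, the formula $f\ast\xi_i=\int_G f(g)(g\ast\xi_i)\,dm(g)$ and $\int f\,dm=1$ give
\[
(\xi_i-f\ast\xi_i)a=\int_K f(g)(\xi_i-g\ast\xi_i)a\,dm(g),
\]
whose norm is at most $\sup_{g\in K}\|(\xi_i-g\ast\xi_i)a\|$, which tends to zero by hypothesis.

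For the converse direction, the backbone is the algebraic identity
\[
g\ast\xi_i-\xi_i \;=\; g\ast(\xi_i-f\ast\xi_i)\;+\;\bigl((L_g f)\ast\xi_i-\xi_i\bigr),
\]
where $L_g f(h):=f(g^{-1}h)$ and $f\in P_c(G)$ is any chosen element. This relies on $g\ast(f\ast\xi_i)=(L_g f)\ast\xi_i$, a change of variables that uses left-invariance of $m$; note also that $L_g f\in P_c(G)$, since $\int L_g f\,dm=\int f\,dm=1$. Because $g\ast$ is isometric on $L^2(G,A)$ and satisfies $(g\ast\eta)b=g\ast(\eta\,\alpha_{g^{-1}}(b))$, multiplying the identity on the right by $a$ and taking norms yields
\[
\|(g\ast\xi_i-\xi_i)a\|\;\leq\;\|(\xi_i-f\ast\xi_i)\alpha_{g^{-1}}(a)\|+\|((L_g f)\ast\xi_i-\xi_i)a\|
\]
for any fixed $f\in P_c(G)$. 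The task becomes showing both summands tend to zero uniformly in $g\in K$, for any given compact set $K\subset G$ and fixed $a\in A$.

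For the first summand, the right-multiplication maps $b\mapsto(\xi_i-f\ast\xi_i)b$ from $A$ into $L^2(G,A)$ are uniformly bounded in $i$ (by $2\sup_i\|\xi_i\|$) and converge to zero pointwise on $A$ by hypothesis; a standard three-$\ve$ argument then promotes this to uniform convergence on the compact subset $\{\alpha_{g^{-1}}(a):g\in K\}\subset A$ (compact because $\alpha$ is point-norm continuous). For the second summand, the Lipschitz estimate $\|f_1\ast\xi_i-f_2\ast\xi_i\|\leq\|f_1-f_2\|_1\sup_i\|\xi_i\|$ makes $f'\mapsto(\xi_i-f'\ast\xi_i)a$ an equi-Lipschitz family on $L^1(G)$, so pointwise convergence on $P_c(G)$ (the hypothesis) upgrades automatically to uniform convergence on compact subsets of $P_c(G)$; apply this to $\{L_g f:g\in K\}\subset P_c(G)$, which is compact in $L^1(G)$ because $g\mapsto L_g f$ is norm-continuous. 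The main obstacle is spotting the algebraic identity that splits the translation defect into a ``convolution defect'' at $a$ and a ``translated convolution defect'' at $\alpha_{g^{-1}}(a)$; once that is in hand, the remainder is a routine equicontinuity-to-uniformity upgrade.
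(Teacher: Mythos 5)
Your proof is correct and follows essentially the same route as the paper: both fix $f\in P_c(G)$, insert $g\ast f\ast\xi_i=(L_gf)\ast\xi_i$ between $g\ast\xi_i$ and $\xi_i$, and obtain uniformity over $g\in K$ from the $L^1$-continuity of translation (your equi-Lipschitz/compactness step is exactly the paper's covering of $K$ by sets $s_mU$ with $\|u\ast f-f\|_1<\ve$ on $U$). You are in fact slightly more careful than the paper in observing that the term $g\ast(\xi_i-f\ast\xi_i)a$ requires control over the compact set $\{\alpha_{g^{-1}}(a):g\in K\}$ rather than only at $a$ itself.
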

\begin{proof}
The `only if' part follows from direct calculations.
 We prove the converse. 
Let a compact subset $K\subset G$, a finite subset $F \subset (A)_1$, and $\ve>0$ be given. 
Take $f\in P_c(G)$.
Define \[U:=\{u\in G: \|u \ast f - f \|_1<\ve\}.\]
Choose a finite sequence $s_0:=e, s_1,\ldots,s_n$ in $G$ 
satisfying $K\subset \bigcup_{m=0}^n s_mU$. 
We put $f_m:=s_m \ast f \in C_c(G)$ for $m=0, 1, \ldots, n$. 
Assume that $\xi\in (L^2(G,A))_1$ satisfies
$\|(\xi-f_m \ast \xi)a\| <\ve$ for $m=0, 1, \ldots, n$, $a\in F$.
Then for $a\in F$ and $g\in K$, taking $u\in U$ and $m$ with $g= s_m u$,
one has
\[(g \ast \xi)a =[(s_mu) \ast \xi]a\approx_\ve [(s_m u)\ast f\ast \xi]a \approx_\ve (f_m\ast \xi)a \approx_\ve \xi a.
\]
This proves the claim.
\end{proof}

Now we are able to prove the promised theorem.
For a later application (Corollary \ref{Cor:closed}), we also
give a new characterization of amenable actions by the second dual.
\begin{Thm}\label{Thm:QAP}
Let $(A, \alpha)$ be a \Cs-dynamical system.
The following conditions are equivalent.
\begin{enumerate}[\upshape(1)]
\item The action $\alpha$ has the QAP.
\item The action $\alpha$ is amenable.
\item There is a $G$-conditional expectation
\[L^\infty(G) \btimes \cZ(A\sd )\rightarrow \cZ(A\sd).\]
Here we equip $L^\infty(G) \btimes \cZ(A\sd)$
with the diagonal $G$-action $\tilde{\alpha}$ of the left translation action and the $($possibly discontinuous$)$ action induced from $\alpha$.
\end{enumerate}
\end{Thm}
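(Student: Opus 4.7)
The plan is to prove the cycle (1) $\Rightarrow$ (2) $\Rightarrow$ (3) $\Rightarrow$ (1). The principal new content is the closing step (2) $\Rightarrow$ (1) (equivalently (3) $\Rightarrow$ (1)), which extends the discrete unital result of \cite{BEW2} to general \Cs-dynamical systems. The other implications are conceptually routine, but the non-unital, non-discrete setting requires care.

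For (1) $\Rightarrow$ (2), I would view a QAP witness $(\xi_i) \subset L^2(G, A)$ inside $L^2(G) \btimes A''_\alpha$. The asymptotic centrality $\|[\xi_i, a]\| \to 0$ against $a \in A$, together with Kaplansky density of $A$ in $A''_\alpha$ and uniform boundedness, extends to asymptotic centrality against all of $A''_\alpha$ in the ultrastrong topology. Consequently the coefficient functions $g \mapsto \ip{\xi_i, \tilde\alpha_g(\xi_i)}$ accumulate ultraweakly inside $\cZ(A''_\alpha)$. A convex-combination / Hahn--Banach averaging (in the spirit of Lemma~\ref{Lem:qa}) then produces a net in $C_c(G, \cZ(A''_\alpha))$ verifying Theorem~\ref{Thm:BC}(3), giving amenability of $G \acts A''_\alpha$.

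For (2) $\Leftrightarrow$ (3), the canonical surjection $A^{**} \twoheadrightarrow A''_\alpha$ (coming from the universal property of Section~\ref{Subsec:Wst}) corresponds to a $G$-invariant central projection $z \in \cZ(A^{**})$ with $A''_\alpha = A^{**}z$ and $\cZ(A''_\alpha) = z\cZ(A^{**})$. The direction (3) $\Rightarrow$ (2) is immediate: cutting a $G$-conditional expectation on $L^\infty(G) \btimes \cZ(A^{**})$ by $z$ yields one onto $\cZ(A''_\alpha)$, whence Theorem~\ref{Thm:BC}(2) gives amenability. For (2) $\Rightarrow$ (3), one starts from a $G$-conditional expectation on $L^\infty(G) \btimes \cZ(A''_\alpha)$ and direct-sums it with a slice map on the $G$-invariant complementary summand $L^\infty(G) \btimes (1-z)\cZ(A^{**})$.

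The heart of the proof is (2) $\Rightarrow$ (1). Take a net $(\zeta_i) \subset C_c(G, \cZ(A''_\alpha))$ witnessing Theorem~\ref{Thm:BC}(3): $\ip{\zeta_i, \zeta_i} = 1$ and $\ip{\zeta_i, \tilde\alpha_g(\zeta_i)} \to 1$ ultraweakly on compacta. Each $\zeta_i$ is central in $A''_\alpha$, so $[\zeta_i, a] = 0$ for $a \in A$. Using Kaplansky density of $A$ in $A''_\alpha$ lifted to the unit ball of $L^2(G) \btimes A''_\alpha$ (as recorded in the excerpt), approximate each $\zeta_i$ ultrastrongly by elements $\eta \in (C_c(G, A))_1$; this yields $\|(\zeta_i - \eta)a\| \to 0$ and $\|a(\zeta_i - \eta)\| \to 0$ for fixed $a \in A$, whence $\|[\eta, a]\| \to 0$, and $\ip{\eta, \tilde\alpha_g(\eta)} \to \ip{\zeta_i, \tilde\alpha_g(\zeta_i)}$ ultraweakly. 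The main obstacle will be converting this ultraweak convergence of the coefficients into the norm-strict convergence required by the QAP. I expect this to be handled by one more Hahn--Banach / convex-combination step, very much in the spirit of the passage from Exel--Ng data to QAP data in the proof of Theorem~\ref{Thm:AP}: given compact $K \subset G$, finite $F \subset (A)_1$, and $\ve > 0$, one extracts from the net above a single $\eta \in (C_c(G, A))_1$ (as a convex combination of finitely many approximants) simultaneously achieving $\ip{\eta, \tilde\alpha_g(\eta)} a \approx_\ve a$ in norm for all $g \in K$, $a \in F$, together with $\|[\eta, a]\| < \ve$. A diagonal indexing over $(K, F, \ve)$ then produces the desired QAP witness.
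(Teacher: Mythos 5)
Your overall architecture (the cycle through (2) and (3), the use of Theorem~\ref{Thm:BC}, Kaplansky density of $(C_c(G,A))_1$ in $(C_c(G,A''_\alpha))_1$) matches the paper, and your treatments of (3)$\Leftrightarrow$(2) and of (1)$\Rightarrow$(2)/(3) are essentially sound. The gap is in the heart of the argument, (2)$\Rightarrow$(1), and it is twofold. First, your intermediate claim that ultrastrong approximation of $\zeta_i\in C_c(G,\cZ(A''_\alpha))$ by $\eta\in(C_c(G,A))_1$ yields $\|(\zeta_i-\eta)a\|\to 0$ in \emph{norm} is false: ultrastrong convergence only controls the seminorms $\|\cdot\|_\vp$ for states $\vp$ that are \emph{normal on $A''_\alpha$}, i.e.\ $G$-continuous ones, whereas the QAP demands genuine norm (strict) estimates, i.e.\ uniform control over \emph{all} of $\mathfrak{S}(A)$. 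The paper bridges this by fixing an arbitrary $\vp\in\mathfrak{S}(A)$, averaging it over a small compact neighborhood $V$ of $e$ to obtain a $G$-continuous (hence normal) state $\vp_V$, extracting $\eta\in C_c(G,A)$ good for $\|\cdot\|_{\vp_V}$ via \cite{BC} plus Kaplansky, then locating a point $s\in V$ at which the same inequality holds for $\vp\circ\alpha_s$ and replacing $\eta$ by $s\ast\eta$. Without some such device your argument only produces a witness for the normal states of $A''_\alpha$, not for the norm topology of $A$.

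Second, your proposed ``one more Hahn--Banach / convex-combination step'' cannot close the argument as stated, because all the relevant quantities --- $\ip{\xi,\xi}$, $\ip{\xi,\tilde\alpha_g(\xi)}$, $\rQ([\xi,a])$ --- are \emph{quadratic} in $\xi$: a convex combination $\sum_i\lambda_i\xi_i$ of vectors does not realize the convex combination $\sum_i\lambda_i \rQ(\cdots\xi_i\cdots)$ of the data, and the cross terms are uncontrolled. The Hahn--Banach step must be applied to the tuples of quadratic quantities themselves (giving $\sum_i\lambda_i \rQ(\cdots\xi_i\cdots)\le\ve$ for some weights $\lambda_i$), and one then needs a separate mechanism to produce a \emph{single} vector achieving these values. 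The paper's mechanism is the translation trick: choose $t_1,\dots,t_n\in G$ so that the (suitably enlarged) supports of the $\xi_i(\cdot\,t_i)$ are pairwise disjoint and set $\xi(g):=\sum_i\lambda_i^{1/2}\Delta(t_i)^{1/2}\xi_i(gt_i)$; disjointness kills the cross terms and gives $\rQ(\cdots\xi\cdots)=\sum_i\lambda_i \rQ(\cdots\xi_i\cdots)$ exactly. This step uses non-compactness of $G$ (the compact case being trivial and disposed of first), and is combined with Lemma~\ref{Lem:conv} to upgrade pointwise to uniform-on-compacta convergence. These two devices are the genuinely new content of the theorem and are missing from your sketch.
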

\begin{proof}
As the statement is trivial for compact groups,
we may assume that $G$ is non-compact.

(3) $\Rightarrow$ (2): As $\cZ(A''_\alpha)$ is a $G$-invariant (unital) direct summand of $\cZ(A\sd)$,
this is clear (by Theorem 1.1 of \cite{BC}).

(2) $\Rightarrow$ (1): We first prove that the QAP is equivalent to the following formally weaker condition. 
To ease the notation, write $\rQ(\xi):=\ip{\xi,\xi}$ for $\xi\in L^2(G,A)$ and 
define for $\vp \in \mathfrak{S}(A)$ a seminorm $\| \cdot \|_\varphi$ 
on $L^2(G, A)$ by $\|\xi\|_{\vp}:=\vp(\rQ(\xi))^{1/2}$.

\begin{claim}
A $G$-\Cs-algebra $(A, \alpha)$ has the QAP if $($and only if$)$ it satisfies
the following condition:

\begin{enumerate}
\item[$(\bigstar)$] For every $\vp \in \mathfrak{S}(A)$, 
every finite subsets $E\subset A$, $F\subset P_c(G)$, 
and every $\ve>0$, there is $\xi\in C_c(G,A)$ that satisfies 
\[
\|\xi\| \le1,\quad \|\xi\|_{\vp}>1-\ve,\quad \max_{(a, f)\in E\times F}\|(\xi-f\ast \xi)a\|_{\vp}<\ve,\quad 
\max_{a\in E}\|[\xi,a]\|_{\vp}<\ve.
\]
\end{enumerate}
\end{claim}
\begin{proof}[Proof of the Claim]
Assume that $(A,\alpha)$ satisfies condition ($\bigstar$) in the Claim. 
Then, by the Hahn--Banach theorem, for every finite subsets $E\subset A$ and $F\subset P_\mathrm{c}(G)$,
the norm closed convex hull of the set
\[
\left\{ \left(( a^\ast (1-\rQ(\xi))a )_{a\in E}, (\rQ((\xi-f\ast \xi)a))_{(a, f)\in E\times F}, (\rQ([\xi,a]))_{a\in E}\right): \xi\in (L^2(G,A))_1\right\}
\] 
in $\ell^\infty(E \sqcup (E \times F) \sqcup E, A)$
contains $0$.
Hence one has functions $\xi_1, \ldots, \xi_n \in C_c(G, A)$ and $\lambda_1, \ldots, \lambda_n \in [0, 1]$ with $\sum_{i=1}^n \lambda_i =1$
satisfying
\[\sum_{i=1}^n \lambda_i Q(\xi_i) \leq 1,\quad
\sum_{i=1}^n \lambda_i a^\ast(1- Q(\xi_i))a \leq \ve, \quad
\sum_{i=1}^n \lambda_i Q((\xi_i -f\ast \xi_i)a)\leq \ve, \quad
\sum_{i=1}^n \lambda_i Q([\xi_i, a]) \leq \ve\]
for all $a\in E$, $f\in F$.
Put $L:= \{e\} \cup [\bigcup_{f\in F} \supp(f)]$.
Take $t_1, \ldots, t_n \in G$ satisfying
\[[L \cdot \supp(\xi_i)\cdot t_i^{-1}] \cap [L \cdot \supp(\xi_j)\cdot t_j^{-1} ]= \emptyset\]
 for all $i \neq j$.
Define $\xi\in C_c(G, A)$ to be
\[\xi(g):= \sum_{i=1}^n \lambda_i^{1/2}\Delta(t_i)^{1/2} \xi_i(g t_i),~ g\in G.\]
Then, by routine calculations, for any $a\in E$ and $f\in F$, we have
\[Q(\xi) \leq 1,\quad a^\ast(1- Q(\xi))a \leq \ve,\quad
Q((\xi-f\ast \xi)a)\leq \ve,\quad Q([\xi, a]) \leq \ve.\]
Indeed one has
$Q(\xi)= \sum_{i=1}^n \lambda_i Q(\xi_i)$,
and similar for the other three.
By Lemma \ref{Lem:conv}, this shows the QAP of $\alpha$.
\end{proof}

Now assume $(A, \alpha)$ is amenable.
We verify that $(A, \alpha)$ satisfies condition ($\bigstar$) in the Claim.
Let $\varphi \in \mathfrak{S}(A)$, finite subsets $E\subset (A)_1$, 
$F\subset P_\mathrm{c}(G)$, and $\ve>0$ be given.
Take a compact neighborhood $V$ of $e\in G$ satisfying
\[\|a-\alpha_s(a)\|<\ve,\quad 
\|f-s^{-1} \ast f \ast s \|_1<\ve\]
for all $s\in V$, $a\in E$, $f\in F$.
Define $\vp_V \in \mathfrak{S}(A)$ to be
\[\vp_V(a):=\frac{1}{m(V)}\int_V\vp(\alpha_s (a))\,dm(s),\quad a\in A.\] 
Note that by Lemma \ref{Lem:contist}, $\vp_V$ defines a normal state on $A''_\alpha$.
Then, as $(A''_\alpha, \alpha)$ is amenable, by Theorem 1.1 of \cite{BC} and Kaplansky's density theorem (see Section \ref{Subsec:Wst}),
one can choose $\eta\in C_c(G, A)$ satisfying
\[\|\eta\|\le1,\]
\[1-\|\eta\|_{\vp_V}^2
 +\sum_{f\in F}\|\eta-f \ast \eta\|_{\vp_V}^2 
 +\sum_{a\in E}\|[\eta,a]\|_{\vp_V}^2
 < \ve^2.
\]
Hence one can find $s\in V$ satisfying the same inequality 
but $\vp_V$ being replaced by $\vp \circ \alpha_s$. 
Then, putting $\xi:=s\ast \eta \in C_c(G,A)$, one has 
\[\|\xi\|\le1 ,\quad \|\xi\|_{\vp} = \| \eta\|_{\vp \circ \alpha_s}> 1- \ve,\] 
\begin{align*}\|\xi-f \ast \xi\|_{\vp} &=\|\eta-s^{-1}\ast f \ast s \ast \eta\|_{\vp \circ \alpha_s} \\
&\leq \| (s^{-1}\ast f \ast s - f) \ast \eta\| + \|\eta-f \ast \eta\|_{\vp \circ \alpha_s}\\ &<2\ve \quad \quad {\rm for~}f\in F,
\end{align*}
\[\|[\xi,a]\|_{\vp} = \|[\eta, \alpha_s^{-1}(a)]\|_{\vp \circ \alpha_s} <3\ve \quad {\rm for~} a\in E.\]
This shows that $(A, \alpha)$ satisfies condition $(\bigstar)$ in the Claim. 

(1) $\Rightarrow$ (3): Assume that $\alpha$ has the QAP.
Take a net $(\xi_i)_{i\in I}$ in $C_c(G, A)$ as in the definition of the QAP.
For $\xi, \eta \in C_c(G, A)$,
we define a bounded linear map
\[\Theta_{\xi, \eta, \ast} \colon A^\ast \rightarrow L^1(G, \cZ(A\sd)_\ast)\]
to be 
\[[[\Theta_{\xi, \eta, \ast}(\varphi)](g)](x):= \varphi (\xi(g)^\ast x \cdot \eta(g))\quad{\rm~for~}\varphi \in A^\ast,~x\in \cZ(A\sd),~ g\in G.\]
Via the canonical identification
\[L^1(G, \cZ(A\sd)_\ast) = (L^\infty(G)\btimes \cZ(A\sd))_\ast\]
(\cite{Tak1}, Chapter 4, Theorem 7.17), the dual map defines
a normal bounded map
\[\Theta_{\xi, \eta} \colon L^\infty(G)\btimes \cZ(A\sd) \rightarrow A\sd.\]
For $F\in L^\infty(G) \odot \cZ(A\sd)$,
direct calculations show that
\[\Theta_{\xi, \eta}(F) =\int_G \xi(g)^\ast F(g) \eta(g)\,dm(g).\]
Then, by the Cauchy--Schwarz inequality,
we have
\[\|\Theta_{\xi, \eta}(F)\| \leq \|\xi\| \|\eta\| \|F\|.\]
Since $\Theta_{\xi, \eta}$ is normal, Kaplansky's density theorem yields
\[\|\Theta_{\xi, \eta}\| \leq \|\xi\| \|\eta\|.\]
A similar argument shows that, when $\eta= \xi$,
the map $\Theta_{\xi, \xi}$ is (completely) positive.

Now take a net $(\xi_i)_{i\in I}$ which witnesses the QAP of $\alpha$.
For each $i\in I$, put \[\Phi_i:= \Theta_{\xi_i, \xi_i} \colon L^\infty(G)\btimes \cZ(A\sd) \rightarrow A\sd.\]
Let $\Phi$ be a point-ultraweak cluster point of the net $(\Phi_i)_{i\in I}$.
We show that $\Phi$ is a $G$-conditional expectation
$L^\infty(G)\btimes \cZ(A\sd) \rightarrow \cZ(A\sd)$.
For any $a\in A$ and $F\in L^\infty(G) \odot \cZ(A\sd)$,
we have
\[[\Phi_i(F), a]= \Theta_{\xi_i, \xi_i a}(F) - \Theta_{ \xi_i a^\ast, \xi_i}(F).\]
By the normality of the appearing maps, the equation is still valid for all $F\in L^\infty(G) \btimes \cZ(A\sd)$.
As $\|[\xi_i, a]\|, \|[\xi_i, a^\ast]\| \rightarrow 0$, we have
\[\|\Theta_{\xi_i, a\xi_i} - \Theta_{\xi_i, \xi_i a}\| \rightarrow 0,\quad
\|\Theta_{ \xi_i a^\ast, \xi_i} - \Theta_{a^\ast \xi_i, \xi_i} \| \rightarrow 0.\]
Note also that $\Theta_{a^\ast \xi_i, \xi_i}=\Theta_{\xi_i, a \xi_i}$.
Thus $\|[\Phi_i(F), a]\| \rightarrow 0$ for $F\in L^\infty(G)\btimes \cZ(A\sd)$ and $a\in A$.
Hence $\Phi$ maps into $\cZ(A\sd)$.
For $z\in \cZ(A\sd)$, we have
$\Phi_i(z)= z \ip{\xi_i, \xi_i} \rightarrow z$.
Hence $\Phi \colon L^\infty(G)\btimes \cZ(A\sd) \rightarrow \cZ(A\sd)$ is a conditional expectation.

Finally, let $s\in G$ be given.
For $F\in L^\infty(G) \odot \cZ(A\sd)$, $a\in (A)_1$, and $i\in I$,
we have
\begin{align*}
a^\ast \alpha_s(\Phi_i(\tilde{\alpha}_s^{-1}(F)))a
&= \int_G a^\ast \alpha_s(\xi_i(g))^\ast \alpha_s(\alpha_s^{-1}F(sg)) \alpha_s(\xi_i(g))a\,dm(g) \\
&=\int_G a^\ast \alpha_s(\xi_i(s^{-1}g))^\ast F(g) \alpha_s(\xi_i(s^{-1}g))a\,dm(g)\\
&=\Theta_{\tilde{\alpha}_s(\xi_i)a, \tilde{\alpha}_s(\xi_i)a}(F),
\end{align*}
while
\[a^\ast \Phi_i(F) a = \Theta_{\xi_i a, \xi_i a}.\] 
Thus, by Kaplansly's density theorem,
\[\|a^\ast [\Phi_i(F) - (\alpha_s\circ \Phi_i \circ \tilde{\alpha}_s^{-1})(F)]a\|
\leq 2\|(\tilde{\alpha}_{s}(\xi_i) - \xi_i) a \| \| F\|\]
for all $F\in L^\infty(G)\btimes \cZ(A\sd)$ and $a\in (A)_1$.
This shows that $\Phi$ is $G$-equivariant.
\end{proof}
\begin{Rem}
The proof of Theorem \ref{Thm:QAP} shows that
the uniform convergence condition in the definition of the QAP
can be relaxed to the pointwise one.
\end{Rem}
The following is a consequence of Theorem \ref{Thm:QAP}.
For exact groups, the statement is shown in \cite{BEW2}.
\begin{Cor}\label{Cor:closed}
Let $H$ be a closed subgroup of $G$.
Let $\alpha \colon G \curvearrowright A$ be an amenable \Cs-dynamical system.
Then the restriction action $\alpha|_H \colon H \curvearrowright A$ is also amenable.
\end{Cor}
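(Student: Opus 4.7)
The plan is to apply Theorem~\ref{Thm:QAP}(3) in both directions. The amenability of $\alpha$ supplies a $G$-conditional expectation $E \colon L^\infty(G) \btimes \cZ(A\sd) \to \cZ(A\sd)$, and in order to conclude via the same theorem that $\alpha|_H$ is amenable it suffices to construct an $H$-conditional expectation $L^\infty(H) \btimes \cZ(A\sd) \to \cZ(A\sd)$. I would obtain one by composing $E$ with a suitable $H$-equivariant normal unital c.p.\ map $\iota$ from $L^\infty(H) \btimes \cZ(A\sd)$ into $L^\infty(G) \btimes \cZ(A\sd)$ that is the identity on $\cZ(A\sd)$.

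The key step is to construct an $H$-equivariant normal u.c.p. map $\iota \colon L^\infty(H) \to L^\infty(G)$ with $\iota(1) = 1$, where both sides carry the left translation action of $H$. I would take a Bruhat function $\beta \colon G \to [0,\infty)$ --- a continuous function whose support meets $HK$ in a compact set for every compact $K \subset G$, normalized so that $\int_H \beta(h^{-1}g)\, dm_H(h) = 1$ for all $g \in G$. Such a function exists for any closed subgroup of any locally compact group by Bruhat's classical construction, so no separability or $\sigma$-compactness hypothesis on $G$ is needed. Setting
\[ [\iota(F)](g) := \int_H F(h)\,\beta(h^{-1}g)\, dm_H(h), \]
the left-invariance of the Haar measure $m_H$ yields the $H$-equivariance of $\iota$, and the normalization of $\beta$ gives $\iota(1) = 1$; normality and positivity are immediate from the integral definition, and since $L^\infty(H)$ is commutative, positivity implies complete positivity.

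Tensoring $\iota$ with the identity on $\cZ(A\sd)$ yields a normal $H$-equivariant u.c.p. map $L^\infty(H) \btimes \cZ(A\sd) \to L^\infty(G) \btimes \cZ(A\sd)$ restricting to the identity on $\cZ(A\sd)$ (with respect to the diagonal $H$-actions on both sides), and the composition with $E$ produces the required $H$-conditional expectation. I expect the only delicate point to be the correct handling of the modular functions $\Delta_G, \Delta_H$ in the normalization of $\beta$ and in the change-of-variable computation establishing $H$-equivariance; once that bookkeeping is done, the rest is routine.
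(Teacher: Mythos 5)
Your proposal is correct, and it proves the corollary by a genuinely different route than the paper at the key technical step. Both arguments have the same skeleton: take the $G$-conditional expectation $L^\infty(G)\btimes\cZ(A\sd)\to\cZ(A\sd)$ furnished by Theorem~\ref{Thm:QAP}(3), precompose with a normal, unital, $H$-equivariant c.p.\ map $L^\infty(H)\btimes\cZ(A\sd)\to L^\infty(G)\btimes\cZ(A\sd)$ that is the identity on $\cZ(A\sd)$, and invoke Theorem~\ref{Thm:QAP} again for the $H$-system. The difference is how that map $L^\infty(H)\to L^\infty(G)$ is produced. The paper first reduces to compactly generated $G$ (via open subgroups), applies Kakutani--Kodaira to pass to a second countable quotient $\bar G=G/K$, uses a Borel cross-section of $\bar H\backslash\bar G$ to get a normal unital $\bar H$-embedding $L^\infty(\bar H)\to L^\infty(\bar G)$, and patches this together with averaging over $K\cap H$. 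You instead build the map in one stroke from a Bruhat function $\beta$, which exists for any closed subgroup of any locally compact group, so no second countability, cross-sections, or reduction steps are needed; moreover, since only left invariance of $m_H$ enters the equivariance computation, the modular-function bookkeeping you worry about is in fact vacuous (the modular data only appear in the standard construction and normalization of $\beta$ itself). The one point you wave at --- normality of $\iota$ --- is not ``immediate from the integral definition'' but is routine: define the pre-adjoint $T\colon L^1(G)\to L^1(H)$, $T\phi(h)=\int_G\beta(h^{-1}g)\phi(g)\,dm(g)$, note $\|T\phi\|_1\le\|\phi\|_1$ by Tonelli and the normalization of $\beta$, and set $\iota=T^\ast$; this also settles measurability of $\iota(F)$ for general $F\in L^\infty(H)$. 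What the paper's route buys is that it leans only on standard disintegration facts already cited (Folland, p.~65) and on Proposition~\ref{Prop:perm}-style reductions; what yours buys is a shorter, uniform argument avoiding selection theory entirely.
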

\begin{proof}
The statement is clear for \emph{open} subgroups.
Hence it suffices to show the statement for compactly generated $G$.
In this case, by the Kakutani--Kodaira theorem \cite{KK},
one can find a compact normal subgroup $K \lhd G$
whose quotient group $G/K$ is second countable.
Put
\[\bar{H}:= H/(K \cap H),\ \bar{G}:= G/K.\]
Then as explained in page 65 in \cite{Fol},
one has a measure-class preserving Borel isomorphism
$\bar{H} \times (\bar{H} \backslash \bar{G}) \rightarrow \bar{G}$ of the form
\[\bar{H} \times (\bar{H} \backslash \bar{G}) \ni (\bar{h}, x) \mapsto \bar{h} \cdot s(x) \in \bar{G}.\]
This gives rise to a normal unital $\bar{H}$-embedding
\[\Phi\colon L^\infty(\bar{H}) \rightarrow L^\infty(\bar{G}).\]
Let $\Psi \colon L^\infty(H) \rightarrow L^\infty(\bar{H})$ denote
the $H$-conditional expectation given by averaging by the right $(K\cap H)$-action with respect to the Haar measure on $K\cap H$.
Let $\iota \colon L^\infty(\bar{G}) \rightarrow L^\infty(G)$ be the obvious $G$-embedding.
By Theorem \ref{Thm:QAP},
one has a $G$-conditional expectation
\[\Theta \colon L^\infty(G) \btimes \cZ(A\sd )\rightarrow \cZ(A\sd).\]
The composite
\[\Theta \circ [(\iota \circ \Phi \circ \Psi) \btimes \id_{\cZ(A\sd)}] \colon L^\infty(H) \btimes \cZ(A\sd )\rightarrow \cZ(A\sd)\]
gives an $H$-conditional expectation.
\end{proof}

The following proposition gives a useful obstruction of amenability of \Cs-dynamical systems. Note that the statement is shown in \cite{BEW2} for discrete groups.
\begin{Prop}
Let $(A, \alpha)$ be a $G$-\Cs-algebra.
If $\alpha$ is amenable, then the associated action $G \curvearrowright \mathfrak{S}(A)$ is amenable.
\end{Prop}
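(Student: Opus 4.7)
The plan is to construct the approximating maps $m_i \colon \mathfrak{S}(A)\to\mathrm{Prob}(G)$ directly from a QAP net for $\alpha$, so that approximate equivariance of the $m_i$ is inherited from approximate $G$-invariance of the QAP vectors. Invoking Theorem~\ref{Thm:QAP} together with the Remark following it, I would first fix a net $(\xi_i)_{i\in I}\subset C_c(G,A)$ with $\|\xi_i\|\le 1$, with $\langle\xi_i,\xi_i\rangle\to 1$ strictly in $A$, and with $\|\tilde{\alpha}_g(\xi_i)-\xi_i\|\to 0$ uniformly on compact subsets of $G$.

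For each $\varphi\in\mathfrak{S}(A)$, the prescription
\[\mu_i^{\varphi}(f):=\varphi(\langle\xi_i,f\xi_i\rangle)=\int_G f(h)\,\varphi(\xi_i(h)^\ast\xi_i(h))\,dm(h),\quad f\in C_c(G),\]
defines a positive Radon measure on $G$ with total mass $\varphi(\langle\xi_i,\xi_i\rangle)\to 1$, and I set $m_i(\varphi)\in\mathrm{Prob}(G)$ to be its normalization (modifying $m_i$ arbitrarily on the eventually negligible set where this mass is small). Since for each $f\in C_c(G)$ the scalar $\mu_i^{\varphi}(f)$ is just the evaluation of $\varphi$ at the fixed element $\langle\xi_i,f\xi_i\rangle\in A$, the map $m_i$ is weak-$\ast$ continuous by construction.

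The main step is to verify approximate equivariance. Using left invariance of Haar measure and the identity $\xi_i(g^{-1}h)=\alpha_g^{-1}(\tilde{\alpha}_g(\xi_i)(h))$, a direct computation gives
\[(g.\mu_i^{\varphi})(f)-\mu_i^{g.\varphi}(f)=(g.\varphi)\bigl(\langle\xi_i,f\xi_i\rangle-\langle\tilde{\alpha}_g(\xi_i),f\tilde{\alpha}_g(\xi_i)\rangle\bigr),\]
and polarization combined with the Hilbert module Cauchy--Schwarz inequality bounds the right-hand side by $2\|f\|_\infty\|\xi_i\|\|\tilde{\alpha}_g(\xi_i)-\xi_i\|$. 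Crucially this estimate is independent of $\varphi$, so the QAP hypothesis directly yields $\|m_i(g.\varphi)-g.m_i(\varphi)\|\to 0$ uniformly on compact subsets of $G\times\mathfrak{S}(A)$, which is the required amenability of $G\curvearrowright\mathfrak{S}(A)$.

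The main obstacle I anticipate is the normalization step: when $A$ is non-unital, $\varphi\mapsto\varphi(\langle\xi_i,\xi_i\rangle)$ converges to $1$ only pointwise, so one cannot normalize uniformly over $\mathfrak{S}(A)$ in a single pass. A clean workaround is to replace $\mathfrak{S}(A)$ by the weak-$\ast$ compact quasi-state space, or equivalently to lift the problem to the unitization $\widetilde{A}$ (which remains amenable), where strict convergence on the identity reduces to norm convergence and the normalization becomes trivial.
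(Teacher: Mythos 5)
Your construction and main estimate are essentially sound: taking a net as in the Remark after Theorem~\ref{Thm:AP} (so that $\|\tilde{\alpha}_g(\xi_i)-\xi_i\|\to0$ uniformly on compacta, available by Theorem~\ref{Thm:QAP}), the unnormalized measures $\mu_i^\varphi$ are weak-$\ast$ continuous in $\varphi$ and satisfy $\|g.\mu_i^{\varphi}-\mu_i^{g.\varphi}\|\le 2\|\tilde{\alpha}_g(\xi_i)-\xi_i\|$ uniformly in $\varphi$. The genuine gap is the normalization step, and both workarounds you propose fail. The state space of the unitization $\widetilde{A}$ is canonically the quasi-state space $Q(A)$, and it contains the $G$-fixed point $0$ (the character of $\widetilde{A}$ vanishing on $A$); an amenable action on a space with a $G$-fixed point forces $G$ to be amenable (evaluate the maps $m_i$ at the fixed point to get an approximately invariant net in $\mathrm{Prob}(G)$). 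Likewise $G\curvearrowright\widetilde{A}$ is \emph{not} amenable unless $G$ is amenable: by Proposition~\ref{Prop:perm}, amenability passes to the $G$-quotient $\widetilde{A}/A\cong\IC$ with the trivial action, whose amenability means exactly that $L^\infty(G)$ has an invariant mean. So ``$\widetilde{A}$ remains amenable'' is false in the only interesting case, and ``replace $\mathfrak{S}(A)$ by $Q(A)$'' asks you to prove a statement that is generally false. Also, ``modify $m_i$ arbitrarily where the mass is small'' is not available: $m_i$ must be globally continuous with values in $\mathrm{Prob}(G)$, and for fixed $i$ the set of states nearly annihilating $\langle\xi_i,\xi_i\rangle$ is not small in any useful sense.

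The repair stays inside your framework but uses the fact that amenability only requires uniform convergence on \emph{compact} subsets of $G\times\mathfrak{S}(A)$. On a compact $K\subset\mathfrak{S}(A)$, Dini's theorem applied to a ($G$-)approximate unit (Lemma~\ref{Lem:qa}) together with the strict convergence $\langle\xi_i,\xi_i\rangle\to1$ and Cauchy--Schwarz gives $\varphi(\langle\xi_i,\xi_i\rangle)\to1$ uniformly on $K$; then, instead of dividing, set $m_i(\varphi):=\mu_i^\varphi+\bigl(1-\varphi(\langle\xi_i,\xi_i\rangle)\bigr)\nu_0$ for a fixed $\nu_0\in\mathrm{Prob}(G)$, which is everywhere defined, continuous, probability-valued, and on compacta differs from $\mu_i^\varphi$ by an eventually uniformly small total-variation error, so your equivariance estimate survives. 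This uniform-on-compacta use of Dini is precisely what the paper's shorter proof encodes in the non-degeneracy of the $G$-c.c.p.\ map $\Phi\colon A\to C_0(\mathfrak{S}(A))$, $\Phi(a)(\varphi)=\varphi(a)$: there one pushes the positive type functions $g\mapsto\langle\xi_i,\tilde{\alpha}_g(\xi_i)\rangle$ forward by $\Phi$ and invokes Proposition~2.5 of \cite{AD02} rather than building the maps $m_i$ by hand.
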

\begin{proof}
Define a $G$-c.c.p.~ map $\Phi \colon A \rightarrow C_0(\mathfrak{S}(A))$ to be
 \[[\Phi(a)](\varphi):=\varphi(a) \quad {\rm for}~ a\in A, \varphi \in \mathfrak{S}(A).\]
Then by Dini's theorem, $\Phi$ is non-degenerate.
Take a net $(\xi_i)_{i \in I} \subset C_c(G, A)$
which witnesses the QAP of $\alpha$.
Since $\Phi$ is a non-degenerate $G$-c.c.p.~ map, the functions
\[k_i(g):= \Phi(\langle \xi_i, \tilde{\alpha}_g(\xi_i) \rangle);\quad g\in G,~ i\in I\]
in $C_c(G, C_0(\mathfrak{S}(A)))$ witness the amenability of $G \curvearrowright \mathfrak{S}(A)$ (\cite{AD02}, Proposition 2.5).
\end{proof}
We conclude the non-commutative analogue of Theorem 7.2 in \cite{AD02}.
\begin{Cor}
If $G$ admits an amenable action on a unital \Cs-algebra,
then $G$ is exact.
\end{Cor}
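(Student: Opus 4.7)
The plan is to chain together the two facts immediately preceding the corollary: the preceding proposition produces an amenable action of $G$ on the state space $\mathfrak{S}(A)$, and Proposition \ref{Prop:exact} says that amenability at infinity is equivalent to exactness. The only extra ingredient required is compactness of $\mathfrak{S}(A)$, which is where the unitality assumption enters.

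More concretely, I would proceed as follows. Let $\alpha\colon G\acts A$ be the given amenable action on a unital \Cs-algebra $A$. Because $A$ is unital, $\mathfrak{S}(A)$ is a weak-$\ast$ closed subset of the unit ball of $A^\ast$, hence weak-$\ast$ compact by the Banach--Alaoglu theorem. The induced affine action $G\acts\mathfrak{S}(A)$ is continuous since $\alpha$ is point-norm continuous. By the previous proposition, this action on the compact space $\mathfrak{S}(A)$ is amenable. Thus $G$ admits an amenable action on a compact space, i.e., $G$ is amenable at infinity in the sense of Definition following Proposition \ref{Prop:exact}. Proposition \ref{Prop:exact} then immediately gives that $G$ is exact.

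There really is no serious obstacle here: the proof is a two-line combination of the preceding proposition with Proposition \ref{Prop:exact}. The only point one might want to double check is that the $G$-action on $\mathfrak{S}(A)$ is indeed continuous with respect to the weak-$\ast$ topology (equivalently, that the state space is a genuine $G$-space in the sense used in the paper), which follows from point-norm continuity of $\alpha$ together with the uniform boundedness of states on $A$.
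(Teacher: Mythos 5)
Your proof is correct and is exactly the argument the paper intends: the corollary follows immediately from the preceding proposition (amenability of $G\curvearrowright\mathfrak{S}(A)$), the weak-$\ast$ compactness of $\mathfrak{S}(A)$ for unital $A$, and the implication ``amenable at infinity $\Rightarrow$ exact'' contained in Proposition \ref{Prop:exact}. No further comment is needed.
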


For later use, we record the following permanence properties of amenability of \Cs-dynamics
from \cite{BEW2}. 
They also follow rather immediately from the characterization (3) in 
Theorem~\ref{Thm:QAP} (togather with Remark \ref{Rem:tensor} below), except that the first one by the QAP. 
For the last statement, note that for any amenable normal closed subgroup $N \triangleleft G$, 
the natural embedding of $L^\infty(G/N)$ into $L^\infty(G)$ 
admits a $G$-equivariant conditional expectation. 
\begin{Prop}\label{Prop:perm}
Amenability of $G$-\Cs-algebras is inherited to
$G$-inductive limits, $G$-hereditary \Cs-subalgebras, $G$-extensions, $G$-quotients, 
and the range of $G$-conditional expectations. 

For a \Cs-dynamical system $\alpha\colon G\curvearrowright A$ 
and an amenable normal closed subgroup $N\triangleleft G$ with $N\subset\ker\alpha$, 
the induced action $G/N \curvearrowright A$ is amenable if and only if $\alpha$ is amenable. 
\end{Prop}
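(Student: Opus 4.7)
The plan is to handle each assertion via the characterization of amenability in Theorem~\ref{Thm:QAP} best suited to it: the QAP (condition~(1)) for inductive limits, the center-valued c.e.~characterization~(3) for extensions, quotients, and hereditary subalgebras, and amenability of the enveloping \Ws-system (condition~(2)) for the range of a $G$-c.e. The $G/N$-statement is handled separately via a canonical c.e./embedding involving $L^\infty(G/N)$ and $L^\infty(G)$.

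For $G$-extensions, $G$-quotients, and $G$-hereditary \Cs-subalgebras, the key is to translate $G$-invariant structures in $A$ into $G$-invariant projections in $A\sd$. For a $G$-invariant ideal $I\subset A$, the central support projection $p\in\cZ(A\sd)$ is $G$-invariant and satisfies $I\sd = pA\sd$, $(A/I)\sd=(1-p)A\sd$, yielding a $G$-equivariant direct-sum decomposition $\cZ(A\sd)=\cZ(I\sd)\oplus\cZ((A/I)\sd)$. Cutting a $G$-c.e.~for $\cZ(A\sd)$ down by $1\otimes(1-p)$ (using $\cZ(A\sd)$-bimodularity) yields a $G$-c.e.~for $\cZ((A/I)\sd)$, handling the quotient case; conversely, $G$-c.e.s for $\cZ(I\sd)$ and $\cZ((A/I)\sd)$ assemble along this decomposition to give one for $\cZ(A\sd)$, handling extensions. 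For a $G$-invariant hereditary \Cs-subalgebra $B\subset A$ with support projection $e\in A\sd$ (automatically $G$-invariant) and central support $z(e)\in\cZ(A\sd)$, the canonical $G$-equivariant identification $\cZ(B\sd)=\cZ(eA\sd e)\cong z(e)\cZ(A\sd)$ allows the same cutdown to produce the required $G$-c.e.~for $B$.

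For the range of a non-degenerate $G$-c.e.~$E\colon A\to B$, I will first use Lemma~\ref{Lem:extension} to extend $E$ to a normal $G$-c.e.~$E''\colon A''_\alpha\to B''_{\alpha|_B}$; next, observing that $\vp\mapsto\vp\circ E$ carries $G$-continuous states on $B$ to $G$-continuous states on $A$, Lemma~\ref{Lem:contist} produces a $G$-embedding $\iota\colon B''_{\alpha|_B}\hookrightarrow A''_\alpha$ compatible with $E''$. Composing a $G$-c.e.~$\Phi\colon L^\infty(G)\btimes A''_\alpha\to A''_\alpha$ (provided by condition~(2)) as $E''\circ\Phi\circ(\id\btimes\iota)$ yields the required $G$-c.e.~on $L^\infty(G)\btimes B''_{\alpha|_B}$. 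For $G$-inductive limits $A=\overline{\bigcup_{i} A_i}$, given $\ve>0$, a finite set $F\subset(A)_1$, and a compact $K\subset G$, I would approximate $F$ by a finite subset of some $A_j$ and invoke the QAP of $A_j$ inside $L^2(G,A_j)\subset L^2(G,A)$; standard triangle-inequality estimates then yield an approximate QAP witness in $L^2(G,A)$.

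For the final statement on an amenable normal closed subgroup $N\triangleleft G$ with $N\subset\ker\alpha$, the forward direction uses the obvious $G$-embedding $L^\infty(G/N)\hookrightarrow L^\infty(G)$, while the converse uses the $G$-c.e.~$P\colon L^\infty(G)\to L^\infty(G/N)$ given by averaging along $N$-orbits against an $N$-invariant mean (which exists by amenability of $N$); $G$-equivariance of $P$ follows from normality of $N$. The main technical obstacle is the verification of the $G$-embedding $\iota$ and its compatibility with $E''$ in the range-of-c.e.~case; the remaining steps reduce to bookkeeping with central projections, cutdowns on $L^\infty(G)\btimes\cZ(A\sd)$, and standard approximation arguments.
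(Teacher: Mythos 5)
Your proposal is correct and follows essentially the same route as the paper's (very terse) proof: characterization (3) of Theorem~\ref{Thm:QAP} plus $G$-invariant central projections in $A\sd$ for quotients, extensions, and hereditary subalgebras; the QAP for inductive limits; and the $G$-equivariant conditional expectation $L^\infty(G)\to L^\infty(G/N)$ for the last assertion. The one genuine variation is your treatment of the range of a $G$-conditional expectation: you work with characterization (2) and the enveloping algebra $A''_\alpha$, extending $E$ and the inclusion $\iota$ to normal maps via Lemma~\ref{Lem:extension} and Lemma~\ref{Lem:contist} and forming $E''\circ\Phi\circ(\id\btimes\iota'')$, whereas the paper routes everything except inductive limits through the second dual; your version has the advantage that all tensor extensions $\id\btimes\iota''$ involve \emph{normal} maps (note only that $\iota$ need not be non-degenerate in the sense of Lemma~\ref{Lem:extension}, so $\iota''$ should be obtained directly from the predual restriction as you indicate, and the composite is a conditional expectation by $E''\circ\iota''=\id$ and Tomiyama). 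Two small points to make explicit: in the $G/N$ direction the averaging map $P\colon L^\infty(G)\to L^\infty(G/N)$ is not normal, so forming $P\btimes\id_{\cZ(A\sd)}$ requires Remark~\ref{Rem:tensor} (which the paper cites for exactly this purpose); and in the converse direction one must also observe that, since $N\subset\ker\alpha$, the restricted expectation $L^\infty(G/N)\btimes\cZ(A\sd)\to\cZ(A\sd)$ genuinely descends to a $G/N$-equivariant map.
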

\begin{Rem}\label{Rem:tensor}
Here we recall the following general fact about c.c.p.~ maps
and von Neumann algebraic tensor product.
Let $\vp\colon M_1\to M_2$ be a
c.c.p.~ map
between von Neumann algebras and $N$ be another von Neumann algebra.
It is well-known that, when $\vp$ is normal, the (algebraic) tensor product map
$\vp\odot \id_N \colon M_1 \odot N \rightarrow M_2 \odot N$
uniquely extends to a normal c.c.p.~ map
\[\vp\btimes\id_N \colon M_1 \btimes N \rightarrow M_2 \btimes N.\]
Maybe less well-known is that, even when $\vp$ is not normal, there is a
c.c.p.~ map $\vp\btimes\id_N\colon M_1\btimes N\to
M_2\btimes N$ that
is uniquely determined by the condition
\[
(\id_{M_2}\btimes
\psi)(\vp\btimes\id_N)=(\vp\otimes\id_{\IC})(\id_{M_1}\btimes \psi)
\colon M_1\btimes N \to M_2 \otimes \IC\cong M_2
\]
for every $\psi\in N_\ast$.
This follows from the identification $M_i \btimes N=\mathrm{CB}(N_\ast, M_i)$ as an
operator space.
Alternatively, in the case where $N=\IB(\ell^2(I))$, this follows from
the identification
of $M\btimes\IB(\ell^2(I))$ and the space of $M$-valued $I\times I$
matrices whose
finite submatrices have uniformly bounded norms.
For a general von Neumann algebra $N\subset\IB(\ell^2(I))$, the map
$\vp\btimes\id_{\IB(\ell^2(I))}$
restricts to the desired map, thanks to Tomita's tensor commutant theorem.

Now let $M_3$, $N_1$, $N_2$ be other von Neumann algebras.
It is not difficult to see from the above condition that
\[
(\vp\btimes\id_{N_2})(\id_{M_1}\btimes \theta)=(\id_{M_2}\btimes
\theta)(\vp\btimes\id_{N_1})
\]
for any \emph{normal} c.c.p.~ map $\theta\colon N_1\to N_2$.
It is also clear that
\[(\vp_2\btimes\id_N)\circ (\vp_1\btimes\id_N)=(\vp_2\circ \vp_1)\btimes\id_N\]
for \emph{any} c.c.p.~ maps $\vp_i \colon M_i \rightarrow M_{i+1}$; $i=1, 2$.
Thus when $M_1, M_2, N$ are $G$-\Ws-algebras
and $\varphi \colon M_1 \rightarrow M_2$ is a $G$-c.c.p.~ map,
the map $\varphi \btimes \id_N$ is $G$-equivariant with respect to the diagonal actions.
\end{Rem}

\section{Characterizations through central sequence algebras}
In the context of classification/structure theory of operator algebras,
it is more natural and desirable to characterize a property
of non-commutative dynamical systems via the (relative) central sequence algebras.
This approach appears in Connes's successful classification of automorphisms on injective factors \cite{Con}.

For the QAP, such (formally stronger) reformulations are
obtained in \cite{Suz20c}, Theorem C, in some important cases.
These properties in fact play fundamental roles in the classification
of amenable \Cs-dynamical systems \cite{Suz20c}.
Here we establish such reformulations for general exact group \Cs-dynamics.

In the usage of central sequence algebras, it is natural to restrict
the attentions to second countable groups and separable \Cs-algebras.
Therefore, in this section, we only consider such cases.

\subsection*{Kirchberg's central sequence algebra}
Here we recall Kirchberg's (relative) central sequence algebras \cite{Kircent}.
Let $A$ be a \Cs-algebra.
Denote by $A^\infty$ the limit algebra of $A$ on $\mathbb{N}$:
\[A^\infty:= \ell^\infty(\IN, A) / c_0(\IN, A).\]
Set $A_\infty := A^\infty \cap A'$.
Here we regard $A$ as a \Cs-subalgebra of $A^\infty$
by the diagonal embedding.
Let $B \subset A^\infty$ be a \Cs-subalgebra.
Then define
\[\mathrm{Ann}(A^\infty, B):=\{ x \in A^\infty : x \cdot B = 0 = B \cdot x \}.\]
Observe that $\mathrm{Ann}(A^\infty, B)$ is an ideal of $A^\infty \cap B'$.
We set
\[F_{\infty}(B, A) := (A^\infty \cap B')/\mathrm{Ann}(A^\infty, B).\]
For a sequence $(x_n)_{n=1}^\infty$ in the preimage of $A^\infty \cap B'$,
we denote by $[x_n]_n^\infty$ its image in $F_\infty(B, A)$.
Observe that
\[ \| [x_n]_n^\infty\|_{F_{\infty}(B, A)} = \sup \{ \| [(x_n)_{n=1}^\infty+ c_0(\IN, A)] \cdot b \|_{A^\infty}: b\in (B)_1\}.\]
When $B$ is $\sigma$-unital, a standard reindexation argument shows that
$F_{\infty}(B, A)$ is unital \cite{Kircent}.
If $(A, \alpha)$ is a $G$-\Cs-algebra and $B \subset A^\infty$ is invariant under
the diagonal $G$-action, then we denote by
$F_{\infty, \alpha}(B, A)$ the set of
all $G$-continuous elements in $F_{\infty}(B, A)$ \cite{SzI}.
Note that $F_{\infty, \alpha}(B, A)$, equipped with the restriction action, is a $G$-\Cs-algebra.
Put $F_{\infty}(A) := F_{\infty}(A, A)$, $F_{\infty, \alpha}(A):= F_{\infty, \alpha}(A, A)$ for short.
For notational simplicity,
we reuse the same symbol $\alpha$ for the induced $G$-actions on $A^\infty$, $F_{\infty, \alpha}(B, A)$.
\begin{Prop}\label{Prop:Gucp}
Let $G$ be second countable.
Let $\alpha \colon G \curvearrowright A$ be an amenable action on a separable \Cs-algebra.
Then for any unital $G$-\Cs-algebra $(C, \gamma)$,
there is a $G$-u.c.p.~ map $C \rightarrow F_{\infty, \alpha}(A)$.
\end{Prop}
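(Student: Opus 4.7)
The plan is to exploit the QAP---granted by Theorem~\ref{Thm:QAP}---to build explicit c.c.p.\ maps $\Phi_n\colon C\to A$ whose sequence $[\Phi_n(c)]_n$ lies in $A^\infty\cap A'$, is asymptotically $G$-equivariant, and thus descends to a $G$-u.c.p.\ map $C\to F_{\infty,\alpha}(A)$. The key idea is to couple QAP witnesses $\xi_n\in C_c(G,A)$ for $\alpha$ with a faithful covariant $G$-representation of $C$ through a suitably chosen vector in $\cH\otimes L^2(G,A)$.

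By Theorem~\ref{Thm:QAP} and the Remark after Theorem~\ref{Thm:AP}, the separability of $A$ and the second countability of $G$ yield a sequence $(\xi_n)_n\subset C_c(G,A)$ with $\|\xi_n\|\le 1$, $\ip{\xi_n,\xi_n}\to 1$ strictly, $\|[\xi_n,a]\|\to 0$ for each $a\in A$, and $\|\tilde\alpha_g(\xi_n)-\xi_n\|\to 0$ uniformly on compact subsets of $G$. Fix any faithful covariant representation $(\pi,u)\colon(C,\gamma)\to(\IB(\cH),\Ad u)$---for instance the regularly induced representation on $L^2(G,\cK)$ from some faithful $\rho\colon C\to\IB(\cK)$---and a unit vector $v\in\cH$. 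Define $\eta_n\in\cH\otimes L^2(G,A)\cong L^2(G,\cH\otimes A)$ by $\eta_n(g):=u_g v\otimes\xi_n(g)$, and put
\[
\Phi_n(c):=\ip{\eta_n,(\pi(c)\otimes 1)\eta_n}_A=\int_G \ip{v,\pi(\gamma_g^{-1}(c))v}\,\xi_n(g)^*\xi_n(g)\,dm(g)\in A.
\]
This is c.c.p., being the composition of the u.c.p.\ map $C\to C_b^{\mathrm{lu}}(G)$, $c\mapsto(g\mapsto\ip{v,\pi(\gamma_g^{-1}(c))v})$, with the c.p.\ integration map $f\mapsto\int_G f(g)\xi_n(g)^*\xi_n(g)\,dm(g)$.

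Setting $\Phi(c):=[\Phi_n(c)]_n$, three properties remain to verify: (i) $\Phi_n(1)=\ip{\xi_n,\xi_n}\to 1$ strictly, yielding $\Phi(1)=1_{F_\infty(A)}$ and hence unitality; (ii) $\|[\Phi_n(c),a]\|\to 0$ for $a\in A$ and $c\in C$, so $\Phi(c)\in A^\infty\cap A'$; and (iii) $\|\Phi_n(\gamma_g(c))-\alpha_g(\Phi_n(c))\|\to 0$ uniformly on compact subsets of $G$, so $\Phi$ is $G$-equivariant, whence its image is norm-continuous in $g$ and lies in $F_{\infty,\alpha}(A)$. Property~(ii) follows from the identity $[\eta_n,a](h)=u_h v\otimes[\xi_n(h),a]$, which gives $\|[\eta_n,a]\|=\|[\xi_n,a]\|_{L^2(G,A)}\to 0$, combined with the Cauchy--Schwarz bound $\|[\Phi_n(c),a]\|\le 2\|c\|\|[\eta_n,a]\|$. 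Property~(iii) reduces---via $\Phi_n(\gamma_g(c))=\ip{(u_g^*\otimes 1)\eta_n,(\pi(c)\otimes 1)(u_g^*\otimes 1)\eta_n}$ (from $\pi(\gamma_g(c))=u_g\pi(c)u_g^*$) and $\alpha_g(\Phi_n(c))=\ip{(1\otimes\tilde\alpha_g)\eta_n,(\pi(c)\otimes 1)(1\otimes\tilde\alpha_g)\eta_n}$ ($G$-equivariance of the $A$-valued inner product)---to the bound $\|(u_g^*\otimes 1)\eta_n-(1\otimes\tilde\alpha_g)\eta_n\|=\|\xi_n-\tilde\alpha_g(\xi_n)\|_{L^2(G,A)}\to 0$ uniformly on compact $g$. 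The main conceptual step is identifying the ``Stinespring coupling'' $\eta_n(g)=u_g v\otimes\xi_n(g)$, which simultaneously accommodates the $C$-action (via $\pi\otimes 1$), the $A$-bimodule structure on $L^2(G,A)$ (for asymptotic centrality), and the diagonal $G$-action (for asymptotic equivariance); after that, the estimates are routine.
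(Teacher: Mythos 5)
Your proposal is correct and is essentially the paper's proof: your map $\Phi_n(c)=\int_G \ip{v,\pi(\gamma_g^{-1}(c))v}\,\xi_n(g)^\ast\xi_n(g)\,dm(g)$ is exactly the paper's $\Phi_n(c)=\int_G\varphi(\gamma_g^{-1}(c))\xi_n(g)^\ast\xi_n(g)\,dm(g)$ with $\varphi$ the vector state $\ip{v,\pi(\cdot)v}$ (the paper simply fixes an arbitrary $\varphi\in\mathfrak{S}(C)$ and needs no covariant representation), and your centrality, unitality, and equivariance estimates coincide with the paper's. The ``Stinespring coupling'' $\eta_n$ and the appeal to the strengthened convergence in the Remark are only packaging of the same Cauchy--Schwarz arguments.
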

\begin{proof}
Take a sequence $(\xi_n)_{n=1}^\infty \in C_c(G, A) \subset L^2(G, A)$
as in the definition of the QAP.
Fix $\varphi\in \mathfrak{S}(C)$.
For each $n\in \mathbb{N}$,
define $\Phi_n \colon C \rightarrow A$ to be
\[\Phi_n(c):= \int_{G} \varphi(\gamma_g^{-1}(c)) \xi_{n}(g)^\ast \xi_{n}(g)\,dm(g),\quad c\in C.\]
We show that the sequence $(\Phi_n)_{n=1}^\infty$ defines a $G$-u.c.p.~ map
$\Phi_\infty \colon C \rightarrow F_{\infty, \alpha}(A)$.
Observe that for $a\in A$, $c\in C$, and $n\in \mathbb{N}$, the Cauchy--Schwarz inequality implies
\[\|[a, \Phi_n(c)] \| \leq 2\|c\| \|[a, \xi_n]\| .\]
Thus the sequence $(\Phi_n)_{n=1}^\infty$
defines a c.c.p.~ map $\Phi_\infty \colon C \rightarrow F_\infty(A)$
by
\[\Phi_\infty(c):= [\Phi_n(c)]_n^\infty,\quad c\in C.\]
Since
$\Phi_n(1)= \langle \xi_n, \xi_n \rangle$,
the map
$\Phi_\infty$ is unital.
For $c\in C$ and $s\in G$,
\begin{align*}
\alpha_s(\Phi_n(\gamma_s^{-1}(c))) &= \int_{G} \varphi(\gamma_{sg}^{-1}(c))\alpha_s(\xi_{n}(g))^\ast \alpha_s(\xi_{n}(g)) \,dm(g)\\
&=\int_G \varphi(\gamma_g^{-1}(c))[\tilde{\alpha}_s(\xi_{n})(g)]^\ast\tilde{\alpha}_s(\xi_{n})(g)\,dm(g).
\end{align*}
Hence
\[\| a^\ast( \alpha_s (\Phi_n(c)) - \Phi_n(\gamma_s(c)))a\|
\leq 2\|c \| \| (\tilde{\alpha}_s(\xi_n) - \xi_n )a\|\]
for all $a\in (A)_1$ and $c\in C$.
This shows that $\Phi_\infty$ is $G$-equivariant.
As $\Phi_\infty$ is continuous and $C$ is a $G$-\Cs-algebra,
we obtain
$\Phi_\infty(C) \subset F_{\infty, \alpha}(A)$.
\end{proof}

\begin{Def}
Let $(A, \alpha)$ be a $G$-\Cs-algebra.
We say that an element $(x_n)_{n=1}^\infty \in \ell^\infty(\IN, A)$ is
\emph{equi-$G$-continuous in the strict topology},
if for any $a\in A$,
the maps
\[G \ni g \mapsto \alpha_g(x_n) a;~ n\in \IN\]
are equicontinuous in norm.
Similarly, for a bounded sequence $(\xi_n)_{n=1}^\infty \subset L^2(G, A)$,
we say the sequence is \emph{equi-$G$-continuous in the strict topology}
if for any $a\in A$,
the maps \[G \ni g \mapsto \tilde{\alpha}_g(\xi_n) a;~ n\in \IN\]
are equicontinuous in norm.
\end{Def}

\begin{Lem}\label{Lem:lift}
Let $G$ be second countable.
Let $(A, \alpha)$ be a separable $G$-\Cs-algebra.
\begin{enumerate}[\upshape(1)]
\item
Let $a \in F_{\infty, \alpha}(A)$.
Then any its representing sequence $(a_n)_{n=1}^\infty \in \ell^\infty(\mathbb{N}, A)$ 
is equi-$G$-continuous in the strict topology.
\item
Let $\xi \in L^2(G, F_{\infty, \alpha}(A))$.
Then for any $\ve>0$,
there is a bounded sequence $(\eta_n)_{n=1}^\infty \in C_c(G, A)$
as follows.
\begin{itemize}
\item $(\eta_n)_{n=1}^\infty$ is equicontinuous in norm, and equi-$G$-continuous in the strict topology,
\item $\eta_n$'s are supported in a same compact set,
\item $\sup\{ \|\eta_n(g)\|: n \in \IN,~g\in G\} < \infty$,
\item
the map $\eta_\infty(g):= [\eta_n(g)]_n^\infty$; $g\in G$,
sits in $C_c(G, F_{\infty, \alpha}(A))$ and satisfies
$\eta_\infty \approx_{\ve} \xi$.
\end{itemize}
\end{enumerate}
\end{Lem}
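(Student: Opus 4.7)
The plan is to prove the two parts in order, with Part (1) providing the structural ingredient that makes the approximation/lifting argument in Part (2) work.

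For Part (1), let $a \in F_{\infty,\alpha}(A)$ be represented by a bounded sequence $(a_n)_n \in \ell^\infty(\IN, A)$, with $\sup_n \|a_n\| \le M$. Given $b \in A$ and $\ve > 0$, my goal is to exhibit a neighborhood $U$ of $e \in G$ with $\|(\alpha_g(a_n) - a_n) b\| < \ve$ for every $g \in U$ and every $n$. I would combine three ingredients: the $G$-continuity of $a$ in $F_\infty(A)$ (which unpacks to $\sup_{b' \in (A)_1} \limsup_n \|(\alpha_g(a_n) - a_n) b'\| \to 0$ as $g \to e$); the point-norm continuity of $\alpha$ on $A$ (so each individual $g \mapsto \alpha_g(a_n) b$ is continuous at $e$, vanishing there); and the centrality of $(a_n)$, i.e., $\|[a_n, c]\| \to 0$ for every $c \in A$, which upgrades automatically to \emph{uniform} convergence on compact subsets $K \subset A$ (by total boundedness of $K$ and uniform boundedness of $\|\mathrm{ad}(a_n)\|$). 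Applied to the compact set $\{\alpha_h^{-1}(b) : h \in \overline{U}\}$ together with the commutator identity
\[
(\alpha_g(a_n) - a_n) b = b(\alpha_g(a_n) - a_n) + \alpha_g([a_n, \alpha_g^{-1}(b)]) - [a_n, b],
\]
this transfers the $\limsup_n$-control supplied by $G$-continuity into the required uniform-in-$n$ estimate.

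For Part (2), the strategy is a standard three-step approximation-and-lift. First, approximate $\xi \in L^2(G, F_{\infty,\alpha}(A))$ by some $\xi_0 \in C_c(G, F_{\infty,\alpha}(A))$ with $\|\xi_0 - \xi\| < \ve/2$, using density of $C_c$ in $L^2$. Next, uniformly approximate $\xi_0$ (continuous and compactly supported) by a finite combination $\xi_0'(g) = \sum_{i=1}^N h_i(g)\, a_i$ with $h_i \in C_c(G)$ all supported in a common compact set $L \subset G$ and $a_i \in F_{\infty,\alpha}(A)$; this uses the density of $C_c(G) \odot F_{\infty,\alpha}(A)$ in $C_c(G, F_{\infty,\alpha}(A))$. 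For each $a_i$, choose a bounded representing sequence $(a_{i,n})_n \in \ell^\infty(\IN, A)$, and set $\eta_n(g) := \sum_{i=1}^N h_i(g) a_{i,n} \in C_c(G, A)$. Then the common support in $L$, the uniform pointwise bound $\sup_{n,g} \|\eta_n(g)\| \le \sum_i \|h_i\|_\infty \sup_n \|a_{i,n}\|$, equicontinuity in norm (from uniform continuity of each $h_i$ combined with the pointwise bound), and equi-$G$-continuity in the strict topology (from Part (1) applied to each $(a_{i,n})_n$ and linearity of the finite sum) all follow immediately. Finally $\eta_\infty(g) = \sum_i h_i(g)\, [a_{i,n}]_n^\infty = \xi_0'(g)$ lies in $C_c(G, F_{\infty,\alpha}(A))$ with $\|\eta_\infty - \xi\| < \ve$ by the two approximations.

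The main obstacle is Part (1): the delicate step is converting the $\limsup_n$-control coming from the norm on $F_\infty(A)$ into a uniform-in-$n$ bound needed for equi-continuity, since the two do not coincide for a general bounded family of continuous functions. The automatic uniform-on-compacts convergence of commutators for central sequences is the essential structural input that bridges this gap. Once Part (1) is in place, Part (2) is entirely routine.
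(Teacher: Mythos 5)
Your Part (2) is essentially identical to the paper's argument: reduce to $\xi=\sum_{i}\rho_i\otimes x_i$ with $\rho_i\in C_c(G)$ and $x_i\in F_{\infty,\alpha}(A)$, lift each coefficient to a bounded representing sequence, and invoke Part (1); the verifications you sketch are correct. The problem is Part (1), which the paper itself does not prove (it is quoted from Lemma 2.2 of \cite{SzI}); your proposed proof of it has a genuine gap.

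You correctly isolate the crux --- upgrading $\sup_{b'\in(A)_1}\limsup_n\|(\alpha_g(a_n)-a_n)b'\|\to0$ (as $g\to e$) to an estimate uniform in $n$ --- but the commutator identity does not perform this upgrade. It merely trades $\|(\alpha_g(a_n)-a_n)b\|$ for $\|b(\alpha_g(a_n)-a_n)\|$ up to error terms $\|[a_n,\alpha_g^{-1}(b)]\|+\|[a_n,b]\|$ which tend to $0$ as $n\to\infty$ uniformly for $g$ in compacta; the left-multiplied quantity suffers from exactly the same $\limsup_n$-versus-$\sup_n$ defect, so nothing is gained. The point becomes stark when $A$ is commutative: the commutator terms vanish identically, and your remaining two ingredients say only that each continuous function $g\mapsto\|(\alpha_g(a_n)-a_n)b\|$ vanishes at $e$ and that its pointwise-in-$g$ $\limsup_n$ tends to $0$ as $g\to e$. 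For an abstract family of continuous functions these facts do not imply equicontinuity (consider bumps of height $1$ drifting toward $e$ whose widths shrink fast enough that every fixed $g\neq e$ eventually misses them). The lemma is nevertheless true, but for a structural reason you never invoke. One standard repair: for $f\in P_c(G)$ supported in a small neighbourhood $V$ of $e$, put $a_n':=\int_G f(g)\alpha_g(a_n)\,dm(g)$. Then $(a_n')_n$ is equi-$G$-continuous in \emph{norm}, with modulus $\|h\ast f-f\|_1\sup_n\|a_n\|$, while the reverse Fatou lemma combined with the norm formula for $F_\infty(A)$ gives $\limsup_n\|(a_n'-a_n)b'\|\le \sup_{g\in V}\|\alpha_g(a)-a\|_{F_\infty(A)}\,\|b'\|$ for every $b'$. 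Splitting $(\alpha_h(a_n)-a_n)b$ through $a_n'$, controlling the term $\|(a_n-a_n')\alpha_h^{-1}(b)\|$ via total boundedness of $\{\alpha_h^{-1}(b):h\in\overline{U}\}$, and disposing of the finitely many remaining indices by individual continuity yields the claimed equicontinuity. Some averaging (or subadditivity-plus-measure) argument of this kind is indispensable; as written, your Part (1) does not close.
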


\begin{proof}
(1): This is Lemma 2.2 in \cite{SzI}.

(2): We may assume that $\xi= \sum_{i=1}^N \rho_i \otimes x_i$
for some $\rho_i \in C_c(G)$ and $x_i \in F_{\infty, \alpha}(A)$.
For each $x_i$, take a representing sequence $(x_{i, n})_{n=1}^\infty\in \ell^\infty(\mathbb{N}, A)$.
Set \[\eta_n:=\sum_{i=1}^N \rho_i \otimes x_{i, n} \in C_c(G, A).\]
Then, by (1), this gives the desired sequence.
\end{proof}

\begin{Thm}\label{Thm:central}
Let $G$ be a second countable exact group.
Let $\alpha \colon G \curvearrowright A$ be a \Cs-dynamical system
on a separable \Cs-algebra.
Then the following conditions are equivalent.
\begin{enumerate}[\upshape(1)]
\item The action $\alpha$ is amenable.
\item For any unital $G$-\Cs-algebra $C$,
there is a $G$-u.c.p.~ map $C \rightarrow F_{\infty, \alpha}(A)$.
\item There is a sequence $(\xi_n)_{n=1}^\infty$ in $L^2(G, F_{\infty, \alpha}(A))$
satisfying
$\langle \xi_n, \tilde{\alpha}_g(\xi_n)\rangle \rightarrow 1$ in norm,
uniformly on compact subsets of $G$.
\item There is a sequence $(k_n)_{n=1}^\infty \subset C_c(G, F_{\infty, \alpha}(A))$
of positive type functions
converging to $1$ in norm uniformly on compact subsets of $G$.
\item The action $\alpha \colon G \curvearrowright F_{\infty, \alpha}(A)$
 is amenable.
\item For any $G$-invariant separable \Cs-subalgebra $B \subset A^\infty$,
the action $\alpha \colon G \curvearrowright F_{\infty, \alpha}(B, A)$
is amenable.
\end{enumerate}
\end{Thm}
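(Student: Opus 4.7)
The plan is to prove the six equivalences via two interlocking chains. The main loop $(1) \Rightarrow (2) \Rightarrow (4) \Rightarrow (3) \Rightarrow (1)$ handles the characterizations expressed ``from the viewpoint of $A$'', while the secondary chain $(1) \Rightarrow (6) \Rightarrow (5) \Rightarrow (3)$ brings in the intrinsic amenability of $F_{\infty, \alpha}(B, A)$ and $F_{\infty, \alpha}(A)$. The central technical engine throughout is Lemma~\ref{Lem:lift}, which lets us descend from $L^2(G, F_{\infty, \alpha}(A))$-valued data to $L^2(G, A)$-sequences, supplemented by Kirchberg's reindexation, exactness of $G$ through $C^{\rm lu}_{\rm b}(G)$ (Proposition~\ref{Prop:exact}), and Proposition~\ref{Prop:Gucp}.

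The easier implications are dispatched first. $(1) \Rightarrow (2)$ is Proposition~\ref{Prop:Gucp}; $(6) \Rightarrow (5)$ is the case $B = A$; and $(5) \Rightarrow (3)$ follows at once from Theorems~\ref{Thm:QAP} and~\ref{Thm:AP}, since the QAP net for $F_{\infty, \alpha}(A)$ directly witnesses~(3). For $(2) \Rightarrow (4)$, exactness of $G$ yields amenability of $G \acts C^{\rm lu}_{\rm b}(G)$ by Proposition~\ref{Prop:exact}, so by Theorem~\ref{Thm:QAP} there is a QAP net whose inner-product functions $k_i(g) := \ip{\xi_i, \tilde{\alpha}_g \xi_i}$ are positive-type in $C_c(G, C^{\rm lu}_{\rm b}(G))$ and converge to $1$ strictly (hence in norm, by unitality) uniformly on compacts. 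Applying the hypothesized $G$-u.c.p.\ map $\phi\colon C^{\rm lu}_{\rm b}(G) \to F_{\infty, \alpha}(A)$, which preserves positive type by $G$-equivariant complete positivity, gives the net required by (4). For $(4) \Rightarrow (3)$, a standard Hilbert-module GNS construction realizes each positive-type $k_n$ as $\ip{\xi_n, \tilde{\alpha}_g \xi_n}$ for some $\xi_n \in L^2(G, F_{\infty, \alpha}(A))$, with arbitrarily small controllable error that can be absorbed.

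The crux is $(3) \Rightarrow (1)$. Given $(\xi_n)\subset L^2(G, F_{\infty, \alpha}(A))$ with $\ip{\xi_n, \tilde{\alpha}_g \xi_n} \to 1$ in norm uniformly on compacts, I would apply Lemma~\ref{Lem:lift}(2) with tolerance $1/n$ to each $\xi_n$, obtaining sequences $(\eta_{n, m})_m \subset C_c(G, A)$ with common compact support, uniform pointwise bound, equi-$G$-continuity, and $\eta_{n, \infty} := [\eta_{n, m}]_m^\infty \approx_{1/n} \xi_n$ in $L^2(G, F_{\infty, \alpha}(A))$. The key is that $\eta_{n, \infty}(g) \in F_{\infty, \alpha}(A) \subset A^\infty \cap A'$ forces $\|[\eta_{n, m}(g), a]\| \to 0$ as $m \to \infty$ for each fixed $n, g, a$; combined with the equi-continuity of $(\eta_{n, m})_m$ in $g$, Arzel\`a--Ascoli upgrades this to uniform convergence on compact subsets of $G$. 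A Cantor-style diagonal reindexation against a dense sequence $(a_k) \subset A$ and an exhaustion $K_k \nearrow G$ then produces $m(n)$ for which $\zeta_n := \eta_{n, m(n)}$ satisfies, in norm in $A$ uniformly on compacts, both the Exel--Ng-type condition $\ip{\zeta_n, a_k \tilde{\alpha}_g \zeta_n} \to a_k$ and the pointwise commutator vanishing $\sup_{g \in K_n} \|[\zeta_n(g), a_k]\| \to 0$. After a cut-off by a $G$-approximate unit (Lemma~\ref{Lem:qa}) to secure $L^2$-boundedness, this gives the Exel--Ng AP and hence, by Theorem~\ref{Thm:AP} and Theorem~\ref{Thm:QAP}, amenability of $\alpha$.

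Finally, for $(1) \Rightarrow (6)$, I would generalize Proposition~\ref{Prop:Gucp}: fix a countable dense $\{b^{(k)}\} \subset B$ with lifts $b^{(k)} = [b^{(k)}_m]_m^\infty \in \ell^\infty(\IN, A)$, and Kirchberg-reindex the QAP net $(\xi_n)$ of $A$ to produce $(\xi_{n(m)})_m$ with $\|[\xi_{n(m)}, b^{(k)}_m]\| \to 0$ for each $k$. The Cauchy--Schwarz bound in the proof of Proposition~\ref{Prop:Gucp} then gives $\|[\Phi_m(c), b^{(k)}_m]\| \leq 2\|c\|\,\|[b^{(k)}_m, \xi_{n(m)}]\| \to 0$, so the class $\Phi_\infty := [\Phi_m(c)]_m^\infty$ lies in $A^\infty \cap B'$ modulo $\mathrm{Ann}(A^\infty, B)$, yielding $G$-u.c.p.\ maps $C \to F_{\infty, \alpha}(B, A)$ for every unital $G$-$C^*$-algebra $C$. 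Specializing to $C = C^{\rm lu}_{\rm b}(G)$ (amenable by exactness) and arguing as in $(2) \Rightarrow (4) \Rightarrow (3) \Rightarrow (1)$, but now with $F_{\infty, \alpha}(B, A)$ in place of $A$, delivers amenability of $\alpha\colon G \acts F_{\infty, \alpha}(B, A)$. The principal obstacle will be the bookkeeping in $(3) \Rightarrow (1)$: one must orchestrate through a single diagonal index $m(n)$ the strict convergence $\ip{\zeta_n, \tilde{\alpha}_g \zeta_n} \to 1$, the $L^2$-norm control (via approximate-unit cutoff), and the uniform pointwise vanishing of commutators, all while preserving the equi-continuity structure provided by Lemma~\ref{Lem:lift}; a closely related subsidiary challenge is the analogous reindexation over the infinite dense subset of $B$ in $(1) \Rightarrow (6)$.
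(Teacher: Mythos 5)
Your main loop $(1)\Rightarrow(2)\Rightarrow(4)\Rightarrow(3)\Rightarrow(1)$ is essentially the paper's argument (the paper packages $(3)\Rightarrow(1)$ slightly more economically: for each $(F,K,\ve)$ it lifts a single $\xi$ by Lemma~\ref{Lem:lift}(2), cuts by one almost-$G$-invariant positive contraction $a$ from Lemma~\ref{Lem:qa}, and verifies the QAP directly, whereas you diagonalize to get an Exel--Ng net and invoke Theorem~\ref{Thm:AP}; your version works provided you fix, for each $n$, the support and pointwise bound produced by Lemma~\ref{Lem:lift}(2) \emph{before} choosing $m(n)$, so that the commutator estimates can be made small relative to those constants, and provided the approximate-unit cutoff is done per finite data set -- this is the bookkeeping you flag).

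The genuine gap is in your $(1)\Rightarrow(6)$, on which your proof of $(5)$ and $(6)$ entirely rests. Producing a $G$-u.c.p.\ map $C^{\rm lu}_{\rm b}(G)\to F_{\infty,\alpha}(B,A)$, hence positive type functions in $C_c(G, F_{\infty,\alpha}(B,A))$ converging to $1$, and hence vectors $\xi_n\in L^2(G,F_{\infty,\alpha}(B,A))$ with $\ip{\xi_n,\tilde{\alpha}_g(\xi_n)}\to 1$, does \emph{not} deliver amenability of $G\acts F_{\infty,\alpha}(B,A)$: for a noncommutative unital $G$-\Cs-algebra $D$ such data carry no quasi-centrality in $D$, and the step you invoke (``arguing as in $(2)\Rightarrow(4)\Rightarrow(3)\Rightarrow(1)$ with $F_{\infty,\alpha}(B,A)$ in place of $A$'') cannot be run, because $(3)\Rightarrow(1)$ concludes amenability of the \emph{base} algebra precisely by exploiting that the coefficients of $\xi$ lie in its central sequence algebra, i.e.\ asymptotically commute with the base; your $\xi_n$ take values in $D=F_{\infty,\alpha}(B,A)$ itself, with no commutation control over $D$. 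Closing this is exactly where the paper's proof of $(3)\Rightarrow(6)$ does extra work: given a finite subset $\{y_1,\dots,y_m\}\subset F_{\infty,\alpha}(B,A)$ and a dense sequence of $B$, it lifts these to representing sequences, lifts $\xi$ by Lemma~\ref{Lem:lift}(2), and chooses a subsequence $(\xi_{l(n)})$ whose pointwise commutators with $x_{i,n}$ and $y_{j,n}$ vanish fast enough that the resulting $\zeta\in C_c(G,F_{\infty,\alpha}(B,A))$ commutes \emph{exactly} with the prescribed $y_j$'s while $\ip{\zeta,\tilde{\alpha}_s(\zeta)}\approx 1$; this verifies the QAP of $G\acts F_{\infty,\alpha}(B,A)$ and hence $(6)$ (and note the paper derives $(6)$ from $(3)$, not from $(1)$, so no analogue of Proposition~\ref{Prop:Gucp} for $B$ is needed). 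Your reindexed construction of u.c.p.\ maps into $F_{\infty,\alpha}(B,A)$ is fine as far as it goes, but without this commutator-absorbing reindexation against elements of $F_{\infty,\alpha}(B,A)$ itself, the final implication to $(6)$ is unproved.
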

\begin{Rem}
In a private communication, the authors are informed from Siegfried Echterhoff
that he, together with Alcides Buss, Rufus Willett, has independently obtained the same statement for unital discrete \Cs-dynamics.
\end{Rem}
\begin{proof}[Proof of Theorem \ref{Thm:central}]
(1) $\Rightarrow$ (2): 
This is Proposition \ref{Prop:Gucp}.

(2) $\Rightarrow$ (4): Let $G \curvearrowright X$ be an amenable action
on a compact space.
Choose a sequence $(k_n)_{n=1}^\infty \subset C_c(G, C(X))$
of positive type functions
which converges to $1$ uniformly on compact subsets of $G$.
Take a $G$-u.c.p.~ map $\Phi \colon C(X) \rightarrow F_{\infty, \alpha}(A)$.
Then the sequence $(\Phi \circ k_n)_{n=1}^\infty$ confirms condition (4).

(4) $\Rightarrow$ (3): This follows from Proposition 2.5 of \cite{AD}.

(3) $\Rightarrow$ (6):
Let a compact set $K \subset G$ and $\ve>0$ be given.
Choose $\xi \in C_c(G, F_{\infty, \alpha}(A))$
with $\|\xi \|=1$ and $\langle \xi, \tilde{\alpha_s}(\xi) \rangle \approx_\ve 1$ for $s\in K$.

Choose a bounded sequence $(\xi_n)_{n=1}^\infty \in C_c(G, A)$
as in Lemma \ref{Lem:lift} (2) such that
$\xi_\infty(g):= [\xi_n(g)]_n^\infty$ satisfies
$\xi_\infty \approx_\ve \xi$.
We may assume that $\|\xi_{\infty}\| \leq 1$.
Observe that for any $a\in A$, the functions
\[s\in K \mapsto \|a^\ast (\langle \xi_n, \tilde{\alpha}_s(\xi_n)\rangle -1)a\|;~n\in \IN\]
are equicontinuous.
Hence for any $a\in (A)_1$, one has
\[\limsup _{n \rightarrow \infty} \max_{s\in K} \|a^\ast (\langle \xi_n, \tilde{\alpha}_s(\xi_n)\rangle -1 )a\|< 3\ve.\]

Now let $B \subset A ^\infty$ be a separable $G$-invariant \Cs-subalgebra. 
Let $F =\{y_1, \ldots, y_m\} \subset (F_{\infty, \alpha}(B, A))_1$ be a finite subset.
Take a dense sequence $(x_i)_{i=1}^\infty$ in $(B)_1$.
For each $x_i$, $y_j$; $i \in \mathbb{N}$, $j=1, 2, \ldots, m$,
fix their representing sequences $(x_{i, n})_{n=1}^\infty$, $(y_{j, n})_n^\infty \in (\ell^\infty(\IN, A))_1$.
By the choice of $\xi_n$'s and the observation in the previous paragraph,
one can find a subsequence $(\xi_{l(n)})_{n=1}^\infty$
satisfying
\begin{align*}
\|[x_{i, n}, \xi_{l(n)}(g)]\| &< 1/n \quad{\rm~ for~all~} i \leq n,~ g\in G,~n\in \IN, \\
\|[y_{i, n}, \xi_{l(n)}(g)]\| &< 1/n \quad{\rm~for~ all~} i=1, \ldots, m,~g\in G,~n\in \IN, \end{align*}
\[\max_{s\in K} \| x_{i, n}^\ast (\langle\xi_{l(n)}, \tilde{\alpha}_s(\xi_{l(n)})\rangle -1 )x_{i, n}\| < 3\ve \quad{\rm~for~all~}i\leq n, ~n\in \IN.\]
Define the map $\tilde{\zeta} \colon G \rightarrow \ell^\infty(\IN, A)$ to be
\[\tilde{\zeta}(g):= (\xi_{l(n)}(g))_{n=1}^\infty,~ g\in G.\]
By the above three inequalities,
by passing to the quotient, $\tilde{\zeta}$ defines a function
$\zeta \in C_c(G, F_{\infty, \alpha}(B, A))$ satisfying
$[\zeta, y] =0$ for $y\in F$ and
$\langle \zeta, \tilde{\alpha}_s(\zeta) \rangle \approx_{3\ve} 1$ for $s\in K$.
This confirms condition (6).

(6) $\Rightarrow$ (5) $\Rightarrow$ (3): Trivial.

(3) $ \Rightarrow$ (1): Let $F \subset A$ be a finite set.
Let $e\in K \subset G$ be a compact subset.
Let $0< \ve< 1$.
Take $a \in (A)_{1}$ satisfying $a x \approx_\ve x \approx_{\ve} xa$ for all $x\in F$
and that $\alpha_s(a) \approx_\ve a$ for all $s\in K$ (Lemma \ref{Lem:qa}).

Choose $\xi \in C_c(G, F_{\infty, \alpha}(A))$ with
$\langle \xi, \tilde{\alpha}_s(\xi) \rangle \approx_\ve 1$ for $s\in K$,
$\|\xi\| \leq 1$.
Applying Lemma \ref{Lem:lift} (2) to $\xi$ and $\ve>0$,
one has a sequence $(\xi_n)_{n=1}^\infty \subset C_c(G, A)$ as in the statement.
By the choice of $(\xi_n)_{n=1}^\infty$, one has $m\in \mathbb{N}$ satisfying
$a^\ast \langle \xi_m, \tilde{\alpha}_s(\xi_m) \rangle \alpha_s(a) \approx _{3\ve} a^\ast a$ for all $s\in K$
and $\|[\xi_m, y]a\|< \ve$ for all $y \in aF$.
Set $\zeta:= \xi_m a$.
Then $\langle \zeta, \tilde{\alpha}_s(\zeta) \rangle \approx_{3\ve} a^\ast a$
for all $s\in K$. 
In particular $\|\zeta\| < \sqrt{1+3\ve} < 2$.
For any $x\in F$,
\[x\zeta \approx_{2\ve} ax \zeta = a x\xi_m a \approx_\ve \xi_m a xa = \zeta xa \approx_{2\ve} \zeta x.\]
This shows the QAP of $\alpha$.
\end{proof}
\begin{Rem}
When $G$ is discrete, one can replace `$\infty$' by any free ultrafilter $\omega$ on $\IN$.
In particular, the theorem removes the assumptions of Theorem C in \cite{Suz20c}
\end{Rem}
\begin{Rem}
Theorem \ref{Thm:central} does not hold true for non-exact groups.
To see this, consider a second countable non-exact group $G$ (\cite{Gro}, \cite{Os}).
Then the left translation action $G \curvearrowright C_0(G)$ is amenable,
but it fails conditions (3) to (6).
\end{Rem}
\section{On compact group fixed point algebras}\label{Sec:fixed point}
When one wants to show the nuclearity of a given \Cs-algebra,
the following theorem is quite powerful:
A \Cs-algebra is nuclear if it admits a compact group action 
whose fixed point algebra is nuclear.
The theorem is successfully applied to celebrated \Cs-algebras, including
the Cuntz algebras, irrational rotation algebras, graph algebras, Pimsner algebras. For details, we refer the reader to Section 4.5 of \cite{BO}.

Here we establish its \Cs-dynamics analogue.
This result is important in the next section, where we give a powerful framework to produce amenable actions on simple \Cs-algebras.
\begin{Thm}\label{Thm:fixed point}
Let $G$ be a locally compact group.
Let $K$ be a compact group. 
Let $(A, \alpha \times \beta)$ be a $(G \times K)$-\Cs-algebra.
Then the following conditions are equivalent.
\begin{enumerate}[\upshape (1)]
\item The action $G\times K\acts A$ is amenable.
\item The action $G\acts A$ is amenable.
\item The induced action $G\acts A\rtimes K$ is amenable.
\item The restriction action $G\acts A^K$ is amenable.
\end{enumerate}
\end{Thm}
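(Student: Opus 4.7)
Plan: I would establish all four conditions equivalent via the chain $(1)\Leftrightarrow(2)$, $(1)\Leftrightarrow(4)$, and $(3)\Leftrightarrow(4)$. A useful preliminary observation throughout: since $K$ is compact, every state on $A$ is automatically $K$-continuous, so Lemma~\ref{Lem:contist} gives $A''_\alpha = A''_{\alpha\times\beta}=:M$, with the $K$-action extending ultraweakly continuously to $M$.

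The direction $(1)\Rightarrow(2)$ is immediate from Corollary~\ref{Cor:closed} applied to the closed subgroup $G\subset G\times K$. For the converse $(2)\Rightarrow(1)$, which is the key averaging step, Theorem~\ref{Thm:BC} yields a normal $G$-equivariant conditional expectation $E\colon L^\infty(G)\btimes M\to M$, which I would average over $K$ by setting
\[
\tilde E := \int_K \beta_k^{-1}\circ E\circ(\id_{L^\infty(G)}\btimes\beta_k)\,dk,
\]
a pointwise-ultraweak integral well-defined by the normality of $E$ together with the $W^*$-continuity of $K\curvearrowright M$. The resulting $\tilde E$ is a normal $(G\times K)$-equivariant conditional expectation, and composing with $\id\btimes\mu_K\btimes\id$, where $\mu_K\colon L^\infty(K)\to\IC$ is the Haar state, extends it to a $(G\times K)$-equivariant CE on $L^\infty(G\times K)\btimes M$, which witnesses (1) via Theorem~\ref{Thm:BC}.

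For $(1)\Leftrightarrow(4)$: the $K$-averaging $P_K\colon A\to A^K$ is a $(G\times K)$-equivariant conditional expectation with $K$ acting trivially on its range; the implication $(1)\Rightarrow(4)$ follows from Proposition~\ref{Prop:perm} together with its last clause (the amenable normal subgroup $K\triangleleft G\times K$ sitting inside the kernel of the action on $A^K$). For $(3)\Leftrightarrow(4)$: Green's imprimitivity theorem provides a $G$-equivariant strong Morita equivalence between $A\rtimes K$ and $A^K$ (via the imprimitivity bimodule $A$ with $A^K$-valued inner product $\langle a,b\rangle := P_K(a^*b)$), and amenability of $G$-\Cs-dynamics is preserved under $G$-equivariant Morita equivalence, since Morita equivalent \Cs-algebras have canonically $G$-isomorphic centers of second duals (Theorem~\ref{Thm:QAP}(3)). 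The reverse direction $(4)\Rightarrow(1)$ would close the cycle: I would embed $A$ as the $(G\times K)$-equivariant full corner $pBp$ of $B := A\otimes\cK(L^2(K))$ with the diagonal $(G\times K)$-action in which $K$ acts as $\beta\otimes\Ad(\rho)$, $\rho$ the regular representation, and $p := 1_A\otimes p_{\mathrm{triv}}$ the $(G\times K)$-invariant rank-one projection onto the constants in $L^2(K)$. The Green--Julg identification then gives $B^K\cong A\rtimes K$ as $G$-\Cs-algebras, and hereditary/corner permanence (Proposition~\ref{Prop:perm}) transfers amenability between $B$ and $A$ equivariantly.

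The main technical obstacles are twofold. First, the $K$-averaging in $(2)\Rightarrow(1)$ requires rigorous justification of the pointwise ultraweak integral defining $\tilde E$; this is handled by working at the $W^*$-level, where Theorem~\ref{Thm:BC} ensures $E$ is normal and the $K$-action on $M$ is ultraweakly continuous, making the integrand pointwise-ultraweakly continuous in $k$ over the compact group $K$. Second, the closure $(4)\Rightarrow(1)$ requires the equivariant Morita equivalence and Green--Julg identification to be assembled carefully in the $(G\times K)$-setting with the correct $K$-action on the stabilizing factor $\cK(L^2(K))$; this is the most delicate technical ingredient, bridging between the $K$-fixed subalgebra and the full $(G\times K)$-structure.
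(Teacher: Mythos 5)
Your easy implications match the paper's ($(1)\Rightarrow(2)$ from Corollary~\ref{Cor:closed}, $(2)\Rightarrow(4)$ from Proposition~\ref{Prop:perm}), but each of your two substantive steps has a genuine gap. The averaging argument for $(2)\Rightarrow(1)$ rests on two false premises. First, states on $A$ are \emph{not} automatically $K$-continuous for compact $K$: for $\IT$ acting on $C(\IT)$ by rotation, a Dirac state $\delta_x$ has $\|\delta_x-\delta_y\|=2$ for $x\neq y$, so $A''_\alpha\neq A''_{\alpha\times\beta}$ in general (this part is repairable by cutting with a central projection). Second, and fatally, the $G$-conditional expectation produced by Theorem~\ref{Thm:BC} is \emph{not} normal in general (it typically involves a non-normal mean on $L^\infty(G)$), and the $K$-action on $L^\infty(G)\btimes M$ is only point-ultraweakly continuous; hence for fixed $x$ the integrand $k\mapsto\beta_k^{-1}(E(\tilde\beta_k(x)))$ is not even weakly measurable, and the integral defining $\tilde E$ does not exist. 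This is exactly the obstruction the paper's proof of $(3)\Rightarrow(1)$ is designed to avoid: by passing to $A\rtimes K$, the $K$-action becomes \emph{inner}, so the $G$-conditional expectation for $(A\rtimes K)''_\alpha$ is automatically $(G\times K)$-equivariant with no averaging, and one then descends back to $A''_{\alpha\times\beta}$ through $A''_{\alpha\times\beta}\bar\rtimes K$ using vectors concentrated near $e\in K$.

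The step $(3)\Leftrightarrow(4)$ via Green--Rieffel imprimitivity is also incorrect as stated: $A$, with inner product $P_K(a^*b)$, is an $(A\rtimes K)$--$A^K$ imprimitivity bimodule only when the action of $K$ is \emph{saturated}; in general $A^K$ is Morita equivalent only to a proper hereditary subalgebra/ideal of $A\rtimes K$ (take $K$ acting trivially on $\IC$: then $A^K=\IC$ while $A\rtimes K=C^*(K)$). The paper handles the general case by identifying $A\rtimes K$ with $(A\otimes\IK(L^2(K)))^K$, writing it as the directed union of the corners $(A\otimes\IK(p_iL^2(K)))^K$ over projections $p_i\in C^*_\lambda(K)$, and proving a going-up lemma for $G$-conditional expectations of \emph{finite index} onto $A^K\otimes\IC p_i$ via the basic construction; none of this is captured by a bare Morita-equivalence claim. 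Finally, your closing step "$(4)\Rightarrow(1)$" is circular: transferring amenability from $B^K\cong A\rtimes K$ up to $B=A\otimes\IK(L^2(K))$ is precisely an instance of the implication $(3)\Rightarrow(1)$ you are trying to prove (and Proposition~\ref{Prop:perm} only gives inheritance \emph{to} hereditary subalgebras, not from a full corner to the ambient algebra). So the overall scheme is not salvageable without supplying the two ingredients above: the innerness trick for $(3)\Rightarrow(1)$ and the finite-index argument for $(4)\Rightarrow(3)$.
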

The implication $(2)\Rightarrow(4)$ follows from Proposition \ref{Prop:perm}.
The implication $(1)\Rightarrow(2)$ follows from Corollary \ref{Cor:closed}.
We show the implications $(4)\Rightarrow(3) \Rightarrow(1)$,
which complete the proof. 
The proof of $(4)\Rightarrow(3)$ is by studying the Wassermann type inclusion \cite{Was}.

For this, we first recall some basic facts on compact group actions. 
Let $K$ be a compact group.
Let $\rho \colon K \curvearrowright L^2(K)$ denote the right regular representation.
Let $(A, \beta)$ be a $K$-\Cs-algebra. Consider 
the $K$-\Cs-algebra $(A\otimes\IK(L^2(K)), \beta\otimes\Ad_{\rho})$.
Then there is a natural identification $(A\otimes\IK(L^2(K)))^K= A\rtimes K$.
Since we need an explicit isomorphism, let us recall the proof.
\begin{proof}
We first recall that, by the Peter--Weyl theorem, ${\mathrm C}^\ast_\lambda(K)= \IK(L^2(K))^{\Ad_\rho}$. 
Define a $\ast$-homomorphism $\pi \colon A \rightarrow A\otimes C(K)$
to be
\[\pi(a)(k):=\beta_k^{-1}(a) \quad {\rm~for~}a\in A,~k\in K.\]
We regard $A\otimes C(K)$ as a \Cs-subalgebra of $\cM(A\otimes\IK(L^2(K)))$
in the obvious way. 
Then, one has $\pi(A)\subset \cM(A\otimes\IK(L^2(K)))^K$ and 
\[
A\otimes\IK(L^2(K)) = \spa [\pi(A)(1\otimes\IK(L^2(K)))].
\]
By applying the conditional expectation
$\int_K \beta_k \otimes \Ad_{\rho_k} dm(k)$ to both sides, we obtain
\[
(A\otimes\IK(L^2(K)))^K = \spa [\pi(A)(1\otimes {\mathrm C}^\ast_\lambda(K))] = A\rtimes K.\]
\end{proof}
\noindent
Proof of (4) $\Rightarrow$ (3).
Take an approximate unit $(p_i)_{i\in I}$ of projections in 
${\mathrm C}^\ast_\lambda(K) \subset \IK(L^2(K))$. 
Note that
\[(1 \otimes p_i)(A\otimes\IK(L^2(K)))^K (1 \otimes p_i)= (A\otimes\IK(p_iL^2(K)))^K\]
for $i \in I$.
Thus $A \rtimes K$ is (the closure of) the directed union of
\Cs-subalgebras $(A\otimes\IK(p_iL^2(K)))^K$, $i \in I$.
Notice that for each $i \in I$, the conditional expectation
\[\id_A\otimes\mathrm{tr} \colon (A\otimes\IK(p_iL^2(K)))^K \rightarrow A^K\otimes\IC p_i
\] 
has finite index.
Recall that a conditional expectation $E$ on a \Cs-algebra $B$
is said to have finite (probabilistic) index \cite{PP} if it satisfies $E\geq \lambda \cdot \id_B$ 
for some positive number $\lambda>0$.

Now let $(A, \alpha \times \beta)$ be a $(G\times K)$-\Cs-algebra.
Then all constructions above 
respect the $G$-action $\alpha$. 
Hence the implication $(4)\Rightarrow(3)$ follows from the next lemma.
\begin{Lem}
Let $(B, \beta)\subset (A, \alpha)$ be an inclusion of $G$-\Cs-algebras with 
a $G$-conditional expectation $E$ of finite index. 
If $\beta$ is amenable, then so is $\alpha$.
\end{Lem}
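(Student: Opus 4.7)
The plan is to transport the amenability certificate from $B$ to $A$ at the level of universal enveloping $G$-$W^*$-algebras, using the finite-index structure of $E$. By Lemma~\ref{Lem:extension}, $E$ extends to a normal $G$-u.c.p.~map $E'' \colon A''_\alpha \to B''_\beta$, and the Pimsner--Popa inequality $E \geq \lambda \cdot \id_A$ passes to $E'' \geq \lambda \cdot \id_{A''_\alpha}$ by normality and Kaplansky density. Thus $B''_\beta \subset A''_\alpha$ is a $G$-equivariant finite-index von Neumann algebra inclusion. Amenability of $\beta$ provides, via Theorem~\ref{Thm:BC}(1), a $G$-equivariant normal conditional expectation $P \colon L^\infty(G) \btimes B''_\beta \to B''_\beta$.

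The key step is to combine $P$ with the finite-index data of $E''$ to construct a $G$-equivariant normal conditional expectation $Q \colon L^\infty(G) \btimes A''_\alpha \to A''_\alpha$, which by Theorem~\ref{Thm:BC}(1) then yields amenability of $\alpha$. The finite-index inclusion $B''_\beta \subset A''_\alpha$ admits a (possibly infinite) Pimsner--Popa basis $\{m_k\} \subset A''_\alpha$ satisfying the reproducing identity $\sum_k m_k E''(m_k^\ast x) = x$ ultrastrongly for all $x \in A''_\alpha$. The transport
\[
Q(F) := \sum_k m_k \cdot P\bigl((\id_{L^\infty(G)} \btimes E'')((1 \otimes m_k^\ast) F)\bigr)
\]
then satisfies $Q(1 \otimes x) = x$ on $A''_\alpha$ by the reproducing identity, and complete positivity and normality follow from the standard basis calculation.

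The main obstacle is verifying $G$-equivariance of $Q$, since a Pimsner--Popa basis need not be canonically $G$-invariant. This can be handled by a basis-free reformulation: the composition $P \circ (\id_{L^\infty(G)} \btimes E'')$, viewed as landing in $A''_\alpha$ via $B''_\beta \subset A''_\alpha$, is automatically $G$-equivariant and restricts on $A''_\alpha$ to $E'' \geq \lambda \cdot \id$, which can be inverted using the canonical index data of $E''$ (itself $G$-invariant since $E''$ is $G$-equivariant). Alternatively, one works directly at the $C^*$-level with a QAP net $(\eta_i) \subset C_c(G, B)$ for $\beta$: the finite-index estimate $\|a c\|^2 \leq \lambda^{-1}\|E(aa^\ast)^{1/2} c\|^2$ for $a \in A$, $c \in B$ converts strict convergence of $\langle \eta_i, \tilde{\alpha}_g(\eta_i) \rangle - 1$ in $B$ to strict convergence in $A$, and $(\eta_i)$ can then be perturbed using the finite-index structure to enforce asymptotic commutation with all of $A$, yielding the QAP for $\alpha$ and hence amenability by Theorem~\ref{Thm:QAP}.
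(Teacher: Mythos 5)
Your first step is fine and matches the intended route: extending $E$ to a normal $G$-conditional expectation $E''\colon A''_\alpha\to B''_\beta$ with $E''\ge\lambda\cdot\id$ (Lemma~\ref{Lem:extension} plus Kaplansky density). The gap is in the transport step. The single-sum formula $Q(F)=\sum_k m_k\,P\bigl((\id\btimes E'')((1\otimes m_k^\ast)F)\bigr)$ is not completely positive -- in general it is not even self-adjointness preserving. The map $R:=P\circ(\id\btimes E'')$ is a conditional expectation onto $B''_\beta$, hence only $B''_\beta$-bimodular, so pulling the single factor $m_k\in A''_\alpha$ outside is illegitimate; already for $B=\IC\subset A=\IM_2$ with $E=\tau$ and a u.c.p.\ map that restricts to $\tau$ on the diagonal copy, the analogous sum produces non-self-adjoint outputs on positive inputs. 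The genuinely positive variant is the symmetric double sum $\sum_{k,l}m_k(\cdots)m_l^\ast$, but that one does not restrict to the identity on $1\otimes A''_\alpha$ (it produces the index element), so neither formula gives a conditional expectation, and there is no ``standard basis calculation'' to appeal to. Your fallbacks do not close this: $E''\ge\lambda\cdot\id$ makes $E''$ faithful, not invertible by any completely positive map, so the expectation $P\circ(\id\btimes E'')$ onto $B''_\beta$ cannot be ``inverted using the index data''; and at the \Cs-level, while the strict-convergence transfer via $\|a\|\le\lambda^{-1}\|E(a)\|$ is correct, the decisive point is quasi-centrality: a QAP net in $C_c(G,B)$ has no reason to asymptotically commute with $A$, and finite index provides no perturbation mechanism for this -- that is precisely the difficulty the lemma has to overcome. (A smaller inaccuracy: Theorem~\ref{Thm:BC} furnishes a $G$-conditional expectation that need not be normal.)

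The missing idea, and the paper's actual argument after the reduction to $E''$, is the basic construction together with the fact that amenability of a \Ws-dynamical system depends only on its center. Writing $N=B''_\beta\subset M=A''_\alpha$, one forms $\ip{M,e_N}=\IB(L^2(M))\cap(JNJ)'$; the $G$-action extends to $\ip{M,e_N}$ fixing the Jones projection $e_N$, and $\cZ(\ip{M,e_N})\cong\cZ(N)$ via the $G$-equivariant map $z\mapsto Jz^\ast J$, so by Theorem~\ref{Thm:BC} the action on $\ip{M,e_N}$ is amenable. Finite index then yields the (normal, $G$-equivariant) dual conditional expectation $E_1\colon\ip{M,e_N}\to M$ built from $\widehat{E}(xe_Ny)=xy$, and composing a $G$-expectation $L^\infty(G)\btimes\ip{M,e_N}\to\ip{M,e_N}$ with $E_1$ gives the required expectation onto $M$. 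This center identification is exactly what replaces the basis transport you attempted, and without it (or some genuinely new positivity argument) the proposal does not prove the lemma.
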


\begin{proof}
Denote by $\iota \colon B \rightarrow A$ the inclusion map.
Consider the normal extensions
$\iota'' \colon B''_{\beta} \rightarrow A''_\alpha$,
$E'' \colon A''_{\alpha} \rightarrow B''_{\beta}$ (see Lemma \ref{Lem:extension}).
Observe that $E'' \circ \iota''= \id_{B''_{\beta}}$.
We regard $B''_\beta$ as a $G$-\Ws-subalgebra of $A''_\alpha$ by $\iota''$.
Then $E''\colon A''_{\alpha} \rightarrow B''_{\beta}$ is a normal $G$-conditional expectation.
Since $E$ has finite index, so does $E''$.
Thus the statement follows from the next lemma.
\end{proof}

\begin{Lem}
Let $(N, \beta) \subset (M, \alpha)$ be an inclusion of $G$-\Ws-algebras 
with a normal $G$-conditional expectation $E$ of finite index. 
If $(N, \beta)$ is amenable, then so is $(M, \alpha)$.
\end{Lem}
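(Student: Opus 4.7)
My plan is to transport the amenability of $(N,\beta)$ to $(M,\alpha)$ along the finite-index $G$-conditional expectation $E$. By Theorem \ref{Thm:BC}\,(1) applied to $(N,\beta)$, fix a $G$-equivariant conditional expectation $P \colon L^\infty(G)\btimes N \to N$; the goal is to produce its analogue $\Phi \colon L^\infty(G)\btimes M \to M$.

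As a preliminary step, the amplification $\widetilde E := \id_{L^\infty(G)} \btimes E \colon L^\infty(G)\btimes M \to L^\infty(G)\btimes N$ is again a normal $G$-equivariant conditional expectation, and it inherits the finite-index lower bound from $E$ (using that $E-\lambda\id_M$ is completely positive, which follows from the assumption $E\ge\lambda\id_M$ by the standard tensor-product trick). The composition $\Psi := P \circ \widetilde E \colon L^\infty(G)\btimes M \to N$ is then a $G$-u.c.p.~map, but its restriction to $M$ equals $E$, not $\id_M$. The heart of the proof is to close exactly this gap.

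To close it, I would invoke a Wassermann-type embedding, in parallel with the \Cs-algebraic argument appearing just above. Finite index of $E$ yields a $G$-equivariant realization $M \cong p(N\btimes \IB(\cH))p$ as a corner, where $\cH$ is the Hilbert space built from the right Hilbert $N$-module $L^2(M,E)$, equipped with the canonical $G$-unitary induced by the $G$-equivariance of $E$, and $p$ is a $G$-invariant projection. Amplifying $P$ gives a $G$-conditional expectation
\[
P \btimes \id_{\IB(\cH)} \colon L^\infty(G)\btimes N\btimes\IB(\cH) \to N\btimes\IB(\cH)
\]
(well-defined by Remark \ref{Rem:tensor}), and compressing by $1 \otimes p$ restricts it to the corner, giving the desired $G$-conditional expectation $\Phi \colon L^\infty(G)\btimes M \to M$.

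The main obstacle is setting up the $G$-equivariant Wassermann picture, in particular securing the $G$-invariance of the projection $p$. In the non-equivariant setting, $p$ is built from a Pimsner--Popa quasi-basis, but averaging such a basis over a general locally compact $G$ is not feasible. I would bypass this by the canonical route: realize $\cH$ intrinsically as the $G$-unitary Hilbert space attached to $L^2(M,E)$, identify $M$ with its left action inside the commutant of the right $N$-action, and recognize this commutant as a corner of $N\btimes\IB(\cH)$. In this coordinate-free presentation, $p$ is essentially a Jones-type projection intrinsic to the inclusion, so its $G$-invariance follows automatically from the $G$-equivariance of $E$.
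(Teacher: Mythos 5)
The argument breaks at the Wassermann-type identification: $M$ is \emph{not} isomorphic to a corner $p(N\btimes\IB(\cH))p$, with or without finite index. What your coordinate-free description actually produces is the commutant of the right $N$-action on $L^2(M,E)$, i.e.\ the basic construction $\ip{M,e_N}=(JNJ)'$, and \emph{that} algebra is indeed (equivariantly) a corner of $N\btimes\IB(\cH)$ --- but this holds for an arbitrary inclusion, and $M$ sits properly inside it whenever $N\neq M$ (since $(JMJ)'=(JNJ)'$ would force $N=M$). Already for $N=\IC\subset M=\IC^2$ with $E$ a faithful state, every corner of $\IC\btimes\IB(\cH)=\IB(\cH)$ is a type I factor, so $M$ cannot be such a corner. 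Consequently, compressing $P\btimes\id_{\IB(\cH)}$ by $1\otimes p$ yields a $G$-conditional expectation of $L^\infty(G)\btimes\ip{M,e_N}$ onto $\ip{M,e_N}$; its range is too large, so it is not the map $\Phi\colon L^\infty(G)\btimes M\to M$ you need, and the gap you correctly identified after forming $\Psi=P\circ\widetilde{E}$ has not been closed.

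What is missing is the descent from $\ip{M,e_N}$ to $M$, and this is precisely where the finite-index hypothesis is load-bearing; note that your proof never actually uses it in an essential way (the corner realization of the commutant needs no index assumption), which is itself a warning sign. Pimsner--Popa finiteness of the index provides a normal completely positive map $\widehat{E}\colon\ip{M,e_N}\to M$ with $\widehat{E}(xe_Ny)=xy$ for $x,y\in M$, and $E_1:=\widehat{E}(1)^{-1}\widehat{E}$ is a $G$-equivariant conditional expectation onto $M$ (the dual expectation); composing it with a conditional expectation witnessing the amenability of $\ip{M,e_N}$ finishes the proof. The paper also obtains the amenability of $\ip{M,e_N}$ more cheaply than by your compression: $z\mapsto Jz^\ast J$ is a $G$-isomorphism $\cZ(N)\cong\cZ(\ip{M,e_N})$, so Theorem~\ref{Thm:BC}(2) transfers amenability from $N$ to $\ip{M,e_N}$ directly, sidestepping both the equivariant corner picture and the question of the $G$-invariance of $p$. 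If you insert the dual expectation $E_1$ at the end, your argument becomes essentially the paper's proof.
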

\begin{proof}
For the following, consult 1.1.3 in \cite{Pop} (statements 
$6^\circ$ to $9^\circ$ hold true in general, by replacing a normal faithful state 
$\vp$ there with a normal faithful semifinite weight, which always exists \cite{Tak72}). 
Consider the basic construction of $E$:
\[\ip{M,e_N}=\IB(L^2(M))\cap (JNJ)'.\]
As $E$ is $G$-equivariant, the $G$-action on $M$ extends to a (continuous) $G$-action on $\ip{M, e_N}$
which fixes the Jones projection $e_N$.
Since $E$ is of finite index, 
there is a normal completely positive map
\[\widehat{E} \colon \ip{M,e_N} \rightarrow M\]
satisfying
$\widehat{E}(x e_N y)= xy$ for $x, y\in M$.
The map
\[E_1:= \widehat{E}(1_{\ip{M,e_N}})^{-1} \cdot \widehat{E} \colon \ip{M,e_N} \rightarrow M\]
defines a conditional expectation (called the dual conditional expectation; see Section 1.2 in \cite{Pop} for details).
Since $\cZ(N)\cong\cZ(\ip{M,e_N})$ via the $G$-$\ast$-isomorphism $z\mapsto Jz^\ast J$, the $G$-action on $\ip{M,e_N}$ is amenable.
Since $E_1$ is $G$-equivariant, this shows the amenability of $(M, \alpha)$.
\end{proof}

\begin{proof}[Proof of $(3)\Rightarrow(1)$]
We will give a sequence of $(G \times K)$-u.c.p.~ 
maps 
\[L^\infty(G \times K) \btimes A''_{\alpha\times\beta} \rightarrow L^\infty(G)\btimes A''_{\alpha\times\beta}
\rightarrow L^\infty(G)\btimes (A''_{\alpha\times\beta}\bar\rtimes K)
\to A''_{\alpha\times\beta}\bar\rtimes K
\to A''_{\alpha\times\beta}
\]
whose composite is a conditional expectation onto $A''_{\alpha\times\beta}$. 
Here we equip the \Ws-crossed product $A''_{\alpha\times\beta}\bar\rtimes K$ with the conjugate $K$-action.

The first map is given by
$\id_{L^\infty(G)} \btimes m_K \btimes \id_{A''_{\alpha \times \beta}}$,
where $m_K$ denotes the Haar (probability) measure on $K$.
The second map is the canonical inclusion.

To give the third map, we first take a $G$-conditional expectation 
\[\Psi \colon L^\infty(G) \btimes (A\rtimes K)''_\alpha \rightarrow (A\rtimes K)''_\alpha,\]
which exists by assumption.
As the $K$-action on $(A\rtimes K)''_\alpha$ is inner,
the map $\Psi$ is in fact $(G \times K)$-equivariant.
By the universal property of $(A\rtimes K)''_\alpha$, we have a $G$-invariant central projection $z\in (A\rtimes K)''_\alpha$ with
$A''_{\alpha\times\beta}\bar\rtimes K=z(A\rtimes K)''_\alpha$.
Therefore $\Psi$ restricts to a $(G \times K)$-conditional expectation
\[L^\infty(G) \btimes (A''_{\alpha\times\beta}\bar\rtimes K) \rightarrow A''_{\alpha\times\beta}\bar\rtimes K.\]

To construct the fourth map, choose a local basis $\mathcal{U}$ of $e\in K$
consisting of $K$-conjugation invariant sets. (Such a $\mathcal{U}$ exists by the Peter--Weyl theorem.)
Define $\zeta_U:= m(U)^{-1/2} \chi_{U} \in L^2(K)$ for $U\in \mathcal{U}$.
Take
any point-ultraweak cluster point of the $(G\times K)$-u.c.p.~ maps
$(\id_{A''_{\alpha\times\beta}}\otimes\omega_{\zeta_U})_{U\in \mathcal{U}}$.
Here we identify $A''_{\alpha\times\beta}\bar\rtimes K$
with the \Ws-subalgebra of $A''_{\alpha\times\beta}\btimes \IB(L^2(K))$
in the same way as noted after Theorem \ref{Thm:fixed point}.

Now it is clear from the definitions that the resulting composite 
 is indeed a $(G \times K)$-conditional expectation.
\end{proof}

\section{New constructions of non-commutative amenable actions}\label{Sec:Example}
\subsection{New constructions}
All the previously known constructions \cite{Suzeq} of (really) non-commutative amenable actions
depend on the reduced crossed product construction.
In this section we give a new construction based on the Pimsner construction \cite{Pim97} (see also \cite{Mey}).
Notable advantages of the present construction
are
\begin{itemize}
\item it works for arbitrary locally compact groups,
\item it is easy to control equivariant Kasparov theoretic data \cite{Kas},
\item it does not involve the infinite tensor product construction.
(In particular it provides simple amenable \Cs-dynamics even from \emph{proper} actions.)
\end{itemize}
We say an inclusion $A \subset B$ of \Cs-algebras
is \emph{non-degenerate} if $AB$ is dense in $B$.
\begin{Thm}\label{Thm:Toeplitz}
Let $(A, \alpha)$ be an amenable \Cs-dynamical system.
Then there is a non-degenerate ambient $G$-\Cs-algebra $(B, \beta)$ of $(A, \alpha)$ with the following properties.
\begin{itemize}
\item The action $\beta$ is amenable and outer.
\item The \Cs-algebra $B$ is purely infinite simple.
\item When $A$ is nuclear, so is $B$.
\end{itemize}
When $A$ is separable and $G$ is second countable,
one can further arrange $B$ to be separable, and the inclusion $A \subset B$
to be {\rm KK}$^G$-equivalent.
\end{Thm}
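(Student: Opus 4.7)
The plan is to construct $B$ as the equivariant Cuntz--Pimsner algebra $\Cn{E}$ of a carefully chosen $G$-equivariant \Cs-correspondence $E$ over $A$ via the Pimsner--Meyer machinery \cite{Pim97}, \cite{Mey}. The correspondence $E$ should be faithful, non-degenerate, and full, and chosen to satisfy the standard Pimsner hypotheses ensuring that $\Cn{E}$ is purely infinite simple and that $A$ embeds non-degeneratively inside it. The induced $G$-action on $\Cn{E}$ commutes with the canonical gauge action $\IT \acts \Cn{E}$, so $B$ is naturally a $(G\times\IT)$-\Cs-algebra.

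The heart of the argument is the amenability of $G\acts B$. By Theorem~\ref{Thm:fixed point} applied with the compact group $K=\IT$, it suffices to prove amenability of $G \acts B^{\IT}$. Now $B^{\IT}$ is the usual core of the Pimsner algebra: it is the $G$-equivariant inductive limit of $G$-\Cs-algebras of the form $\cK(\cF_n)$, where $\cF_n$ is a truncated Fock-type module built iteratively from $A$ and $E$. Each $\cK(E^{\otimes k})$ is, up to a $G$-equivariant Morita equivalence, a $G$-hereditary \Cs-subalgebra of $A\otimes\cK$; hence amenability propagates from $A$ to each $\cK(\cF_n)$ by Proposition~\ref{Prop:perm} together with Remark~\ref{Rem:tensor}, and then to $B^{\IT}$ via $G$-inductive limits. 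Applying Theorem~\ref{Thm:fixed point} yields amenability of $G\times\IT\acts B$, and Corollary~\ref{Cor:closed} gives amenability of the restriction $G\acts B$.

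The remaining properties are by-products of the Pimsner--Meyer construction. Pure infiniteness and simplicity of $\Cn{E}$ follow from Pimsner's standard criteria on $E$; nuclearity is preserved because the Pimsner algebra of a nuclear correspondence is nuclear; outerness of $\beta$ is forced by the covariance $\beta_g(T_\xi) = T_{\tilde\alpha_g(\xi)}$ on generating creation operators together with simplicity of $B$; and in the separable, second-countable setting one picks $E$ separable so that $B$ is separable. The $\KK^G$-equivalence of $A \subset B$ follows from Meyer's equivariant Pimsner six-term exact sequence \cite{Mey} applied to an $E$ chosen so that $[E] = \id_A$ in $\KK^G(A,A)$, which makes the connecting boundary map vanish. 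The principal difficulty is the careful identification of $B^{\IT}$ as a $G$-inductive limit of $G$-hereditary subalgebras of stabilizations of $A$: this analysis crucially uses the equivariant theory for non-unital and non-discrete dynamical systems developed in the earlier sections, because the intermediate systems $G\times\IT\acts B$ and $G\acts B\rtimes\IT$ are neither discrete nor unital even when $A$ is.
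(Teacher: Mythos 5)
Your amenability argument is essentially the one in the paper: pass to the gauge fixed-point algebra via Theorem \ref{Thm:fixed point}, identify the building blocks $\cK(\cE^{\otimes n})$ with stabilizations of $A$ (in the paper, $\IK(L^2(G,\cH)^{\otimes n})\otimes A$ with action $\Ad\otimes\alpha$), and conclude with the permanence properties of Proposition \ref{Prop:perm}. One correction there: when the left action of $A$ on the correspondence is not by compacts, the core is \emph{not} an inductive limit of the algebras $\cK(\cF_n)$; it is the inductive limit of the subalgebras $B_{\leq n}$, which are iterated $G$-extensions $0\to \cK(\cE^{\otimes n})\to B_{\leq n}\to B_{\leq n-1}\to 0$, so you must invoke the $G$-extension clause of Proposition \ref{Prop:perm}, not only inductive limits and Morita/hereditary permanence. (Also, the paper gets amenability of $G\acts B$ directly from the equivalence $(4)\Leftrightarrow(2)$ in Theorem \ref{Thm:fixed point}; your detour through $G\times\IT$ and Corollary \ref{Cor:closed} is harmless.)

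The genuine gap is in the choice of $E$ and the ${\rm KK}^G$ claim. Taking $E$ with $[E]=\id_A$ in $\KK^G(A,A)$ does \emph{not} make $A\subset\Cn{E}$ a ${\rm KK}^G$-equivalence: in Pimsner's exact sequence the map $1-[E]$ then vanishes, so $\KK_*(D,\Cn{E})\cong \KK_*(D,A)\oplus\KK_{*+1}(D,A)$; concretely, $E=A$ gives $\Cn{E}\cong A\otimes C(\IT)$, whose K-theory is doubled. The device used in the paper is different: fix a faithful non-degenerate representation $\pi\colon A\to\IB(\cH)$ with $\pi(A)\cap\IK(\cH)=0$ and set $\cE:=L^2(G,\cH)\otimes A$ with left action $\tilde\pi\otimes 1$ and $G$-action $(\lambda\otimes\id_\cH)\otimes\alpha$; then $\varphi(A)\cap\IK(\cE)=0$, so the Toeplitz algebra coincides with the Cuntz--Pimsner algebra, the inclusion $A\subset B$ is automatically ${\rm KK}^G$-equivalent (Pimsner/Meyer), and simplicity, pure infiniteness, nuclearity, outerness, and separability are exactly what Meyer's Theorem 2.1 (building on Pimsner and Kumjian) delivers for this correspondence. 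Your appeal to ``standard Pimsner hypotheses'' leaves these unverified, and the claim that outerness is forced by covariance on creation operators together with simplicity of $B$ is not a valid argument (simple \Cs-algebras admit inner actions); outerness, too, comes from the specific choice of $\cE$. That same choice is what makes your Morita-type identification of the core pieces exact, namely $B_n\cong\IK(L^2(G,\cH)^{\otimes n})\otimes A$.
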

\begin{proof}
We show that Meyer's construction \cite{Mey} gives the desired $G$-\Cs-algebra $(B, \beta)$.
The statements except the amenability of $(B, \beta)$ are shown in the proof of Theorem 2.1 in \cite{Mey} (cf.~\cite{Pim97}, \cite{Kum}).
To show the amenability of the resulting $G$-\Cs-algebra, we first recall the construction of $(B, \beta)$.

Take a non-degenerate faithful $\ast$-representation $\pi \colon A \rightarrow \mathbb{B}(\cH)$ on a Hilbert space $\cH$ with $\pi(A) \cap \mathbb{K}(\cH)=0$.
When $A$ is separable, we may choose a separable $\cH$.
Define $\tilde{\pi} \colon A \rightarrow \mathbb{B}(L^2(G, \cH))$
to be
\[(\tilde{\pi}(a)\xi)(g):=\pi(\alpha_{g}^{-1}(a))(\xi(g)) \quad {\rm~for~}\xi \in C_c(G, \cH),~ a\in A,~ g\in G.\]
Set $\cE:= L^2(G, \cH) \otimes A$,
a right Hilbert $A$-module.
On $\cE$, we equip the left $A$-action $\varphi \colon A \rightarrow \IB(\cE)$ and the isometric $G$-action $\gamma$ as follows.
\[\varphi:= \tilde{\pi} \otimes 1,\quad \gamma:= (\lambda \otimes \id_{\cH}) \otimes \alpha.\]
Then $\varphi$ and $\gamma$
make $\cE$ a $G$-\Cs-correspondence over $A$.
By the choice of $\pi$,
\[I_{\cE}=\varphi(A) \cap \mathbb{K}(\cE) = 0.\]

From now on we freely use the facts and notations on the Toeplitz--Pimsner algebras
recorded in Section 4.6 of \cite{BO}.
Let $B:= \mathcal{T}_{\cE}$ be the Toeplitz--Pimsner algebra of $\cE$ \cite{Pim97}.
We equip $B$ with the $G$-action $\beta$ induced from $\gamma$;
 $\beta_g(T_\xi):= T_{\gamma_g(\xi)}$ for $g\in G$, $\xi \in \cE$.
The canonical embedding $A \rightarrow B$ is non-degenerate and $G$-equivariant.
We will show the amenability of $(B, \beta)$.

Let $\sigma \colon \IT \curvearrowright B$ be the gauge action.
Clearly $\sigma$ commutes with $\beta$.
By Theorem \ref{Thm:fixed point}, it suffices to show the amenability of
$(B^\sigma, \beta)$.

Recall that
\[B^\sigma = \overline{\rm span}\{ T_\xi T_\eta^\ast: \xi, \eta \in \cE^{\otimes n}, n\in\IZ_{\geq 0}\}.\]
For each $n\in\IZ_{\geq 0}$,
set \[B_{\leq n}:=\overline{\rm span} \{ T_\xi T_\eta^\ast: \xi, \eta \in \cE^{\otimes k}, k\leq n\},\quad B_{n}:=\overline{\rm span} \{ T_\xi T_\eta^\ast: \xi, \eta \in \cE^{\otimes n}\}.\]
Observe that both $B_{\leq n}$ and $B_{n}$ are $G$-\Cs-subalgebras of $B$.
Moreover $B^\sigma$ is (the closure of) the directed union of $B_{\leq n}$'s.
Therefore, by Proposition \ref{Prop:perm}, it suffices to show
the amenability of $(B_{\leq n}, \beta)$'s.

As $B_{\leq 0}= A$, the case $n=0$ is the assumption.
For a given $n\in \mathbb{N}$, assume that $B_{\leq n-1}$ is amenable.
Recall that we have a short exact sequence
\[0 \rightarrow B_{n} \rightarrow B_{\leq n} \rightarrow B_{\leq n-1} \rightarrow 0\]
which is obviously $G$-equivariant.
Observe that
$B_{n} \cong \mathbb{K}(\cE^{\otimes n})$ as $G$-\Cs-algebras.
Note that $\cE^{\otimes n} \cong L^2(G, \cH) ^{\otimes n} \otimes A$ as $G$-\Cs-correspondences over $A$.
(Indeed, when $n=2$, the map
\[[\xi \otimes a] \otimes _A [\eta \otimes b] \mapsto [\xi \otimes \tilde{\pi}(a)\eta]\otimes b;\quad \xi, \eta \in L^2(G, \cH), a, b \in A\]
extends to the desired isomorphism. The statement for general $n \in \IN$ is shown in a similar way.)
Therefore
\[(B_n, \beta) \cong (\mathbb{K}(L^2(G, \cH)^{\otimes n})\otimes A, \Ad_{(\lambda \otimes \mathrm{id}_{\cH})^{\otimes n}} \otimes \alpha).\]
This isomorphism shows the amenability of $(B_{n}, \beta)$.
\end{proof}
\subsection{Consequences}
As Theorem \ref{Thm:Toeplitz} is applied to many \Cs-dynamics,
we naturally obtain various new examples of \emph{simple} amenable \Cs-dynamics.
Here we list significant examples among them.

The next result improves the main results of \cite{Suzeq}.
This gives a more complete answer to questions of Anantharaman-Delaroche (\cite{AD02}, Question 9.2 (c), (d)).
\begin{Cor}
The following statements hold true.
\begin{enumerate}[\upshape(1)]
\item
For every non-amenable exact group $G$, there is a unital inclusion $(A, \alpha) \subset (B, \beta)$ of unital $G$-\Cs-algebras with the following properties:
\begin{itemize}
\item Both $A$ and $B$ are purely infinite simple, nuclear.
\item The action $\alpha$ is amenable.
\item $B \rtimes G \neq B \rc G$.
\end{itemize}
\item
For every non-amenable group $G$, there is a non-degenerate inclusion $(A, \alpha) \subset (B, \beta)$ of $G$-\Cs-algebras with the following properties:
\begin{itemize}
\item Both $A$ and $B$ are purely infinite simple, nuclear.
\item The action $\alpha$ is amenable.
\item $B \rtimes G \neq B \rc G$.
\end{itemize}
\end{enumerate}
\end{Cor}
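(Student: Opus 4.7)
The plan is to deduce both parts of the corollary by combining Theorem \ref{Thm:Toeplitz} with a tensor-product construction: the amenable side $(A,\alpha)$ of the inclusion will be produced via Theorem \ref{Thm:Toeplitz} (together with Suzuki's earlier work in case~(1)), and the ambient $(B,\beta)$ will be obtained by tensoring $A$ with a Kirchberg algebra carrying a non-amenable $G$-action, so that $\beta$ inherits non-amenability and hence $B\rtimes G\neq B\rc G$.

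First I would produce $(A,\alpha)$. For part~(1), where $G$ is non-amenable and exact, the construction in \cite{Suzeq}, \cite{Suz20c} yields a unital Kirchberg algebra with an amenable $G$-action; alternatively Theorem \ref{Thm:Toeplitz} applied to such an input refines this if needed. For part~(2), where $G$ is an arbitrary non-amenable (possibly non-exact) group, Theorem \ref{Thm:Toeplitz} applied to the trivial system $(\IC,\id)$ yields a non-unital Kirchberg algebra $A$ with an amenable outer $G$-action $\alpha$.

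Next, to construct $(B,\beta)$, I would fix a unital Kirchberg algebra $D$ carrying a non-amenable $G$-action $\delta$. Such a $D$ exists for any non-amenable $G$: the Bernoulli shift $G\curvearrowright\bigotimes_{g\in G}\mathcal{O}_\infty$ works, its non-amenability being inherited from the Bernoulli shift on the UHF algebra $\bigotimes_G M_2(\IC)$ via the canonical $G$-equivariant inclusion (a standard Connes--Zimmer type argument for Bernoulli shifts of non-amenable groups). I would set $B:=A\otimes D$ with the diagonal action $\beta:=\alpha\otimes\delta$, and embed $A\hookrightarrow B$ by $a\mapsto a\otimes 1_D$. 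The minimal tensor product of Kirchberg algebras is Kirchberg, so $B$ has the required structural properties, and the inclusion is unital in case~(1), non-degenerate in case~(2).

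The crux of the argument is verifying that $\beta$ is non-amenable, equivalently $B\rtimes G\neq B\rc G$. The strategy is the contrapositive: assuming amenability of $\beta=\alpha\otimes\delta$, derive amenability of $\delta$, contradicting its construction. Since a Kirchberg algebra admits no $G$-invariant tracial state, one cannot simply slice with such a state; instead I would use the QAP for $\alpha$ (Theorem \ref{Thm:QAP}). The net $(\xi_i)\subset L^2(G,A)$ witnessing amenability of $\alpha$ yields, via the inner-product pairing in the $A$-variable, a net of asymptotically $G$-equivariant c.c.p.\ maps $B=A\otimes D\to D$ intertwining $\beta$ with $\delta$ up to vanishing error. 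Transferring the hypothetical amenability of $\beta$ along the cluster point of these maps (via the stability of amenability along ranges of $G$-equivariant c.c.p.\ maps in Proposition \ref{Prop:perm}) then produces amenability of $\delta$, the desired contradiction. Setting up this slice-by-amenable-mean rigorously—so that the maps are genuinely $G$-c.c.p.\ at the limit and the range of the limit really carries $\delta$—is the main technical obstacle and will require an argument in the spirit of the $(1)\Rightarrow(3)$ step in the proof of Theorem~\ref{Thm:QAP}.
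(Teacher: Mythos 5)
Your central construction does not work. If $\alpha\colon G\acts A$ is amenable, then the diagonal action $\alpha\otimes\delta$ on $A\otimes D$ is \emph{automatically amenable for every} $(D,\delta)$: a net $(\xi_i)$ witnessing the QAP for $A$ gives, after cutting by a quasi-central approximate unit of $A\otimes D$, a net $\xi_i\otimes 1$ witnessing the QAP for $A\otimes D$, since $[\xi_i\otimes 1,\,a\otimes d]=[\xi_i,a]\otimes d\to 0$ and $\ip{\xi_i\otimes 1,\tilde{\alpha}_g(\xi_i)\otimes 1}=\ip{\xi_i,\tilde{\alpha}_g(\xi_i)}\otimes 1\to 1$ strictly. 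So your $B=A\otimes D$ satisfies $B\rtimes G=B\rc G$, the exact opposite of what is required; and the slicing argument you sketch, were it to succeed, would prove the false implication ``$\alpha\otimes\delta$ amenable $\Rightarrow$ $\delta$ amenable.'' There are two further problems. First, even if you could arrange $\beta$ non-amenable, the corollary asks for $B\rtimes G\neq B\rc G$; non-amenability does not imply failure of weak containment (only the converse holds --- see Theorem \ref{Thm:exotic} and Proposition B of \cite{Suzeq}), so the contrapositive-of-amenability strategy targets the wrong invariant. Second, in part (2) you feed the trivial system $(\IC,\mathrm{id})$ into Theorem \ref{Thm:Toeplitz}, but that system is amenable only when $G$ is amenable (a $G$-conditional expectation $L^\infty(G)\to\IC$ is an invariant mean), so the theorem's hypothesis fails; the correct input is the proper, hence always amenable, translation action $G\acts C_0(G)$.

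The paper's route is different and directly attacks full-versus-reduced. Starting from an amenable action $\alpha_0\colon G\acts A_0$ on a unital nuclear \Cs-algebra (exactness for (1), $C_0(G)$ for (2)), one takes a faithful covariant representation $(\pi,u)$ on $\cH$ and sets $B_0:=\pi(A_0)+\IK(\cH)$ with the action $\Ad_u$. Because the restriction to the ideal $\IK(\cH)$ is inner and $G$ is non-amenable, $B_0\rtimes G\neq B_0\rc G$. One then applies the Toeplitz--Pimsner construction of Theorem \ref{Thm:Toeplitz} to the compatible representations of $A_0$ and $B_0$ on $\bigoplus_{\IN}\cH$, obtaining an inclusion $A\subset B$ of Kirchberg algebras in which $\alpha$ is amenable (input $A_0$ amenable) while the canonical $G$-equivariant conditional expectation $B\to B_0$ makes $B_0\rtimes G\to B\rtimes G$ injective and hence forces $B\rtimes G\neq B\rc G$. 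If you want to salvage a tensor-type idea, you must replace $A\otimes D$ by a construction, like this one, in which the obstruction lives in a $G$-invariant ideal or expected subalgebra whose full and reduced crossed products provably differ.
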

\begin{proof}
We only show statement (1). Statement (2) is shown in 
the same way after using the proper action $G \curvearrowright G$ instead of $\alpha_0$ below.

Since $G$ is exact, by Proposition \ref{Prop:exact}, one can choose an amenable action $\alpha_0 \colon G \curvearrowright A_0$ on a unital nuclear \Cs-algebra.
Choose a unital faithful covariant representation $(\pi, u)$ 
of $A_0$ on a Hilbert space $\cH$.
Set $B_0:= \pi(A_0) + \mathbb{K}(\cH)$.
We equip $B_0$ with the $G$-action $\beta_0:= \Ad_u$.
Then as the restriction action $G \curvearrowright \mathbb{K}(\cH)$ is inner,
$B_0 \rtimes G \neq B_0 \rc G$ (see e.g., Lemma 3.1 of \cite{Suzeq}).
Applying the construction in Theorem \ref{Thm:Toeplitz}
to the $\ast$-representations $\bigoplus_\mathbb{N} \pi$ and $\bigoplus_\mathbb{N} \id_{B_0}$ on $\bigoplus_{\mathbb{N}} \cH$ respectively,
we obtain a unital inclusion $(A, \alpha) \subset (B, \beta)$ of the resulting $G$-\Cs-algebras.
As in the statement, both $A$ and $B$ are purely infinite simple nuclear,
and $\alpha$ is amenable.
Furthermore, since the canonical conditional expectation $B \rightarrow B_0$
is $G$-equivariant,
the canonical map $B_0 \rtimes G \rightarrow B \rtimes G$ is injective.
(This follows from Exercise 4.1.4 of \cite{BO} in the discrete group case,
and the same proof works for general locally compact groups.)
Thus $B \rtimes G \neq B \rc G$.
\end{proof}
\subsection*{On amenable models in equivariant Kasparov category}
Recently the second author obtained the first classification result of
\Cs-dynamics beyond amenable groups \cite{Suz20c}.
This motivates to generalize Izumi's conjectures (\cite{IICM}, \cite{IM}, \cite{IM2})
to \emph{amenable actions}.
Recall that, roughly speaking, the original conjecture claims
a bijective correspondence between
(torsion free) \emph{amenable group} actions on Kirchberg algebras (up to appropriate identifications) and certain topological invariants.
The surjectivity part of the original conjecture was recently confirmed by Meyer \cite{Mey}.
Here we prove the amenable action analogue of the surjectivity part for groups with the \emph{Haagerup property} in the stabilized case.
(We refer the reader to the book \cite{CC+} for groups with the Haagerup property.)
This is a consequence of Theorem \ref{Thm:Toeplitz}, a significant theorem of Higson-Kasparov \cite{HK}, and Meyer's results \cite{Mey}.
By (the proofs of) Theorems 3.7 and 3.10 in \cite{Mey},
it suffices to show the following statement.
\begin{Cor}\label{Cor:Hage}
Let $G$ be a second countable group with the Haagerup property.
Then there is an amenable action $\alpha \colon G \curvearrowright \mathcal{O}_\infty \otimes \IK$ with $(\mathcal{O}_\infty\otimes \IK, \alpha) \sim_{\KK^G} \IC$.
\end{Cor}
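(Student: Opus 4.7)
The plan is to combine the Higson--Kasparov theorem, Theorem~\ref{Thm:Toeplitz}, and the non-equivariant Kirchberg--Phillips classification. Since $G$ is second countable with the Haagerup property, the Higson--Kasparov theorem \cite{HK} furnishes a separable, nuclear, amenable $G$-\Cs-algebra $(A_0, \alpha_0)$ with $(A_0, \alpha_0) \sim_{\KK^G} \IC$; amenability follows from the fact that $A_0$ can be built as an inductive limit of proper $G$-\Cs-algebras (alternatively, one invokes Tu's strong form of Baum--Connes and takes $A_0 = C_0(X)$ for a suitable proper $G$-space $X$). Applying Theorem~\ref{Thm:Toeplitz} to $(A_0, \alpha_0)$ then yields a separable, nuclear, purely infinite, simple $G$-\Cs-algebra $(B, \beta)$ with amenable $\beta$, together with a non-degenerate $G$-embedding $A_0 \subset B$ inducing a $\KK^G$-equivalence. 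In particular $(B, \beta) \sim_{\KK^G} \IC$, so $B$ is a Kirchberg algebra in the UCT class with $K_0(B) \cong \IZ$ and $K_1(B) = 0$.

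By the (non-equivariant) Kirchberg--Phillips classification, there is a $\ast$-isomorphism $\phi \colon B \otimes \IK \to \Cn{\infty} \otimes \IK$. Transport the diagonal action along $\phi$: set $\alpha_g := \phi \circ (\beta_g \otimes \id_\IK) \circ \phi^{-1}$. The action $\beta \otimes \id_\IK$ on $B \otimes \IK$ is amenable, because its universal enveloping $G$-\Ws-algebra is $B''_\beta \btimes \IB(\ell^2)$, whose center agrees $G$-equivariantly with $\cZ(B''_\beta)$; hence amenability reduces to that of $(B, \beta)$ by Theorem~\ref{Thm:BC}. Since $\phi$ is a $\ast$-isomorphism, $\alpha$ is an amenable action on $\Cn{\infty} \otimes \IK$, and
\[
(\Cn{\infty} \otimes \IK, \alpha) \sim_{\KK^G} (B \otimes \IK, \beta \otimes \id_\IK) \sim_{\KK^G} (B, \beta) \sim_{\KK^G} \IC
\]
by construction.

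The main subtle point is that a $G$-equivariant Kirchberg--Phillips classification is not available in general, but it is also not needed here: only the existence of a non-equivariant $\ast$-isomorphism $\phi$ is invoked, and transport along $\phi$ preserves both amenability and the $\KK^G$-class, which is precisely what the corollary requires. The role of Theorem~\ref{Thm:Toeplitz} is crucial: it is the engine that converts the (commutative, proper) Higson--Kasparov model into a \emph{Kirchberg} model while keeping amenability and the $\KK^G$-class intact.
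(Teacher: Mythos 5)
Your proposal is correct and follows essentially the same route as the paper: the Higson--Kasparov algebra $\mathcal{A}(H)\sim_{\KK^G}\IC$ (amenable since proper), fed into Theorem~\ref{Thm:Toeplitz} to produce an amenable Kirchberg model, then identified with $\Cn{\infty}\otimes\IK$ via the non-equivariant Kirchberg--Phillips theorem. The extra details you supply (stabilizing, transporting the action along the isomorphism, and checking amenability of $\beta\otimes\id_\IK$, which also follows from Proposition~\ref{Prop:perm} by writing $B\otimes\IK$ as the $G$-inductive limit of the $B\otimes\IM_n$) are exactly what the paper leaves implicit.
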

\begin{proof}
A deep result of Higson--Kasparov
provides a separable nuclear proper $G$-\Cs-algebra $\mathcal{A}(H)$
with $\mathcal{A}(H) \sim_{\KK^G} \mathbb{C}$ (\cite{HK}, Theorems 8.5, 8.6).
As proper $G$-\Cs-algebras are amenable,
Theorem \ref{Thm:Toeplitz}, applied to $\mathcal{A}(H)$,
together with the Kirchberg--Phillips classification theorem \cite{Kir}, \cite{Phi},
gives the desired \Cs-dynamics.
\end{proof}

For free groups, the analogous result holds true in the unital case.
The next theorem is included in the proof of Theorem 5.1 in \cite{Suz19}. By applying Theorem \ref{Thm:Toeplitz}, we obtain a new simpler proof.
\begin{Cor}[\cite{Suz19}]\label{Cor:free}
Let $\Gamma$ be a countable free group.
Then there is an amenable action $\alpha \colon \Gamma \curvearrowright \mathcal{O}_\infty$
with $(\mathcal{O}_\infty, \alpha) \sim_{\KK^\Gamma} \mathbb{C}$.
\end{Cor}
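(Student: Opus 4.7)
The plan is to apply Theorem \ref{Thm:Toeplitz} exactly as in the proof of Corollary \ref{Cor:Hage}, but starting from a \emph{unital} amenable datum so that the output is itself unital and may be identified with $\mathcal{O}_\infty$. More precisely, suppose one can produce a unital, separable, nuclear, amenable $\Gamma$-\Cs-algebra $(A_0, \alpha_0)$ with $(A_0, \alpha_0) \sim_{\KK^\Gamma} \IC$. Then Theorem \ref{Thm:Toeplitz} yields a non-degenerate ambient $\Gamma$-\Cs-algebra $(B, \beta)$ which is separable, nuclear, purely infinite simple, carries an amenable and outer $\Gamma$-action, and whose inclusion $A_0 \subset B$ is a $\KK^\Gamma$-equivalence. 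Since $A_0$ is unital and the inclusion is non-degenerate, $1_{A_0}$ automatically acts as a two-sided unit on $B$, so $B$ is unital; moreover $(B, \beta) \sim_{\KK^\Gamma} \IC$.

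From here, the Kirchberg--Phillips classification theorem identifies $B$ (as an abstract \Cs-algebra) with the unique unital separable Kirchberg algebra in the UCT class having $K_0 = \IZ$ with $[1_B] \mapsto 1$ and $K_1 = 0$, namely $\mathcal{O}_\infty$. Transporting $\beta$ through any such identification furnishes the desired amenable action $\alpha \colon \Gamma \acts \mathcal{O}_\infty$ with $(\mathcal{O}_\infty, \alpha) \sim_{\KK^\Gamma} \IC$.

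The main obstacle is the construction of the unital input $(A_0, \alpha_0)$. Higson--Kasparov provides a separable nuclear \emph{proper} (hence amenable) $\Gamma$-\Cs-algebra $\mathcal{A}$ with $\mathcal{A} \sim_{\KK^\Gamma} \IC$, but this $\mathcal{A}$ is non-unital --- this is precisely why Corollary \ref{Cor:Hage} produces a stable Kirchberg algebra rather than $\mathcal{O}_\infty$ itself. To upgrade to a unital model, one exploits the tree structure of a free group: $\Gamma$ acts topologically amenably on the Gromov compactification $\bar{T} = T \sqcup \partial T$ of its Cayley tree $T$, combining the proper action on $T$ with the amenable boundary action on $\partial T$. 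The hard step is to package this topological information into a unital amenable $\Gamma$-\Cs-algebra whose $\KK^\Gamma$-class is $\IC$; this can be approached via a Dirac--dual-Dirac argument using the Higson--Kasparov element together with the six-term $\KK^\Gamma$-exact sequence associated to $0 \to C_0(T) \to C(\bar T) \to C(\partial T) \to 0$, the subtlety being that neither $\bar T$ is $\Gamma$-equivariantly contractible nor does $\partial T$ carry a $\Gamma$-invariant probability measure. Once $A_0$ is in hand, the rest of the argument is a direct application of Theorem \ref{Thm:Toeplitz} combined with Kirchberg--Phillips, exactly parallel to Corollary \ref{Cor:Hage}.
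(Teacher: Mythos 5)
There is a genuine gap: the entire weight of your argument rests on the existence of a \emph{unital} separable nuclear amenable $\Gamma$-\Cs-algebra $(A_0,\alpha_0)$ with $(A_0,\alpha_0)\sim_{\KK^\Gamma}\IC$, and you do not construct it. The sketch via the compactified tree is only a proposal of an approach: you yourself flag that $\bar T$ is not equivariantly contractible and that the boundary action has no invariant measure, and these are not minor subtleties. Indeed $C(\partial T)$ is far from $\KK^\Gamma$-trivial (its crossed product is a Cuntz--Krieger algebra with nonzero, in general torsion, $K$-theory), so the six-term sequence for $0\to C_0(T)\to C(\bar T)\to C(\partial T)\to 0$ does not obviously yield $C(\bar T)\sim_{\KK^\Gamma}\IC$, and no Dirac--dual-Dirac computation is actually carried out. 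A secondary unverified point: even granting such an $A_0$, to conclude $B\cong\mathcal{O}_\infty$ you need $[1_B]$ to be a \emph{generator} of $K_0(B)\cong\IZ$, i.e.\ that the $\KK^\Gamma$-equivalence $A_0\sim\IC$ sends $[1_{A_0}]$ to $\pm1$; this is an extra condition you assert but do not arrange.

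The paper's proof avoids the need for a unital amenable input altogether, and this is where the real work lies. One realizes $\Gamma$ as a discrete subgroup of $\SL_2(\IR)$, so that $\Gamma\curvearrowright \SL_2(\IR)/\SO(2)\cong\IR^2$ is amenable, and applies Theorem \ref{Thm:Toeplitz} to $(C_0(\IR^2),\alpha)$ to get a $\Gamma$-Kirchberg algebra $(B,\beta)$ with $(B,\beta)\sim_{\KK^\Gamma}(C_0(\IR^2),\alpha)$; by Bott periodicity $B\sim_{\KK}\IC$ and the induced action on $K_0(B)$ is trivial. The two ingredients your proposal lacks then enter: (i) a cocycle perturbation special to free groups --- choosing, for each free generator $s$, a unitary $u(s)\in\mathcal U(\cM(B))$ with $\ad_{u(s)}(\beta_s(p))=p$ for a projection $p$ generating $K_0(B)$, which defines a new action $\zeta$ (no relations to check since $\Gamma$ is free) fixing $p$; and (ii) Theorem 8.5 of \cite{MN}, which upgrades the unital $\Gamma$-inclusion $\IC\subset pBp$ from a $\KK$-equivalence to a $\KK^\Gamma$-equivalence. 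Kirchberg--Phillips then gives $pBp\cong\mathcal{O}_\infty$. Note the paper's remark that this cocycle perturbation is exactly what fails for general (possibly non-exact) Haagerup groups, which is why Corollary \ref{Cor:Hage} only reaches the stabilized statement --- the same obstruction your route runs into when trying to force unitality at the level of the input.
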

\begin{proof}
We realize $\Gamma$ as a discrete subgroup of $\SL_2(\mathbb{R})$.
Then the left translation action $\Gamma \curvearrowright \SL_2(\mathbb{R})/\SO(2)$
is amenable. As $\SL_2(\mathbb{R})/\SO(2) \cong \mathbb{R}^2$,
we obtain an amenable action
$\alpha \colon \Gamma \curvearrowright \mathbb{R}^2$.
Applying Theorem \ref{Thm:Toeplitz} to $(C_0(\mathbb{R}^2), \alpha)$,
we obtain a $\Gamma$-Kirchberg algebra $(B, \beta)$
with $(B, \beta) \sim_{\mathrm{KK}^\Gamma} (C_0(\mathbb{R}^2), \alpha)$.
In particular $B \sim_{\mathrm{KK}} \mathbb{C}$
by the Bott periodicity.
Moreover the induced action $\Gamma \curvearrowright \mathrm{K}_0(B)$ is trivial.
Take a free basis $S$ of $\Gamma$.
Choose a projection $p \in B$ whose class $[p]_0$ generates $K_0(B)$ \cite{Cun}.
By \cite{Cun}, for each $s\in S$, one can take
$u(s)\in \mathcal{U}(\mathcal{M}(B))$ with $\ad_{u(s)}(\beta_s(p))=p$.
Define a new action
$\zeta \colon \Gamma \curvearrowright B$ to be 
$\zeta_s:= \ad_{u(s)}\circ \beta_s$ for $s\in S$.
Then $p$ is $\zeta$-invariant.
As the unital $\Gamma$-inclusion
$\mathbb{C} \subset pBp$ is KK-equivalent,
by Theorem 8.5 of \cite{MN},
the inclusion is in fact KK$^\Gamma$-equivalent.
By the Kirchberg--Phillips classification theorem \cite{Kir}, \cite{Phi},
we have $pBp \cong \mathcal{O}_\infty$.
\end{proof}
We remark that there are non-exact groups with the Haagerup property \cite{AO}.
Therefore the cocycle perturbation argument used in the above proof
does not work to such groups.

The following observation gives
an obstruction to the existence of an amenable representative
in the category $\mathrm{KK}^G$.
\begin{Prop}\label{Prop:K-isom}
Let $G$ be a locally compact second countable group.
Let $A$ be a separable $G$-\Cs-algebra.
If there is an amenable $G$-\Cs-algebra $B$
satisfying $A\sim_{\mathrm{KK}^G} B$,
then the quotient map $q_A \colon A \rtimes G \rightarrow A \rc G$
is $\KK$-equivalent.
\end{Prop}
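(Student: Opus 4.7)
The plan is to reduce the statement to a diagram chase in the KK-category built from three ingredients: (i) the amenability of $B$ forces the full and reduced crossed product of $B$ to coincide; (ii) Kasparov's descent carries KK$^G$-equivalences to KK-equivalences of both crossed products; and (iii) the quotient map from the full to the reduced crossed product is natural. I would first observe that by the standard equivalence between amenability of the action and weak containment (for amenable $G$-\Cs-algebras, the full crossed product equals the reduced one), the map $q_B\colon B\rtimes G\to B\rc G$ is a $\ast$-isomorphism, so $[q_B]\in\KK(B\rtimes G, B\rc G)$ is invertible.

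Next, pick mutually inverse KK$^G$-elements $x\in\KK^G(A,B)$ and $y\in\KK^G(B,A)$. Applying Kasparov's full and reduced descent maps $j_G\colon\KK^G(A,B)\to\KK(A\rtimes G,\, B\rtimes G)$ and $j_{G,\mathrm{r}}\colon\KK^G(A,B)\to\KK(A\rc G,\, B\rc G)$, we obtain KK-equivalences $j_G(x)$ and $j_{G,\mathrm{r}}(x)$ with inverses $j_G(y)$ and $j_{G,\mathrm{r}}(y)$. The key compatibility I would then invoke is the naturality at the KK-level:
\[
j_G(x)\cdot [q_B] \;=\; [q_A]\cdot j_{G,\mathrm{r}}(x) \quad\text{in}\quad \KK(A\rtimes G,\, B\rc G).
\]
Granting this, one reads off
\[
[q_A] \;=\; j_G(x)\cdot [q_B]\cdot j_{G,\mathrm{r}}(y),
\]
a composition of three KK-equivalences, so $[q_A]$ is a KK-equivalence.

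The step I expect to be the main obstacle—or at least the one requiring the most care—is establishing the naturality square in KK. At the level of $\ast$-homomorphisms the quotient map $q_{(-)}$ is obviously a natural transformation, and since $G$ is second countable and all algebras are separable, any class in $\KK^G(A,B)$ can be represented by a $G$-equivariant Cuntz pair (or an equivariant Kasparov bimodule), and Kasparov's descent is constructed by applying the same pair/bimodule construction over the two crossed products. The quotient map intertwines these two constructions essentially tautologically, so the square commutes on representatives; it remains only to check that this is compatible with the Kasparov equivalence relation. This is folklore but not spelled out in most textbooks; I would cite (or briefly reprove) the appropriate statement from Kasparov's equivariant KK-theory, noting that the same argument works verbatim in the locally compact second countable setting once one fixes a Haar measure and works with the appropriate $G$-Hilbert modules.
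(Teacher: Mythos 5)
Your proposal is correct and follows essentially the same route as the paper: the paper's proof is exactly the commutative square in $\KK$ relating $q_A$ and $q_B$ via Kasparov's full and reduced descent functors $j^G$ and $j^G_r$ applied to a $\KK^G$-equivalence, together with the fact that amenability of $B$ makes $q_B$ an isomorphism. Your version merely spells out the resulting two-out-of-three diagram chase and the naturality of $q$ in more detail.
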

\begin{proof}
Let $x \in \mathrm{KK}^G(A, B)$ be a KK$^G$-equivalence.
Then we have the following commutative diagram in ${\rm KK}$:
\[
 \begin{CD}
 A \rtimes G \ @>{j^G(x)}>> B \rtimes G \\
 @V{q_{A}} VV @V{q_{B}}VV \\
 A \rc G @>{j_r^G(x)}>> B \rc G.
 \end{CD}
\]
Here $j^G, j_r^G \colon \KK^G \rightarrow \KK$ denote Kasparov's full and reduced
crossed product functors \cite{Kas} respectively.
Since $x$ is a KK$^G$-equivalence, the row maps are isomorphisms.
As $B$ is amenable, the map $q_B$
is an isomorphism. Thus $q_{A}$ is a $\KK$-equivalence.
\end{proof}
Thus groups satisfying the statement of Corollary \ref{Cor:Hage}
must be K-amenable \cite{CunK}.
\subsection{Exotic construction}
Here we give yet another construction of amenable actions on simple \Cs-algebras.
The construction is an improvement of the one used in the proof of Proposition B in \cite{Suzeq}.

\begin{Thm}\label{Thm:exotic}
Any locally compact group $G$ admits
an action
$\alpha \colon G \curvearrowright A$ on a simple nuclear \Cs-algebra
such that
$A \rtimes_\alpha G = A \rca{\alpha} G$
and that $A \rca{\alpha} G$ is nuclear.
\end{Thm}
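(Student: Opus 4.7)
The plan is to apply Theorem \ref{Thm:Toeplitz} to the most economical amenable \Cs-dynamical system available for an arbitrary locally compact group $G$, namely the left translation action $\tau\colon G\acts C_0(G)$.

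First, I verify that $(C_0(G),\tau)$ satisfies the hypotheses of Theorem \ref{Thm:Toeplitz}: the action $\tau$ is proper, and proper $G$-\Cs-algebras are amenable (as already used, e.g., in the proof of Corollary \ref{Cor:Hage}); and $C_0(G)$ is commutative, hence nuclear. Applying Theorem \ref{Thm:Toeplitz} to $(C_0(G),\tau)$ yields a $G$-\Cs-algebra $(A,\alpha)$ with $A$ purely infinite simple and nuclear and $\alpha$ amenable and outer.

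Amenability of $\alpha$ now supplies the two remaining properties directly. Since $\alpha$ is amenable, the canonical surjection $A\rtimes_\alpha G\to A\rca{\alpha} G$ is an isomorphism (weak containment for amenable actions; see, e.g., \cite{AD02}, \cite{BEW2}). Moreover, since $A$ is nuclear and $\alpha$ is amenable, the reduced crossed product $A\rca{\alpha} G$ is nuclear.

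There is no serious obstacle once Theorem \ref{Thm:Toeplitz} is in hand; the substantive content of the present theorem is really the uniformity of the construction across \emph{all} locally compact $G$, with no exactness, discreteness, or second-countability hypothesis. This is somewhat striking for non-exact $G$, because such groups admit no amenable action on any unital commutative \Cs-algebra, yet the above still produces an amenable action on a simple nuclear (non-unital) \Cs-algebra whose full and reduced crossed products coincide and are nuclear.
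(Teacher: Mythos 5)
Your proof is correct, but it is a genuinely different route from the one in the paper. You feed the proper system $(C_0(G),\tau)$ into Theorem~\ref{Thm:Toeplitz} and then invoke two general facts about amenable actions of locally compact groups: amenability implies weak containment ($A\rtimes G=A\rca{}G$), and amenability plus nuclearity of the coefficient algebra implies nuclearity of the reduced crossed product. Both facts are indeed available in the literature in full generality (via the Exel--Ng approximation property, \cite{EN}, \cite{BEW2}, combined with Theorem~\ref{Thm:AP}/Theorem~\ref{Thm:QAP}), so there is no gap; but note that the paper never states or proves either of them, so your argument outsources the two key conclusions to \cite{BEW2} and \cite{EN}. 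The paper instead builds $A$ by hand: using Lemma~\ref{Lem:af} and Proposition~\ref{Prop:minimal} it produces an amenable, minimal, free action $\gamma$ of a large free group $\mathbb{F}_S$ on $X\times G$ (with $X=\{0,1\}^S$, the $G$-factor carrying left translation through a dense-image homomorphism $\mathbb{F}_S\to G$), sets $A:=C_0(X\times G)\rca{\gamma}\mathbb{F}_S$, and lets $G$ act via the commuting right translation. Simplicity and nuclearity then come from the classical results \cite{AS}, \cite{AD} on crossed products of amenable topologically free minimal actions, and the coincidence of full and reduced crossed products is extracted from the commuting proper $G$-translation rather than from a general amenability-implies-weak-containment theorem. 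What each approach buys: yours is shorter and reuses the Pimsner--Meyer machinery already developed in Section~\ref{Sec:Example}, at the cost of leaning on external theorems; the paper's is self-contained modulo classical facts, realizes $A$ concretely as a commutative crossed product (a transformation-group algebra), and directly upgrades the construction of \cite{Suzeq} by removing second countability, which is the stated purpose of the subsection. Your closing remark is also accurate: for non-exact $G$ the resulting amenable action necessarily lives on a non-unital algebra, and both constructions respect this.
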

In the original article \cite{Suzeq},
we need to assume second countability of $G$.
This is because we have used the Baire category theorem
to take an appropriate amenable action on the Cantor set (cf.~\cite{Suzmin}).
The new construction presented here does not depend on the Baire category theorem.

To construct the desired $G$-\Cs-algebra,
following the strategy in \cite{Suzeq}, we first construct
a suitable amenable action on a compact space.

Let $S$ be an infinite set.
Let $\mathbb{F}_S$ be the free group generated by $S$.
For each $R \subset S$,
let $\Gamma_R$ denote the subgroup of $\mathbb{F}_S$
generated by $R$.
Denote by $\mathfrak{P}_{\rm fin}(S)$ the set of finite subsets of $S$.
\begin{Lem}\label{Lem:af}
There is an amenable free action
$\beta \colon \mathbb{F}_S \curvearrowright \{0, 1\} ^S$.
\end{Lem}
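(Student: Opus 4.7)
The plan is to build $\beta$ explicitly, then verify freeness by a direct combinatorial check and prove amenability as the main technical step. The notation set up just before the lemma---$\mathbb{F}_S = \varinjlim_{R \in \mathfrak{P}_{\rm fin}(S)} \Gamma_R$ as the directed union of finitely generated free subgroups and, correspondingly, $\{0,1\}^S = \varprojlim_R \{0,1\}^R$ as the inverse limit of finite spaces---strongly suggests the construction should respect both filtrations.

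For the construction itself, I would define each $\beta_s$ for $s\in S$ as a self-homeomorphism of $\{0,1\}^S$ so that the family $\{\beta_s\}_{s\in S}$ generates a faithful $\mathbb{F}_S$-action with no fixed points on any non-trivial reduced word. A naive permutation-of-coordinates action would yield a Bernoulli-type action, which is amenable only when $\mathbb{F}_S$ is amenable; a pure coordinate flip produces only involutions. So the definition of $\beta_s$ must combine permutation with controlled flipping, aligned with the tree structure of $\mathbb{F}_S$. Freeness then follows by tracing each non-trivial reduced word on a carefully chosen coordinate or cylinder set.

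The principal obstacle is amenability. I would use the characterization of Anantharaman-Delaroche (Proposition~2.5 of \cite{AD02}) and produce a net $(k_i)_{i\in I} \subset C_c(\mathbb{F}_S, C(\{0,1\}^S))$ of positive type functions converging to $1$ uniformly on compact subsets of $\mathbb{F}_S$. Since each $\Gamma_R$ is a finitely generated free (hence exact and indeed hyperbolic) group, its restricted action on $\{0,1\}^S$ should admit its own amenability certificates; the task is to lift these compatibly along the filtration, mirroring the directed-union strategy in the proof of Proposition~\ref{Prop:exact} and drawing on the permanence properties in Proposition~\ref{Prop:perm}. The genuine difficulty is reconciling freeness with amenability in a single action: the coordinates of $\{0,1\}^S$ must simultaneously witness a non-trivial word-by-word action (for freeness) and support approximately invariant probability measures (for amenability). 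Designing $\beta_s$ to walk this line, and verifying both properties hold, will be the bulk of the proof.
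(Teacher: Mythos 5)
There is a genuine gap: your proposal is a plan rather than a proof. You never actually define the homeomorphisms $\beta_s$ (``permutation with controlled flipping'' is not a construction), and you explicitly defer both the freeness check and the amenability argument (``will be the bulk of the proof''). The central difficulty you correctly identify---exhibiting a single action of a free group that is simultaneously free and amenable---is exactly the point where a concrete input is needed, and your proposal supplies none. In particular, there is no natural way to certify amenability of a hand-built coordinate action of $\mathbb{F}_S$ by positive type functions without already having an amenable action of the free group in hand; the canonical source is the boundary action $\Gamma_F \curvearrowright \partial \Gamma_F$ (amenable but not free), which must then be combined with a free action to produce an amenable \emph{free} action of each finitely generated $\Gamma_F$ on the Cantor set. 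The paper simply quotes this known existence result (Lemma~2.3 of \cite{Suzmin}) for each finitely generated $\Gamma_F$ and does not construct anything by hand.

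The second missing idea is how to pass from the finitely generated pieces to all of $\mathbb{F}_S$ (where $S$ may be uncountable). The paper's device is: for each $F\in\mathfrak{P}_{\rm fin}(S)$ extend the chosen amenable free action $\alpha^F$ of $\Gamma_F$ to an action $\beta^F$ of $\mathbb{F}_S$ by letting the generators outside $F$ act trivially, then form the product action $\beta=\bigotimes_F\beta^F$ on $(\{0,1\}^{\mathbb{N}})^{\mathfrak{P}_{\rm fin}(S)}\cong\{0,1\}^S$. For each $F$, the restriction $\beta|_{\Gamma_F}$ admits $\alpha^F$ as an equivariant factor, and both amenability and freeness pull back along equivariant factor maps; since every group element and every finite subset of $\mathbb{F}_S$ lies in some $\Gamma_F$, both properties hold for $\beta$. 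Your filtration language ($\mathbb{F}_S=\varinjlim\Gamma_R$) points in the right direction, but without the ``extend trivially and take the product over all finite subsets'' trick---and without the cited existence of amenable free actions of finitely generated free groups---the argument cannot be completed as written.
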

\begin{proof}
For each $F\in \mathfrak{P}_{\rm fin}(S)$,
choose an amenable free action
$\alpha^F \colon \Gamma_F \curvearrowright \{0, 1\}^\mathbb{N}$.
(See e.g., Lemma 2.3 of \cite{Suzmin} for the existence of such an action.)
Let $\beta^F \colon \mathbb{F}_S \curvearrowright \{0, 1\}^\mathbb{N}$
be the action defined to be
\[\beta^F_s := \begin{cases}
 \alpha^F_s & s\in F \\
 \id_{ \{0, 1\}^\mathbb{N}} & s\in S \setminus F.\end{cases}\]
Set \[\beta := \bigotimes_{F \in \mathfrak{P}_{\rm fin}(S)} \beta_F \colon \mathbb{F}_S \curvearrowright (\{0, 1\}^\mathbb{N})^{\mathfrak{P}_{\rm fin}(S)}.\]
By standard calculations of cardinals, $| \mathbb{N} \times\mathfrak{P}_{\rm fin}(S)| = |S|$. Hence 
$(\{0, 1\}^\mathbb{N})^{\mathfrak{P}_{\rm fin}(S)} \cong \{0, 1\} ^S$.
For each $F \in \mathfrak{P}_{\rm fin}(S)$,
the restriction $\beta|_{\Gamma_F}$ factors $\alpha_F$.
Therefore $\beta|_{\Gamma_F}$ is amenable and free.
Hence so is $\beta$.
\end{proof}

\begin{Prop}\label{Prop:minimal}
Let $T \subset S$ be a subset satisfying
$|S|=|T|$.
Then there is an amenable free action
$\alpha \colon \mathbb{F}_S \curvearrowright \{0, 1\} ^S$
whose restriction to $\Gamma_T$ is minimal.
\end{Prop}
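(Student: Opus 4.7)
My plan is to refine the product construction of Lemma~\ref{Lem:af} so that, on components indexed by finite subsets of $T$, the building-block actions are minimal and their letter-supports inside $T$ are pairwise disjoint. For each finite $F\subset T$ I fix a minimal amenable free action $\alpha^F\colon\Gamma_F\curvearrowright\{0,1\}^{\mathbb{N}}$, which exists because $\Gamma_F$ is a countable free group. Using that $|T|$ is infinite, I choose pairwise disjoint injections $\phi^{(F,\kappa)}\colon F\hookrightarrow T$ for $(F,\kappa)\in\mathfrak{P}_{\rm fin}(T)\times\mathcal{J}$ with $|\mathcal{J}|=|T|$, and transport $\alpha^F$ along $\phi^{(F,\kappa)}$ to a minimal amenable free action $\alpha^{(F,\kappa)}$ of $\Gamma_{\phi^{(F,\kappa)}(F)}$. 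Each of these is then trivially extended to an $\mathbb{F}_S$-action. To witness freeness for elements $g\in\mathbb{F}_S$ involving $S\setminus T$-letters I adjoin further product factors indexed by $F\in\mathfrak{P}_{\rm fin}(S)\setminus\mathfrak{P}_{\rm fin}(T)$ that carry the amenable free actions of Lemma~\ref{Lem:af}. Taking the full product yields an $\mathbb{F}_S$-action $\alpha$, and a cardinality count identifies the product space with $\{0,1\}^S$.

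Amenability of $\alpha$ is inherited from any single amenable factor, and freeness follows by the Lemma~\ref{Lem:af} argument applied to the $F_g$-indexed components. The heart of the proof is minimality of $\alpha|_{\Gamma_T}$. A basic open cylinder constrains finitely many ``minimality'' components $(F_1,\kappa_1),\dots,(F_k,\kappa_k)$; by minimality of each transported action $\alpha^{(F_i,\kappa_i)}$ I pick $g_i\in\Gamma_{\phi^{(F_i,\kappa_i)}(F_i)}$ moving the $i$-th coordinate of a given point into the required cylinder. Pairwise disjointness of the supports $\phi^{(F_i,\kappa_i)}(F_i)\subset T$ is what makes everything work: the concatenation $t:=g_1 g_2\cdots g_k\in\Gamma_T$ is automatically a reduced word, and its letter-deleting retractions satisfy $r_{\phi^{(F_i,\kappa_i)}(F_i)}(t)=g_i$ for every $i$, so $t\cdot x$ lies in the prescribed cylinder.

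The principal obstacle is reconciling this disjoint-support scheme with the auxiliary freeness factors. On a component indexed by $F$ with $F\cap(S\setminus T)\neq\emptyset$, the $\Gamma_T$-action factors through a retraction to $\Gamma_{F\cap T}$ (trivially when $F\cap T=\emptyset$) and generally fails to be minimal, which threatens minimality of the full $\Gamma_T$-action. I plan to handle this by segregating these auxiliary factors into a sub-product indexed by subsets of $S\setminus T$ alone, on which $\Gamma_T$ acts trivially; using $|S|=|T|$ to invoke the identification $\{0,1\}^S\cong\{0,1\}^T\times\{0,1\}^{S\setminus T}$, I then pass to a $\Gamma_T$-minimal orbit closure in the first factor and combine with the auxiliary action on the second. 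The combinatorial bookkeeping to carry this through while preserving both freeness of $\mathbb{F}_S$ and amenability—and while landing on a space still homeomorphic to $\{0,1\}^S$—is the main technical burden of the proof.
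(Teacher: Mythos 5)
There is a genuine gap, and it sits exactly where you located the ``main technical burden'': your two requirements --- disjoint supports in $T$ for the minimality components, and enough factors to witness freeness --- are in direct conflict, and the proposed repair destroys both freeness and amenability. In your final scheme the only components on which $\Gamma_T$ acts nontrivially are the minimality components, and there $\Gamma_T$ acts through the retractions onto the pairwise disjoint supports $\phi^{(F,\kappa)}(F)\subset T$. Take two letters $t_1,t_2\in T$ lying in two different supports (such letters exist, since the supports are disjoint finite sets and $T$ is infinite). The commutator $g=t_1t_2t_1^{-1}t_2^{-1}$ retracts to $e$ on every support and acts trivially on the auxiliary factors indexed by subsets of $S\setminus T$, so $g$ acts trivially on the whole space: the action is not free, indeed not even faithful on $\Gamma_T$. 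The same mechanism kills amenability: the restriction of your action to $\Gamma_{\{t_1,t_2\}}\cong\mathbb{F}_2$ factors through $\mathbb{Z}^2$, so it has a non-amenable kernel acting trivially and cannot be amenable; hence the $\mathbb{F}_S$-action is not amenable either. (Relatedly, the assertion that ``amenability of $\alpha$ is inherited from any single amenable factor'' is not correct as stated, because on a single component the $\mathbb{F}_S$-action has a huge free kernel; one must argue as in Lemma~\ref{Lem:af}, via restrictions to the finitely generated subgroups $\Gamma_F$, and in your final scheme these restrictions are precisely what fail. Note also that the proposition allows $T=S$, in which case your auxiliary factors are absent altogether.) The fallback of passing to a $\Gamma_T$-minimal orbit closure is not an argument but a restatement of the problem: the closure must in addition be $\mathbb{F}_S$-invariant, freeness and amenability must be re-verified on it, and the space must still be homeomorphic to $\{0,1\}^S$ --- none of which is supplied.

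For comparison, the paper avoids the product-of-minimal-actions issue entirely. It takes the diagonal action $\gamma=\bigotimes_S\beta$ of the free amenable action $\beta$ from Lemma~\ref{Lem:af} on $X=[\{0,1\}^S]^S\cong\{0,1\}^S$, fixes a bijection $\varphi$ from $T$ onto the set of pairs of proper nonempty clopen subsets of $X$, and perturbs each generator $t\in T$ by a homeomorphism $h_t$, supported on finitely many coordinate columns, so that $\alpha_t:=h_t\circ\gamma_t$ maps $\varphi(t)_1$ onto $\varphi(t)_2$. Minimality of $\alpha|_{\Gamma_T}$ is then immediate from total disconnectedness (a single generator carries any prescribed clopen set onto any other), while freeness and amenability survive because for each finitely generated subgroup $\Gamma_F$ the finitely many perturbations leave some coordinate untouched, so the original free amenable action $\beta|_{\Gamma_F}$ persists as a factor. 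If you want to salvage your construction, you would need a device of this ``twist a known free amenable action'' type, rather than a product of minimal pieces, to reconcile minimality with freeness.
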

\begin{proof}
By Lemma \ref{Lem:af},
one has an amenable free action $\beta \colon \mathbb{F}_S \curvearrowright \{0, 1\} ^S$.
Set $\gamma := \bigotimes_S \beta \colon \mathbb{F}_S \curvearrowright [\{0, 1\} ^S]^ S$.
Note that $X:=[\{0, 1\} ^S]^ S \cong \{0, 1\} ^S$.
Let $\mathrm{CO}(X)$ denote the set of clopen subsets of $X$.
Take a bijection $\varphi \colon T \rightarrow [\mathrm{CO}(X) \setminus \{ \emptyset, X\}]^2$.
Denote by $\varphi(t)=(\varphi(t)_1, \varphi(t)_2)$.
For each $t\in T$,
take a homeomorphism $h_t$ on $X$
such that
\begin{itemize}
\item $h_t(\gamma_t(\varphi(t)_1)) = \varphi(t)_2$,
\item $h_t$ is of the form
\[H \times \id_{\{0, 1\}^{(S \times S) \setminus (W \times F)}}\]
for some $F\in \mathfrak{P}_{\rm fin}(S)$, countable subset $W \subset S$,
and some homeomorphism $H$ on $\{0, 1\} ^ {W \times F}$.
Here we identify $X$ with $\{0, 1\} ^ {W \times F}\times \{0, 1\}^{(S \times S) \setminus (W \times F)}$ in the obvious way.
\end{itemize}
Set $h_s := \id_X$ for $s \in S \setminus T$.
Define $\alpha \colon \mathbb{F}_S \curvearrowright X$
to be $\alpha_s := h_s \circ \gamma_s$ for $s\in S$.
Then, for any proper clopen subsets $Y, Z \subset X$,
the element
$t:= \varphi^{-1}(Y, Z)\in T$ satisfies
$\alpha_t(Y)=Z$.
As $X$ is totally disconnected, this shows that
$\alpha|_{\Gamma_T}$ is minimal.
For each $F \in \mathfrak{P}_{\rm fin}(S)$,
$\alpha|_F$ factors $\beta|_F$.
Hence $\alpha$ is amenable and free.
\end{proof}
\begin{proof}[Proof of Theorem \ref{Thm:exotic}]
Take an infinite set $T$ and a homomorphism
$h \colon \mathbb{F}_T \rightarrow G$ with dense image.
Choose a set $S$ with $T \subset S$, $|S|=|S \setminus T|=|T|$.
Extend $h$ to a homomorphism $k \colon \mathbb{F}_S \rightarrow G$
by setting $k_s =e$ for $s\in S \setminus T$.
By Proposition \ref{Prop:minimal},
one can find an amenable free action $\alpha \colon \mathbb{F}_S \curvearrowright \{ 0, 1\}^S=:X$ such that $\alpha|_{\Gamma_{S \setminus T}}$ is minimal.
Define $\gamma \colon \mathbb{F}_S \curvearrowright X \times G$
to be the diagonal action of $\alpha$ and the left translation action via $k$.
Note that $\gamma$ is amenable, minimal, and free.
Let
$\varphi \colon G \curvearrowright X \times G$ be
the diagonal action of the trivial action and the right translation action.
Then $\gamma$ and $\varphi$ commute.
Therefore $\varphi$ induces a $G$-action
on $C_0(X \times G) \rca{\gamma} \mathbb{F}_S$.
This $G$-\Cs-algebra possesses the desired properties (\cite{AD}, \cite{AS}).
\end{proof}
\subsection*{Acknowledgements}
The authors are grateful to Professors Alcides Buss, Siegfried Echterhoff, and Rufus Willett
for communications on the subject of the article.
We are also grateful to Professor Reiji Tomatsu 
for valuable help concerning the material in Section~\ref{Sec:fixed point}.
This joint work was started during the research camp in Tsurui
village in August 2020. The authors are grateful to the kind hospitality of the people of
Tsurui village. The camp was partly supported by Operator Algebra Supporters' Fund.
Finally we would like to thank the referee for careful reading and helpful comments.

The first author was supported by JSPS KAKENHI (No.~20H01806).
The second author was supported by JSPS KAKENHI Early-Career Scientists
(No.~19K14550) and a start-up fund of Hokkaido University.

\end{document}